\documentclass{amsart}
\usepackage{amssymb,amsmath, color}

\usepackage{amssymb}

\topmargin=-15mm
\oddsidemargin=-10mm
\evensidemargin=-10mm
\textwidth=180mm
\textheight=255mm

\newtheorem{Theorem}{Theorem}[section]
\newtheorem{lemma}{Lemma}[section]
\newtheorem{corollaire}{Corollary}[section]

\newtheorem{proposition}{Proposition}[section]

\theoremstyle{remark}
\newtheorem{remark}{Remark}[section]

\newcommand{\R}{{\mathbb R}}
\newcommand{\N}{{\mathbb N}}
\newcommand{\argmin}{{\rm argmin}\kern 0.12em}

\usepackage{ulem}

\newcommand{\dta}{\frac{2}{3}\alpha}
\newcommand{\prox}{{\rm prox}\kern 0.06em}
\newcommand{\cH}{{\mathcal H}}
\newcommand{\rinf}{\mathbb R\cup\{+\infty\}}
\newcommand{\demi}{\frac{1}{2}}

\begin{document}

\title{Rate of convergence  of the Nesterov accelerated gradient method\\ in the subcritical case  $\alpha \leq 3$.}

\author{Hedy Attouch}
\address{Institut Montpelli\'erain A. Grothendieck, UMR CNRS 5149, Universit\'e Montpellier, 
34095 Montpellier cedex 5, France}
\email{hedy.attouch@univ-montp2.fr}

\author{Zaki Chbani}\address{Cadi Ayyad university, Faculty of Sciences Semlalia, Mathematics, 40000 Marrakech, Morroco}
\email{chbaniz@uca.ac.ma}

\author{Hassan Riahi}
\address{Cadi Ayyad university, Faculty of Sciences Semlalia, Mathematics, 40000 Marrakech, Morroco}
\email{h-riahi@uca.ac.ma}

\date{June 18, 2017}

\begin{abstract}
In a Hilbert space setting $\mathcal H$, given $\Phi: \mathcal H \to  \mathbb R$ a convex continuously differentiable function, and $\alpha$ a positive parameter, we consider  the inertial system with Asymptotic Vanishing Damping
\begin{equation*}
 \mbox{(AVD)}_{\alpha} \quad \quad \ddot{x}(t) + \frac{\alpha}{t} \dot{x}(t) + \nabla \Phi (x(t)) =0.
\end{equation*}
 Depending on the value of $ \alpha $ with respect to 3, we give a complete picture of the  convergence properties as $t \to + \infty$ of the trajectories generated by $\mbox{(AVD)}_{\alpha}$, as well as  iterations of the corresponding algorithms.
Indeed, as shown by  Su-Boyd-Cand\`es,  the case $\alpha = 3$ corresponds to a continuous version of the accelerated gradient method of  Nesterov, with the   rate of convergence
$\Phi (x(t))-\min \Phi =   \mathcal O (t^{-2})$ for $\alpha \geq 3$. 
Our main result concerns the subcritical case $\alpha \leq 3$, where we show  that
$\Phi (x(t))-\min \Phi =   \mathcal O (t^{-\frac{2}{3}\alpha})$.  
This overall picture shows a continuous variation of the rate of convergence of the values $\Phi(x(t))-\min_\mathcal H \Phi= \mathcal O (t^{-p(\alpha)})  $ with respect to $\alpha >0$: the coefficient $p(\alpha)$ increases linearly up to $2$ when $\alpha$ goes from $0$ to $3$,  then displays a plateau. Then we examine the convergence of trajectories to optimal solutions. When  $\alpha > 3$, we obtain the weak convergence of the trajectories, and so find the recent  results by May and Attouch-Chbani-Peypouquet-Redont. As a new result, in the one-dimensional framework, for the critical value $\alpha = 3 $, we prove the convergence of the trajectories without any restrictive hypothesis on the convex function $\Phi $. In the second part of this paper, we study  the convergence properties of the associated forward-backward inertial algorithms.
They aim to solve structured convex minimization problems  of the form  $\min \left\lbrace \Theta:= \Phi + \Psi \right\rbrace$, with $\Phi$ smooth and $\Psi$ nonsmooth. 
The continuous dynamics serves as a guideline for this study, and is very useful for suggesting Lyapunov functions.
We obtain a similar  rate of convergence for the sequence of iterates $(x_k)$: for $\alpha \leq 3$ we have $\Theta (x_k)-\min \Theta =   \mathcal O (k^{-p})$  for all $p <\frac{2\alpha}{3}$ , and for $\alpha > 3$ \ $\Theta (x_k)-\min \Theta =   o (k^{-2})$ .
We conclude this study by    showing that the results are robust with respect to external perturbations. 

\bigskip

\paragraph{\textbf{Key words}}  Accelerated gradient method;  FISTA; inertial forward-backward algorithms;  Nesterov method; proximal-based methods; structured convex optimization; subcritical case; vanishing damping.

\vspace{0.2cm}

\paragraph{\textbf{AMS subject classification}} 49M37, 65K05, 90C25.
\end{abstract}

\maketitle

\markboth{H. ATTOUCH, Z. CHBANI, H. RIAHI}
  {SUBCRITICAL ACCELERATED GRADIENT}


\section{Introduction}
Throughout the paper, $\mathcal H$ is a real Hilbert space which is endowed with the scalar product $\langle \cdot,\cdot\rangle$, with $\|x\|^2= \langle x,x\rangle    $ for  $x\in \mathcal H$.
Let $\Phi : \mathcal H \rightarrow \mathbb R$ be a convex differentiable function. In a first part, we consider 
 the second-order differential equation 
\begin{equation}\label{edo01}
 \mbox{(AVD)}_{\alpha} \quad \quad \ddot{x}(t) + \frac{\alpha}{t} \dot{x}(t) + \nabla \Phi (x(t))  =0,
\end{equation}
where $\alpha $ is a positive parameter. Depending on  whether $\alpha$ is greater or less than the critical value $3$, we analyze the  convergence rate of the values $\Phi (x(t))-\min \Phi$, as $t\to + \infty$. The novelty of our results comes mainly from the study of the subcritical case $\alpha < 3$ whose  convergence rate was largely unknown. In a second part, we study  the corresponding inertial forward-backward algorithms in the case of structured minimization problems. The analysis of the continuous dynamics will serve  as a guideline for the study of these algorithms.
Let us first recall some historical facts, explaining the importance of these issues.

\subsection{From the heavy ball to fast optimization}
 The  heavy ball with friction system, which involves a fixed positive damping coefficient  $\gamma$
\begin{equation} \label{E:heavyball}
\ddot x(t)+\gamma\dot{x}(t)+\nabla\Phi (x(t))=0
\end{equation}
was introduced, from an optimization perspective, by Polyak \cite{Polyak2}, \cite{Polyak3}. Its convergence in the convex case was first obtained by \'Alvarez in \cite{Al}. In recent years, several studies  have been devoted to the study of the Inertial Gradient System ${\rm(IGS)_{\gamma}} $, with a time-dependent damping coefficient $\gamma(\cdot)$
\begin{equation} \label{E:IGS}
{\rm(IGS)_{\gamma}} \qquad \ddot x(t)+\gamma(t) \dot{x}(t)+\nabla\Phi (x(t))=0.
\end{equation}
A particularly interesting situation concerns the case $\gamma(t) \to 0$ of a vanishing damping coefficient. As pointed out by Su-Boyd-Cand\`es in \cite{SBC}, the $\rm{(IGS)_{\gamma} }$ system with $\gamma (t) = \frac{\alpha}{t}$, that's ${\mbox{\rm(AVD)}}_{\alpha}  $ given in (\ref{edo01}),
can be seen as a continuous version of the fast gradient
method of Nesterov (see \cite{Nest1,Nest2, Nest4}). Its adaptation to the case of structured "smooth + nonsmooth" convex optimization  gives the Fast Iterative Shrinkage-Thresholding Algorithm (FISTA) of Beck-Teboulle \cite{BT}. When $\alpha \geq 3$, the rate of convergence of these methods is $\Phi(x_k)-\min_\mathcal H \Phi= \mathcal O \left(\frac{1}{k^2}\right)$, where $k$ is the number of iterations.
Convergence of the trajectories generated by (\ref{edo01}), and of the sequences generated by Nesterov's method, has been an elusive question for decades. However, when considering (\ref{edo01}) with $\alpha >3$, it was shown by Attouch-Chbani-Peypouquet-Redont \cite{ACPR} and  May \cite{May},  that each trajectory converges weakly to an optimal solution, with the improved rate of convergence $\Phi(x(t))-\min_\mathcal H \Phi = o (\frac{1}{t^2})$. Corresponding results for the algorithmic case have been obtained by Chambolle-Dossal \cite{CD} and Attouch-Peypouquet \cite{AP}.
  The case of a general time-dependent damping coefficient $\gamma(\cdot)$ has been recently considered by Attouch-Cabot in \cite{AC1,AC2}. The latter includes the corresponding forward-backward algorithms, and unifies previous results. These results are of great importance because they are in some sense optimal: it is well known that, for first-order methods, the rate of convergence $\frac{1}{k^2}$ is the best one can expect in the worst case.
 
 \subsection{The subcritical case $\alpha \leq 3$}
 
Whereas   $\alpha >3$ has been studied in depth, the case $\alpha< 3$ has remained largely unknown. 
Our main contribution is to show that in 
the subcritical case $\alpha \leq 3$, we  still have a property of rapid convergence for the values, that now depends on $\alpha$, namely
$$\Phi (x(t))-\min_{\mathcal H} \Phi =   \mathcal O \left(\displaystyle{\frac{1}{t^{\frac{2\alpha}{3}}}}\right).$$
Of course, in the case $\alpha =3$, from this formula, we recover the well-known convergence rate  $\mathcal O(\frac{1}{t^2})$ of the  accelerated gradient method of Nesterov.
This shows a continuous variation of the  convergence rate for  the values, when $\alpha$ varies along the positive real line. The following table gives a  synthetic view, where $\alpha $ is the coefficient of the damping parameter $  \frac{\alpha}{t}$
entering the definition of  ${\mbox{\rm(AVD)}}_{\alpha}$  (defined in (\ref{edo01})).

\bigskip

\begin{large}
\begin{center}
\begin{tabular}{|c||c|c|c|}
     \hline
&&&\\     
$\alpha$ &  \qquad  $\alpha<3 \qquad$ & \qquad  $\alpha =3  \qquad $ &  \qquad  $\alpha>3  \qquad$ \\ 
&&&\\
\hline
&&&\\  
$\Phi (x(t))-\min_{\mathcal H} \Phi$ & $\mathcal O \left(\frac{1}{t^{\frac{2\alpha}{3}}}\right)$ & $\mathcal O \left(\frac{1}{t^2}\right)$ & $o \left(\frac{1}{t^2}\right)$  \\ 
 &&&\\  
     \hline
  \end{tabular}
 \end{center}
\end{large}

\bigskip

\subsection{From continuous dynamics to algorithms}
For applications, and in order to develop fast numerical splitting methods, it is important to consider "\textit{smooth + nonsmooth}" structured  optimization problems of the form
\begin{equation}\label{algo1-0}
\min \left\lbrace   \Phi (x) + \Psi (x) : \ x \in \mathcal H   \right\rbrace   
\end{equation}
where  $\Phi: \mathcal H \to  \mathbb R$ is a continuously differentiable convex function whose gradient is Lipschitz continuous, and $\Psi: \mathcal H \to  \mathbb R \cup \lbrace   + \infty  \rbrace $ is a proper lower-semicontinuous convex function. We set $\Theta:=\Phi +\Psi$, which is the convex lower-semicontinuous  function to minimize.
To handle such non-smooth optimization problem, we are naturally led to extend $\mbox{(AVD)}_{\alpha}$, and to consider the differential inclusion
\begin{equation}\label{diff-incl-0}
 \mbox{(AVD)}_{\alpha} \quad \quad \ddot{x}(t) + \frac{\alpha}{t} \dot{x}(t) +\nabla \Phi (x(t)) + \partial \Psi (x(t))  \ni 0,
\end{equation}
where $\partial \Psi$ is the subdifferential of $\Psi$. The time discretization of  
this system, implicit with respect to the nonsmooth operator $\partial \Psi$, and explicit with respect to the smooth operator $\nabla \Phi$, gives  
 the Inertial Forward-Backward algorithm
$$\mbox{\rm(IFB)}_{\alpha} \quad\left\{ 
\begin{array}{l}
y_k=x_k+(1 - \frac{\alpha}{k})(x_k-x_{k-1})\\
\rule{0pt}{15pt}
x_{k+1}=\prox_{s\Psi}(y_k-s\nabla \Phi(y_k)).
\end{array}
\right.$$
The study of the continuous dynamics will serve us as a guideline to analyze  this algorithm. Indeed, we will obtain convergence results for $\mbox{\rm(IFB)}_{\alpha}$ which are very similar those of the continuous dynamics $\mbox{(AVD)}_{\alpha}$, and which are summarized in the following table.

\bigskip

\begin{large}
\begin{center}
\begin{tabular}{|c||c|c|c|}
     \hline
&&&\\     
$\alpha$ &  \qquad  $\alpha<3 \qquad$ & \qquad  $\alpha =3  \qquad $ &  \qquad  $\alpha>3  \qquad$ \\ 
&&&\\
\hline
&&&\\  
$\Theta (x_k)-\min_{\mathcal H} \Theta$ & $\quad \mathcal O \left(\frac{1}{k^p}\right)\quad$ $\forall p < \frac{2\alpha}{3}$ & $\mathcal O \left(\frac{1}{k^2}\right)$ & $o \left(\frac{1}{k^2}\right)$  \\ 
 &&&\\  
     \hline
  \end{tabular}
 \end{center}
\end{large}

\medskip

\noindent When $\alpha <3$, in order not to have too complicated proof, we have only proved $\Theta (x_k)-\min_{\mathcal H} \Theta = \mathcal O \left(\frac{1}{k^p}\right)$ for all $p < \frac{2\alpha}{3}$. Given the results obtained in the continuous case, we can reasonably conjecture that
$\Theta (x_k)-\min_{\mathcal H} \Theta = \mathcal O \left(\frac{1}{k^{\frac{2\alpha}{3}}}\right)$.

\subsection{Organization of the paper}
In section \ref{S:continuous}, based on  new Lyapunov functions, we analyze in a unifying way the convergence rate of the values for the trajectories of $\mbox{(AVD)}_{\alpha}$. We also study the decay property of the velocity. The main novelty concerns the subcritical case $\alpha \leq 3$. In section \ref{S:conv trajectories}, we study the convergence of the trajectories. In the one-dimensional framework, we prove the convergence of trajectories in the critical case $\alpha =3$. We  also analyze the   convergence rate in   the case of a strong minimum. Then, in section \ref{algo},  based on the results for the continuous dynamics, we study the convergence  of the corresponding inertial forward-backward algorithms, and obtain very similar results.
Finally, in section \ref{S:pert}, we complete this study by   showing that the results are robust with respect to external perturbations.

\section{Convergence rates of the continuous dynamics}
\label{S:continuous}

Throughout the paper (unless otherwise stated), we assume that $\Phi : \mathcal H \rightarrow \mathbb R$ is convex and differentiable, its gradient   $\nabla \Phi$ is Lipschitz continuous on  bounded sets, and  $S := \mbox{argmin} \Phi \neq \emptyset$.

\medskip

We take for granted the existence and uniqueness of a global solution to the Cauchy problem associated with 
$ \mbox{(AVD)}_{\alpha} $ (defined in \eqref{edo01}). 
 We point out that, given $t_0>0$, for any $x_0 \in \mathcal H,  \ v_0 \in \mathcal H $, the existence of a unique global solution on $[t_0,+\infty[$ for the Cauchy problem with initial condition $x(t_0)=x_0$ and $\dot x(t_0)=v_0$ is guaranteed by the above hypothesis, see \cite[Proposition 6.2.1] {Haraux}.
Starting from $t_0 >0$ comes from the singularity of the damping coefficient $\gamma(t) = \frac{\alpha}{t}$ at zero. Indeed, since we are only concerned about the asymptotic behaviour of the trajectories, we do not really care about the origin of time. If one insists starting from $t_0 =0$, then all the results remain valid  taking $\gamma(t) = \frac{\alpha}{t+1}$.

\medskip

Our main results concerning the continuous system are given in the following theorem.

\begin{Theorem}\label{fast} 
Let $\Phi : \mathcal H \rightarrow \mathbb R$ be a convex continuously differentiable function 
such that  $\argmin \Phi$ is nonempty.
Let $x:[ t_0  ;+\infty [ \rightarrow\mathcal{H} $ be a  classical  global solution  of $ \mbox{(AVD)}_{\alpha} $.

\smallskip

i) Suppose $\alpha > 3$. Then, we have the following result of rapid convergence of the values
$$
 \Phi (x(t))-\min_{\mathcal H} \Phi =   \mathcal O (\displaystyle{\frac{1}{t^2} })  .
$$

\smallskip

ii) Suppose $0 < \alpha \leq 3$. Then the following  rate of convergence of the values is verified
$$
 \Phi (x(t))-\min_{\mathcal H} \Phi =   \mathcal O (\displaystyle{\frac{1}{t^{\frac{2\alpha}{3}}}}).
$$
Precisely
 $$\Phi (x(t))-\min_{\mathcal H} \Phi \leq \frac{C}{\displaystyle{t^{\frac{2\alpha}{3}}}}, \ \mbox{ with } \
C = t_0^{\frac{2\alpha}{3}}\left( \Phi (x(t_0))-\min_{\mathcal H} \Phi + \Vert \dot{x}(t_0)\Vert^{2}  \right) + \frac{\alpha (\alpha + 1)}{3}\mbox{\rm dist}^2 (x(t_0), \argmin \Phi).
$$
\end{Theorem}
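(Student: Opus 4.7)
The strategy is, in each regime, to build a time-dependent Lyapunov functional
$$\mathcal{E}(t) \;=\; t^p\bigl(\Phi(x(t))-\min\Phi\bigr) + \tfrac{1}{2}\|a(t)(x(t)-x^*) + b(t)\dot x(t)\|^2 + \tfrac{c(t)}{2}\|x(t)-x^*\|^2,$$
with $x^*\in\argmin\Phi$ arbitrary but fixed, and to tune the exponent $p$ and the scalar functions $a,b,c$ so that $\dot{\mathcal{E}}(t)\le 0$ along every solution of $(AVD)_\alpha$. Once this is established, $t^p(\Phi(x(t))-\min\Phi)\le \mathcal{E}(t)\le \mathcal{E}(t_0)$ delivers the $\mathcal{O}(t^{-p})$ rate, and bounding $\mathcal{E}(t_0)$ from above yields the explicit constant $C$.

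A direct differentiation, substituting $\ddot x=-\frac{\alpha}{t}\dot x-\nabla\Phi(x)$ and then using convexity in the form $\langle\nabla\Phi(x),x-x^*\rangle\ge\Phi(x)-\min\Phi$, shows that
$$\dot{\mathcal{E}}(t)\;\le\;(pt^{p-1}-ab)W+(t^p-b^2)\langle\nabla\Phi,\dot x\rangle+b\bigl(a+\dot b-\tfrac{\alpha b}{t}\bigr)\|\dot x\|^2+\bigl[a(a+\dot b-\tfrac{\alpha b}{t})+b\dot a+c\bigr]\langle x-x^*,\dot x\rangle+\bigl(a\dot a+\tfrac{\dot c}{2}\bigr)\|x-x^*\|^2,$$
where $W=\Phi(x)-\min\Phi$. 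For part (i), taking $p=2$, $b(t)=t$, $a\equiv\alpha-1$, $c\equiv 0$ makes four of these five coefficients vanish exactly, and leaves $(pt^{p-1}-ab)W=(3-\alpha)tW\le 0$ provided $\alpha\ge3$, which recovers the known $\mathcal{O}(1/t^2)$ bound of Su--Boyd--Cand\`es.

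For part (ii), the idea is that a \emph{larger} exponent $p$ forces $a$, $b$, $c$ to be powers of $t$ rather than constants. The natural ansatz $b(t)=t^{p/2}$, $a(t)=pt^{p/2-1}$ kills the $W$ and $\langle\nabla\Phi,\dot x\rangle$ terms automatically (conditions $ab=pt^{p-1}$ and $b^2=t^p$). The remaining requirement $a+\dot b-\alpha b/t=0$, which suppresses the $\|\dot x\|^2$ term, then reduces to $3p/2=\alpha$ and \textbf{forces} $p=2\alpha/3$. With this choice one is left with $c(t)=-b\dot a=\frac{2\alpha(3-\alpha)}{9}t^{2\alpha/3-2}\ge0$ to cancel the $\langle x-x^*,\dot x\rangle$ coefficient, and a short computation gives the residual
$$a\dot a+\tfrac{\dot c}{2} \;=\; -\tfrac{2\alpha(3-\alpha)(\alpha+3)}{27}\,t^{2\alpha/3-3} \;\le\; 0\qquad\text{for }0<\alpha\le3.$$
Hence $\dot{\mathcal{E}}(t)\le 0$ on $[t_0,+\infty[$.

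The main difficulty is the very construction of this Lyapunov function: the four algebraic requirements above form an overdetermined system that is consistent only at $p=2\alpha/3$, which is precisely why the threshold $2\alpha/3$ emerges and why it coincides with the classical value $2$ when $\alpha=3$. The rest is essentially bookkeeping: bounding $\mathcal{E}(t_0)$ via $\|a(x_0-x^*)+b\dot x_0\|^2\le 2a^2\|x_0-x^*\|^2+2b^2\|\dot x_0\|^2$, evaluating $a^2(t_0)+c(t_0)/2=\tfrac{1}{9}[4\alpha^2+\alpha(3-\alpha)]\,t_0^{2\alpha/3-2}=\tfrac{\alpha(\alpha+1)}{3}t_0^{2\alpha/3-2}$, and taking the infimum over $x^*\in\argmin\Phi$, one obtains exactly the stated constant $C$.
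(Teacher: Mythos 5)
Your proof is correct and takes essentially the same route as the paper: you differentiate the same family of Lyapunov functionals $t^{2p}\left(\Phi-\min\Phi\right)+\frac{1}{2}\Vert \lambda(t)(x-z)+t^{p}\dot x\Vert^{2}+\frac{\xi(t)}{2}\Vert x-z\Vert^{2}$, and in the subcritical case your choices $a(t)=\frac{2\alpha}{3}t^{\alpha/3-1}$, $b(t)=t^{\alpha/3}$, $c(t)=\frac{2\alpha(3-\alpha)}{9}t^{2\alpha/3-2}$ coincide exactly with the paper's $\lambda$, $t^{p}$, $\xi$ at $p=\frac{\alpha}{3}$, with the same residual coefficient $-\frac{2\alpha(3-\alpha)(\alpha+3)}{27}t^{2\alpha/3-3}$ and the same computation of $C$ (like the paper's, your bound actually carries the factor $t_0^{2\alpha/3-2}$ on the distance term, harmless for $t_0\geq 1$). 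The only cosmetic difference is in case (i), where you take $a\equiv\alpha-1$, $c\equiv 0$ instead of the paper's $\lambda=2$, $\xi=2(\alpha-3)$, a valid variant within the same family.
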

\begin{proof}
Fix  $z\in\argmin \Phi$, and consider the energy function $ \mathcal{E}_{\lambda, \xi}^{p}: [t_0, +\infty[ \to \mathbb R^+$
\begin{equation} \label{E:basic-Lyap}
\mathcal{E}_{\lambda, \xi}^{p}(t)=t^{2p}\left[ \Phi(x(t))-\min_{\mathcal H} \Phi\right] + \frac{1}{2}\Vert \lambda(t)(x(t)-z)+t^p \dot{x}(t)\Vert^{2}+\frac{\xi(t)}{2}\Vert x(t)-z\Vert^2 ,
\end{equation}
that will serve as a Lyapunov function. In the definition of $\mathcal{E}_{\lambda, \xi}^{p}$, $p$ is a  positive real number, and $\lambda(\cdot)$,  $\xi(\cdot )$ are  positive functions. They will be appropriately chosen during the proof, so as to make $\mathcal{E}_{\lambda, \xi}^{p}(\cdot)$ a nonincreasing function.  
Let us derivate $\mathcal{E}_{\lambda}^{p}(\cdot)$ by means of the classical derivation chain rule.  By relying on  the convexity of $\Phi(\cdot)$, and using (\ref{edo01}), we obtain, after simplification,
\begin{equation} \label{E:basic-ineq}
\begin{array}{lll}
\frac{d}{dt}\mathcal{E}_{\lambda, \xi}^{p}(t)&\leq &t^p \left[ 2pt^{p-1}-\lambda(t))\right] \left(  \Phi(x(t))-\min_{\mathcal H} \Phi\right)  \\
&  +& \left[ \xi (t)+t^{p}\dot{\lambda}(t)-(\alpha -p)t^{p-1}\lambda(t)+\lambda(t)^{2}\right]\langle x(t)-z, \dot{x}(t)\rangle  \\
&-& t^{p}\left[ (\alpha -p)t^{p-1}-\lambda(t)\right] \Vert \dot{x}(t)\Vert^{2} + \left[ \lambda(t)\dot{\lambda}(t)+\frac{\dot{\xi}(t)}{2}\right] \Vert x(t)-z\Vert^{2}. 
\end{array}
\end{equation}
Let us successively examine the differents terms entering the second member of  \eqref{E:basic-ineq}:

\medskip

Let us  make the  first two terms equal to zero by taking respectively

\medskip

 $ \quad (\textbf{H}_1)$: \  $\lambda(t)=2pt^{p-1}$ 

\medskip

\indent $\quad (\textbf{H}_2)$: \ $\xi (t)+t^{p}\dot{\lambda}(t)-(\alpha -p)t^{p-1}\lambda(t)+\lambda(t)^{2} =0$.

\medskip

From $(\textbf{H}_1)$ this implies 

\smallskip

$\quad \xi(t)=2(\alpha -4p+1)pt^{2(p-1)}.$

\medskip

 To ensure that $\mathcal{E}_{\lambda, \xi}^{p}$ is nonnegative, we must impose $\xi(\cdot)\geq 0$. This is equivalent to assuming that

\smallskip

$\quad (\textbf{H}_3 )$ \ $\alpha\geq 4p-1$.

\smallskip

 Recall that we want $\mathcal{E}_{\lambda, \xi}^{p}(\cdot)$ to be  a nonincreasing function. Hence, we impose on the third term of the second member of \eqref{E:basic-ineq} to satisfy $(\alpha -p)t^{p-1}-\lambda(t)\geq 0$. 
This is equivalent to assuming that

\smallskip

$ \quad (\textbf{H}_4 )$ \ $ \alpha \geq 3p$.

\smallskip

With the above choice of the parameters $\lambda(\cdot), \xi(\cdot), p$, we obtain
\begin{equation}\label{beta}
\frac{d}{dt}\mathcal{E}_{\lambda, \xi}^{p}(t)\leq \beta(t)\Vert x(t)-z\Vert^{2},
\end{equation}
where $\beta(t)= \lambda(t)\dot{\lambda}(t)+\dfrac{\dot{\xi}(t)}{2}$. A straightforward calculation gives
$\beta(t)=-2p(1-p)(\alpha-2p+1)t^{2p-3}$.  We want $\beta(\cdot)$ to be less than or equal to zero. Since  
 $\alpha\geq2p-1$ is implied
  by $(\textbf{H}_4 )$ (or $(\textbf{H}_3 )$), we  impose  the  supplementary condition:

\medskip

\indent  $\quad (\textbf{H}_5 )$ \ $1\geq p$.

\smallskip

Let us put together the  results above.
By taking $p=\min (1,\frac{\alpha}{3}, \frac{\alpha+1}{4})$, and $\lambda(\cdot)$, $\xi(\cdot)$ given respectively by  $(\textbf{H}_1)$ and $(\textbf{H}_2)$, all the conditions $(\textbf{H}_1 )$ to $(\textbf{H}_5 )$ are verified.
As a consequence,  $\frac{d}{dt}\mathcal{E}_{\lambda,\xi}^{p}(t)\leq 0$,  and   $\mathcal{E}_{\lambda,\xi}^{p}$ is nonincreasing. Coming back to the definition of $\mathcal{E}_{\lambda}^{p}(\cdot)$, taking account that $\xi (\cdot)$ is nonegative, we deduce that, for all $t\geq t_0$
\begin{equation}\label{E:est0}
 \Phi (x(t))-\min_{\mathcal H} \Phi \leq \frac{\mathcal{E}_{\lambda,\xi}^{p}(t_0)}{t^{2p}}  .
\end{equation}
A closer look at the formula $p=\min (1,\frac{\alpha}{3}, \frac{\alpha+1}{4})$ shows that this expression can be simplified as 
$p= \min (1,\frac{\alpha}{3})$. This leads us to distinguish the two cases:

\smallskip

$\bullet$ $\alpha \geq 3.$ Then $p=\min (1,\frac{\alpha}{3})=1$, which, from \eqref{E:est0}, gives  item $(i)$ of Theorem \ref{fast}.

\medskip

$\bullet$ $\alpha \leq 3.$ Then $p=\min (1,\frac{\alpha}{3})= \frac{\alpha}{3}$, which, from \eqref{E:est0}, gives item $(ii)$.
Let us  make precise the value of the constant that appears in the corresponding  estimation
\begin{equation}\label{E:est1}
 \Phi (x(t))-\min_{\mathcal H} \Phi \leq \frac{C}{t^{\frac{2\alpha}{3}}} .
\end{equation}
 By definition \eqref{E:basic-Lyap} of  $\mathcal{E}_{\lambda,\xi}^{p}(t_0)$ and  elementary computation we obtain as value of the constant $C$
 \begin{equation}\label{E:est2}
 C = t_0^{\frac{2\alpha}{3}}\left( \Phi (x(t_0))-\min_{\mathcal H} \Phi + \Vert \dot{x}(t_0)\Vert^{2}  \right) + \frac{\alpha (\alpha + 1)}{3}\mbox{\rm dist}^2 (x(t_0), \argmin \Phi),
\end{equation}
which completes the proof of Theorem \ref{fast}.
\end{proof}
\begin{remark}
The proof of the Theorem \ref{fast} is based on the use of the Lyapunov function $\mathcal{E}_{\lambda, \xi}^{p}$, which, with the specific choices of the parameters 
$p= \min (1,\frac{\alpha}{3})$, $\lambda(t)=2pt^{p-1}$ , $\xi(t)=2(\alpha -4p+1)pt^{2(p-1)}$,  is written as follows
\begin{align*} 
\mathcal{E}_{\lambda, \xi}^p(t)=t^{2\min (1,\frac{\alpha}{3})}\left[ \Phi(x(t))-\min_{\mathcal H} \Phi\right] &+ \frac{1}{2}\Vert 2\min (1,\frac{\alpha}{3})t^{\min (1,\frac{\alpha}{3})-1}(x(t)-z)+t^{\min (1,\frac{\alpha}{3})} \dot{x}(t)\Vert^{2}\\
&+(\alpha -4\min (1,\frac{\alpha}{3})+1)\min (1,\frac{\alpha}{3})t^{2(\min (1,\frac{\alpha}{3})-1)}\Vert x(t)-z\Vert^2 .
\end{align*}
Using this Lyapunov function, we were able to provide a unified proof of the 
convergence results in the two different cases $\alpha \leq 3$, and $\alpha \geq 3$.
Obviously, the complexity of this formula explains why historically these two cases have been considered independently.
In each of these cases, the formula simplifies significantly:

\smallskip

$i)$ In the case $\alpha \geq 3$, we have
$p=1$, $\lambda (t)= 2$, $\xi(t) = 2(\alpha -3)$, which gives
$$
\mathcal{E}_{\lambda, \xi}^1(t)= t^2 \left[ \Phi(x(t))-\min_{\mathcal H} \Phi\right] + \frac{1}{2}\Vert 2(x(t)-z)+t \dot{x}(t)\Vert^{2}+ (\alpha -3)\Vert x(t)-z\Vert^2.
$$
This is precisely the Lyapunov function used in the proof of Theorem 2.14 in \cite{ACPR}.

\smallskip

$ii)$  In the case $\alpha \leq 3$, we have
$p=\frac{\alpha}{3}$, $\lambda(t)=\frac{2\alpha}{3}t^{\frac{\alpha}{3}-1}$ , $\xi(t)=\frac{2\alpha}{3}(\alpha -\frac{4\alpha}{3}+1)t^{2(\frac{\alpha}{3}-1)}$, which gives
$$
\mathcal{E}_{\lambda, \xi}^{\frac{\alpha}{3}}(t)= t^{\frac{2\alpha}{3}} \left[ \Phi(x(t))-\min_{\mathcal H} \Phi\right] + \frac{1}{2}\Vert \frac{2\alpha}{3}t^{\frac{\alpha}{3}-1}(x(t)-z)+ t^{\frac{\alpha}{3}} \dot{x}(t)\Vert^{2}+ \frac{\alpha}{3}(\alpha -\frac{4\alpha}{3}+1)t^{2(\frac{\alpha}{3} -1)}\Vert x(t)-z\Vert^2.
$$
Note that it is rather difficult to postulate a priori these formulas:  it was by an identification technique that we could find the appropriate parameters.
\end{remark}

\begin{remark}
Let us give  a synthetic view of the rate of convergence of the values for the trajectories of $ \mbox{(AVD)}_{\alpha} $. Given $x(\cdot)$ a global solution trajectory  we have
$$
\Phi (x(t))-\min_{\mathcal H} \Phi = \mathcal O \left(\frac{1}{t^{p(\alpha)}}\right) \quad \mbox{with}\quad p(\alpha) = \min\left( \frac{2\alpha}{3} ,2\right).
$$
 
This formula and the figure below show the two distinct regimes. On the interval $]0, 3]$ the exponent $p(\alpha)$ increases linearly from zero to $2$. Then, after $\alpha =3$ there is a plateau,  the exponent $p(\alpha)$ remains constant equal to $2$. Of course, this is consistent with the Nesterov complexity bound \cite{Nest1}, \cite{Nest2}, which tells us that for first-order methods, the rate of convergence $\frac{1}{k^2}$ is the best one can expect in the worst case. See Drori-Teboulle \cite{DT} and  Kim-Fessler \cite{Kim-F} for a recent account on first-order  methods that achieve best performance.
\end{remark}
\begin{Large}
\setlength{\unitlength}{12cm}
\begin{picture}(0.4,0.6)(-0.4, 0.13)
\put(0.65,0.26){$\alpha$}
\put(0.03,0.26){0}
\put(0.332,0.26){3}
\put(0.03,0.425){2}
\put(0.45,0.46){$p(\alpha)$}
\multiput(0.065,0.439)(0.02,0){14}
{\line(1,0){0.01}}
\multiput(0.34,0.3)(0,0.021){7}
{\line(0,1){0.01}}
\put(-0.01,0.3){\vector(1,0){0.70}}
\put(0.063,0.25){\vector(0,1){0.32}}

\put(0.337,0.44){\line(1,0){0.32}}
\put(0.063,0.3){\line(2,1){0.275}}
\end{picture} 
\end{Large}

Let us complement the pointwise estimates concerning 
$\Phi(x(t))-\min \Phi$ given in Theorem \ref{fast} by an integral estimate.

\begin{Theorem}\label{integral-decay}
Let $\Phi : \mathcal H \rightarrow \mathbb R$ be a convex continuously differentiable function 
such that  $\argmin \Phi$ is nonempty.
Let $x:[ t_0  ;+\infty [ \rightarrow\mathcal{H} $ be a  classical  global solution  of $ \mbox{(AVD)}_{\alpha} $. Then, for any $p$ satisfying $p\leq 1$ and $p<\frac{\alpha}{3}$, the following inequality is satisfied
$$\int_{t_0 }^{+\infty}t^{2p-1}(\Phi(x(t))-\min \Phi )dt\leq \frac{\mathcal{E}_{\lambda, \xi}^{p}(t_0 )}{(\alpha-3p)},$$
where $\lambda (t)= (\alpha -p)t^{p-1}$ and $\xi (t)=(1-p)(\alpha -p)t^{2(p-1)}$.
Thus, 

\medskip

i) If $\alpha >3$, then,   $\displaystyle{\int_{t_0 }^{+\infty}t(\Phi(x(t))-\min \Phi )dt < +\infty}$.

\medskip

ii) If $\alpha \leq 3$, then for any $p < \frac{2\alpha}{3} -1$, \  $\displaystyle{\int_{t_0 }^{+\infty}t^p(\Phi(x(t))-\min \Phi )dt < +\infty.}$

\smallskip

In particular for $\alpha = 3$, we have $\displaystyle{\int_{t_0 }^{+\infty}t^p(\Phi(x(t))-\min \Phi )dt < +\infty}$ for all $p <1 $.
\end{Theorem}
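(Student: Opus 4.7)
The plan is to reuse the family of Lyapunov functions $\mathcal{E}_{\lambda,\xi}^p$ already introduced in the proof of Theorem~\ref{fast}, and in particular the master inequality \eqref{E:basic-ineq}, but with a different tuning of the parameters. In Theorem~\ref{fast} the choice $(\textbf{H}_1)$--$(\textbf{H}_2)$ was designed to annihilate the coefficient of $\Phi(x(t))-\min \Phi$ while keeping a strictly negative weight in front of $\|\dot x(t)\|^2$. Here I reverse the roles: I take $\lambda(t)=(\alpha-p)t^{p-1}$, so that the coefficient $t^p[(\alpha-p)t^{p-1}-\lambda(t)]$ of $\|\dot x(t)\|^2$ vanishes identically, and then force the cross-term coefficient $\xi(t)+t^p\dot\lambda(t)-(\alpha-p)t^{p-1}\lambda(t)+\lambda(t)^2$ to vanish by taking $\xi(t)=(1-p)(\alpha-p)t^{2(p-1)}$.

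With these choices, I would substitute into \eqref{E:basic-ineq} and verify three points: (a) the two cancellations above, which eliminate the $\|\dot x\|^2$ and $\langle x-z,\dot x\rangle$ contributions; (b) the residual coefficient of $\|x(t)-z\|^2$ equals $\lambda\dot\lambda+\dot\xi/2 = -(1-p)(\alpha-p)(\alpha-2p+1)t^{2p-3}$, which is nonpositive whenever $p\leq 1$ and $0<p<\alpha/3$ (the latter also guaranteeing $\alpha-p>0$, hence $\xi\geq 0$ and $\mathcal{E}_{\lambda,\xi}^p\geq 0$); (c) the remaining coefficient of $\Phi(x(t))-\min_{\mathcal H}\Phi$ is $t^p[2pt^{p-1}-(\alpha-p)t^{p-1}] = -(\alpha-3p)t^{2p-1}$, which is strictly negative by the hypothesis $p<\alpha/3$. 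Combining (a)--(c) yields
\begin{equation*}
\frac{d}{dt}\mathcal{E}_{\lambda,\xi}^p(t) \leq -(\alpha-3p)\, t^{2p-1}\bigl(\Phi(x(t))-\min_{\mathcal H}\Phi\bigr).
\end{equation*}

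Integrating this differential inequality over $[t_0,T]$, exploiting $\mathcal{E}_{\lambda,\xi}^p(T)\geq 0$, and dividing by $\alpha-3p>0$, I obtain $\int_{t_0}^{T} t^{2p-1}(\Phi(x(t))-\min \Phi)\,dt \leq \mathcal{E}_{\lambda,\xi}^p(t_0)/(\alpha-3p)$; letting $T\to+\infty$ gives the stated bound. Parts (i) and (ii) then follow by specialization. For $\alpha>3$ the choice $p=1$ is admissible (it satisfies $p\leq 1$ and $p<\alpha/3$) and produces the weight $t$; for $\alpha\leq 3$, any $p<\alpha/3$ is automatically $\leq 1$ since $\alpha/3\leq 1$, and setting $q=2p-1$ shows that $\int t^q(\Phi-\min\Phi)\,dt<+\infty$ for every $q<\tfrac{2\alpha}{3}-1$, the critical case $\alpha=3$ then yielding every $q<1$. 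The only nontrivial work is the symbolic verification of (a)--(c) for this specific pair $(\lambda,\xi)$; the conceptual point is that swapping which term in \eqref{E:basic-ineq} is annihilated converts the pointwise decay of Theorem~\ref{fast} into an integral decay estimate.
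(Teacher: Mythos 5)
Your proposal is correct and follows essentially the same route as the paper's own proof: you use the same Lyapunov function $\mathcal{E}_{\lambda,\xi}^{p}$ with the identical parameter choices $\lambda(t)=(\alpha-p)t^{p-1}$ and $\xi(t)=(1-p)(\alpha-p)t^{2(p-1)}$, arrive at the same differential inequality $\frac{d}{dt}\mathcal{E}_{\lambda,\xi}^{p}(t)+(\alpha-3p)t^{2p-1}\left(\Phi(x(t))-\min\Phi\right)\leq 0$, and integrate it exactly as the authors do. Your framing of ``swapping which term of \eqref{E:basic-ineq} is annihilated'' is precisely the modification the paper makes when it imposes $(\alpha-p)t^{p-1}-\lambda(t)=0$ in place of cancelling the coefficient of the values term.
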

\begin{proof}
Let us slightly modify the  choices concerning the assumptions $(\textbf{H}_1 )$ to $(\textbf{H}_5 )$ that have been  made in the proof of Theorem \ref{fast}. Still we want to maintain the property $\frac{d}{dt}\mathcal{E}_{\lambda,\xi}^{p}(t)\leq 0$.
Let us assume 
\begin{enumerate}
\item $ \lambda(t)\geq 2pt^{p-1}$

\medskip

\item $ (\alpha -p)t^{p-1}-\lambda(t) = 0$.
\end{enumerate}
Clearly, these two conditions are equivalent to  
$$
 \lambda(t) = (\alpha -p)t^{p-1} \ \mbox{ with } \ \alpha \geq 3p .
 $$
On the other hand, condition $(\textbf{H}_2 )$ becomes 
$$\xi(t)=-t^p \dot{\lambda}(t)=(1-p)(\alpha -p)t^{2(p-1)}$$
 which gives
$$\beta(t) =\lambda(t)\dot{\lambda}(t)+\dfrac{\dot{\xi}(t)}{2}=-(1-p)(\alpha -p)(\alpha-2p+1)t^{2p-3}.$$
Altogether taking $p\leq \min(1,\frac{\alpha}{3})$ ensures that  $\xi(\cdot)\geq 0$, $\beta(\cdot )\leq 0$, and
$$\frac{d}{dt}\mathcal{E}_{\lambda, \xi}^{p}(t)+ \left[ (\alpha -3p)t^{2p-1}\right] \left(  \Phi(x(t))-\min \Phi\right) \leq 0.$$
Then we integrate, and  use  $\mathcal{E}_{\lambda, \xi}^{p}(t)\geq 0$, $\alpha -3p>0$ to conclude.
\end{proof}
\begin{remark}
In the case of the Nesterov accelerated gradient method, that is $\alpha =3$, the estimation 
$$\forall \epsilon >0 \qquad \int_{t_0 }^{+\infty}t^{1-\epsilon}(\Phi(x(t))-\min \Phi )dt < +\infty$$
 is new, to our knowledge. This formula is known to be true with $\epsilon =0$ in the case $\alpha >3$, see for example \cite{ACPR}, in which case it is the key ingredient of the proof of weak convergence of the trajectories of $ \mbox{(AVD)}_{\alpha} $.
\end{remark}

Let us supplement the asymptotic analysis by examining the rate of decay of the speed. First, we establish integral estimates, and then we build on these results  to obtain sharp pointwise estimates.

\begin{Theorem}\label{speed-decay} (\textit{integral estimates of the speed}) \
 Let $x:[ t_0  ;+\infty [ \rightarrow\mathcal{H} $ be a solution of $ \mbox{(AVD)}_{\alpha} $.
 The following integral estimates are satisfied:
 \begin{enumerate} 
 \item If $\alpha > 3$, then
 $$
\int_{t_0 }^{+\infty} t \Vert\dot{x}(t)\Vert^2 dt < + \infty. 
$$
\item If $\alpha \leq 3$, then
$$
\int_{t_0 }^{+\infty} t^p \Vert\dot{x}(t)\Vert^2 dt < + \infty  \quad \mbox{for all } \  p < \alpha -2. 
$$
In particular for $\alpha =3$ we have 
$$
\int_{t_0 }^{+\infty} t^{1-\epsilon} \Vert\dot{x}(t)\Vert^2 dt < + \infty  \quad \mbox{for all } \  \epsilon >0 . 
$$
 \end{enumerate}
\end{Theorem}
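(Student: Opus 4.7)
The strategy is to revisit the Lyapunov calculation in the proof of Theorem \ref{fast} and exploit the $\|\dot{x}\|^{2}$-dissipation term that was previously used only to enforce the correct sign. Keeping the same choices $\lambda(t)=2pt^{p-1}$ and $\xi(t)=2(\alpha-4p+1)pt^{2(p-1)}$, the first two lines of the right-hand side of \eqref{E:basic-ineq} vanish identically; the coefficient of $\|\dot{x}(t)\|^{2}$ equals exactly $-(\alpha-3p)t^{2p-1}$, while $\beta(t)=-2p(1-p)(\alpha-2p+1)t^{2p-3}\le 0$ under $0<p\le 1$ and $\alpha\ge 2p-1$. Rather than discarding the $\|\dot{x}\|^{2}$-term once monotonicity is obtained, I retain it, so that
$$\frac{d}{dt}\mathcal{E}_{\lambda,\xi}^{p}(t)+(\alpha-3p)\,t^{2p-1}\|\dot{x}(t)\|^{2}\;\le\; 0.$$
Integrating on $[t_{0},+\infty[$ and using $\mathcal{E}_{\lambda,\xi}^{p}\ge 0$ (ensured by $\alpha\ge 4p-1$) yields, for every admissible $p\in(0,1]$ with $\alpha>3p$,
$$(\alpha-3p)\int_{t_{0}}^{+\infty}t^{2p-1}\|\dot{x}(t)\|^{2}\,dt\;\le\;\mathcal{E}_{\lambda,\xi}^{p}(t_{0})\;<\;+\infty.$$

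For item (1), the supercritical range $\alpha>3$ permits the choice $p=1$, which satisfies all of $(\textbf{H}_{1})$--$(\textbf{H}_{5})$, and the estimate above reduces to $\int_{t_{0}}^{+\infty} t\,\|\dot{x}(t)\|^{2}\,dt<+\infty$, proving (1).

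For item (2), in the subcritical range $\alpha\le 3$, given a target exponent $p<\alpha-2$, I would use the elementary inequality $\alpha-2\le\frac{2\alpha}{3}-1$ (valid precisely for $\alpha\le 3$, with equality iff $\alpha=3$) to interpose $p<q<\frac{2\alpha}{3}-1$, and then apply the estimate above with $p_{0}:=(q+1)/2\in(0,\alpha/3)$. All admissibility conditions $(\textbf{H}_{1})$--$(\textbf{H}_{5})$ are met for such $p_{0}$, and the resulting bound $\int t^{q}\|\dot{x}\|^{2}\,dt<+\infty$ implies the desired $\int t^{p}\|\dot{x}\|^{2}\,dt<+\infty$ since $t^{p-q}$ is bounded on $[t_{0},+\infty[$. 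The announced special case $\alpha=3$ is the limiting instance $p<1=\alpha-2$, recovering the integrability $\int t^{1-\varepsilon}\|\dot{x}\|^{2}\,dt<+\infty$ for every $\varepsilon>0$.

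The main obstacle is conceptual rather than technical: one must recognise that the Lyapunov function built to prove Theorem \ref{fast} already \emph{contains} a natural dissipation term $\sim t^{2p-1}\|\dot{x}\|^{2}$ that was thrown away once monotonicity was established; once extracted, the integral estimates come essentially for free. The only mild bookkeeping concerns the subcritical case, where the critical choice $p=\alpha/3$ makes the prefactor $\alpha-3p$ degenerate, forcing the $\varepsilon$-shift $p_{0}=\alpha/3-\delta$ described above.
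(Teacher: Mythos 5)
Your proof is correct. For item (1) it coincides with the paper's own argument: keep $\lambda(t)=2pt^{p-1}$, $\xi(t)=2(\alpha-4p+1)pt^{2(p-1)}$, take $p=1$, retain the dissipation term $(\alpha-3p)t^{2p-1}\Vert\dot{x}(t)\Vert^2$ in \eqref{E:basic-ineq} and integrate. For item (2), however, you take a genuinely different parameter route, and in fact a sharper one. The paper relaxes $(\textbf{H}_1)$ to $\lambda(t)=\mu t^{p-1}$ with $2p\leq\mu\leq\alpha+1-2p$, and then fixes $\mu=\frac{\alpha+1}{2}$, the midpoint of that interval; the dissipation coefficient is $(\alpha-p-\mu)t^{2p-1}$, whose positivity then forces $p<\frac{\alpha-1}{2}$, yielding exponents $2p-1<\alpha-2$, exactly the stated range (and effectively only for $1<\alpha\leq 3$; for $\alpha\leq 1$ the range $p<\alpha-2\leq -1$ anyway follows trivially from the energy identity $\dot{W}(t)=-\frac{\alpha}{t}\Vert\dot{x}(t)\Vert^2$, which gives $\int t^{-1}\Vert\dot{x}\Vert^2\,dt<+\infty$). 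You instead stay at the endpoint $\mu=2p$, i.e.\ the original $(\textbf{H}_1)$, and simply take $p_0$ strictly below $\frac{\alpha}{3}$, so that the entire dissipation budget is spent on the speed term: the coefficient $\alpha-3p_0$ remains positive, and for $\alpha\leq 3$ the remaining conditions hold automatically, since $4p_0-1<\frac{4\alpha}{3}-1\leq\alpha$ gives $(\textbf{H}_3)$ and $p_0<\frac{\alpha}{3}\leq 1$ gives $(\textbf{H}_5)$. This delivers $\int_{t_0}^{+\infty}t^{q}\Vert\dot{x}(t)\Vert^2\,dt<+\infty$ for every $q<\frac{2\alpha}{3}-1$, a range that strictly contains the theorem's range $q<\alpha-2$ when $\alpha<3$ (indeed $\frac{2\alpha}{3}-1-(\alpha-2)=1-\frac{\alpha}{3}\geq 0$) and coincides with it at $\alpha=3$; your interposition $p<q<\frac{2\alpha}{3}-1$ with $p_0=\frac{q+1}{2}$ then recovers the stated claim, modulo the harmless remark that one should also pick $q>-1$ so that $p_0>0$ (always possible since $\frac{2\alpha}{3}-1>-1$, with exponents $p\leq -1$ covered by the energy identity above). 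In short, your allocation of the dissipation matches, for the speed, the threshold $\frac{2\alpha}{3}-1$ that the paper obtains for the values in Theorem \ref{integral-decay}, so your argument is both slightly simpler in bookkeeping and strictly stronger in conclusion than the paper's midpoint choice; it even improves, downstream, the admissible range in the pointwise estimate of Theorem \ref{speed-decay-pointwise}, item (3).
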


\begin{proof}
(1) Let us keep the same choice of the parameters as that done in the proof of  Theorem \ref{fast}, that is     $\lambda(t)=2pt^{p-1}$,
 $ \xi(t)=2(\alpha -4p+1)pt^{2(p-1)}$, and  $p=\min (1,\frac{\alpha}{3})$.
As a consequence $\mathcal{E}_{\lambda, \xi}^{p}(\cdot)$ is nonincreasing, and from \eqref{E:basic-ineq} we have
$$
\frac{d}{dt}\mathcal{E}_{\lambda, \xi}^{p}(t) + t^{p}\left[ (\alpha -p)t^{p-1}-\lambda(t)\right] \Vert \dot{x}(t)\Vert^{2} \leq 0.
$$
  for all $t\geq t_0$. 
Equivalently
\begin{equation} \label{energy10}
\frac{d}{dt}\mathcal{E}_{\lambda, \xi}^{p}(t)+  (\alpha -3p)t^{2p-1}\Vert\dot{x}(t)\Vert^2 \leq 0.
\end{equation}
This inequality gives us information about the rate of decay of the energy function $\mathcal{E}_{\lambda, \xi}^{p}$ only when $\alpha > 3p= \min (\alpha, 3)$.
Obviously, this later condition is equivalent to $\alpha >3$, which gives $p=1$, and $2p-1=1$. By integrating (\ref{energy10}) on $[t_0 , t]$ we obtain
$$\mathcal{E}_{\lambda, \xi}^{p}(t)+ (\alpha-3)\int_{t_0 }^t s \Vert\dot{x}(s)\Vert^2 ds \leq \mathcal{E}_{\lambda, \xi}^{p}(t_0).$$
Thus, in the case $\alpha >3$ we recover the well-known estimate (see for example \cite{ACPR}, \cite{AP})
$$
\int_{t_0 }^{+\infty} t \Vert\dot{x}(t)\Vert^2 dt < + \infty. 
$$
$(2)$ Let us now assume $\alpha \leq 3$. 
Let us slightly modify the  choices concerning the hypotheses $(\textbf{H}_1 )$ to $(\textbf{H}_5 )$ which have been  made in the proof of the Theorem \ref{fast}. We always want to maintain the property $\frac{d}{dt}\mathcal{E}_{\lambda,\xi}^{p}(t)\leq 0$, and $\xi(\cdot) \geq 0$.
Let us modify the condition $(\textbf{H}_1)$ as follows: take
$
\lambda (t) = \mu t^{p-1},
$
with $\mu >0$ and

\smallskip

$ (i) \ \mu \geq 2p ,$

\smallskip

\noindent so as to have $t^p \left[ 2pt^{p-1}-\lambda(t))\right]\leq 0$.  Then, condition $(H_2)$ yields
$
\xi(t)= \mu t^{2p-2}(\alpha +1-2p -\mu).
$
Having $\xi(\cdot) \geq 0$ gives 

\medskip

$ (ii) \ \mu \leq \alpha +1-2p  .$

\smallskip

\noindent We  now have $\beta(t)= \lambda(t)\dot{\lambda}(t)+\dfrac{\dot{\xi}(t)}{2}=-\mu(1-p)(\alpha+1-2p)t^{2p-3}$. 
 We want $\beta(\cdot)$ to be less or equal than zero. By $(ii)$ we already have
 $ \alpha +1-2p \geq  \mu >0$. Thus 
  we only need to impose  the  supplementary condition on $p$:

\medskip

 $\quad (iii)$ \ $ p \leq 1$.
 
 \smallskip

\noindent With the above choices $(i),  (ii), (iii)$
inequality  \eqref{E:basic-ineq} becomes
\begin{equation*}
\frac{d}{dt}\mathcal{E}_{\lambda, \xi}^{p}(t) 
+  t^{p}\left[ (\alpha -p)t^{p-1}-\lambda(t)\right] \Vert \dot{x}(t)\Vert^{2}  \leq 0.
\end{equation*}
Since $\lambda (t) = \mu t^{p-1},$ we have equivalently
\begin{equation} \label{E:basic-ineq-c}
\frac{d}{dt}\mathcal{E}_{\lambda, \xi}^{p}(t) 
+  \left[ \alpha -p -\mu \right] t^{2p-1}\Vert \dot{x}(t)\Vert^{2}  \leq 0.
\end{equation}
Hence, assuming

\medskip

 $\quad (iv)$ \ $ \alpha -p -\mu >0$,
 
 \smallskip
 
\noindent by integration of \eqref{E:basic-ineq-c} we obtain
 \begin{equation} \label{E:basic-ineq-d}
\int_{t_0 }^{+\infty} t^{2p-1} \Vert\dot{x}(t)\Vert^2 dt < + \infty. 
\end{equation}
Let us now examine for which values of $p$ the  conditions $(i)$ to $(iv)$ are compatible.
Firstly, $(i)$ and $(ii)$ give $2p \leq \mu  \leq \alpha +1 -2p$. To ensure that these inequalities are compatible leads us to make the hypothesis

\smallskip

 $\quad (v)$ \ $ p \leq \frac{\alpha +1}{4} $.
 
 \smallskip
 
\noindent Take as a value of $\mu$ the middle point of the non void interval $[2p , \alpha +1 -2p  ]$, that is
$$
\mu= \frac{\alpha +1}{2}.
$$
Then the condition $ \alpha -p -\mu >0$ become
\smallskip

 $\quad (vi)$ \ $ p < \frac{\alpha -1}{2} $.
 
 \smallskip
 
\noindent Putting together the conditions $(iii)$, $(v)$, and $(vi)$, we finally obtain the  condition on $p$ 
$$
0 < p < \min\left(1, \frac{\alpha +1}{4}, \frac{\alpha -1}{2}\right).
$$
A close look at this formula shows that for $1 < \alpha \leq 3$ it is equivalent to 
$
0< p < \frac{\alpha -1}{2}.
$
This gives $2p-1 < \alpha -2$, which combined with 
\eqref{E:basic-ineq-d} gives the result.
\end{proof}

\begin{Theorem}\label{speed-decay-pointwise} (\textit{pointwise estimates of the speed}) \
 Let $x:[ t_0  ;+\infty [ \rightarrow\mathcal{H} $ be a solution of $ \mbox{(AVD)}_{\alpha} $.
 \begin{enumerate}
 \item If $\alpha \geq 3$, the trajectory satisfies
 $$
 \sup_{t\geq t_0} \Vert x(t)\Vert < + \infty
\mbox{\quad and \quad} 
\Vert \dot{x}(t)\Vert = \mathcal O (\frac{1}{t}).$$
 \item If $\alpha > 3$, we have 
 $$\Vert \dot{x}(t)\Vert = o (\frac{1}{t}).$$
 \item If $1 \leq \alpha \leq 3$, we have for all $p < \frac{\alpha -1}{2}$
 $$\Vert \dot{x}(t)\Vert = \mathcal O \left(\frac{1}{t^p}\right).
 $$
 \end{enumerate}
\end{Theorem}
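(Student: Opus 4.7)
My plan is to read off all three pointwise speed estimates from the Lyapunov/energy identities already in play. For item (1) I will exploit directly the Lyapunov function $\mathcal{E}_{\lambda,\xi}^1$ of the proof of Theorem \ref{fast}; for items (2) and (3) I will introduce the one-parameter weighted energy
\begin{equation*}
G_p(t):= \tfrac{1}{2}t^{2p}\Vert\dot{x}(t)\Vert^2 + t^{2p}\bigl(\Phi(x(t))-\min_{\mathcal H}\Phi\bigr),
\end{equation*}
for a suitable $p\in(0,1]$, and deduce boundedness of $G_p$ from the integral estimates of Theorems \ref{integral-decay} and \ref{speed-decay}.

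For item (1), the monotonicity of $\mathcal{E}_{\lambda,\xi}^1$ directly yields that $\Vert 2(x(t)-z)+t\dot{x}(t)\Vert$ is bounded for any $z\in\argmin\Phi$. When $\alpha>3$, the coefficient $\xi(t)=2(\alpha-3)$ is strictly positive, so we also get $\Vert x(t)-z\Vert$ bounded. The delicate boundary case $\alpha=3$, where this term vanishes, I would handle by setting $h(t):=\Vert x(t)-z\Vert^2$ and using the identity $\dot{x}=[v(t)-2(x-z)]/t$, where $v(t)$ is the above bounded quantity, to derive via Cauchy--Schwarz the differential inequality $\dot{h}(t)+4h(t)/t\leq 2M\sqrt{h(t)}/t$; comparing with $(t^2\sqrt{h})'$ then gives $\sqrt{h(t)}\leq M/2 + O(1/t^2)$, hence the desired boundedness of $x(\cdot)$. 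Finally the triangle inequality $t\Vert\dot{x}\Vert\leq\Vert 2(x-z)+t\dot{x}\Vert+2\Vert x-z\Vert$ gives $\Vert\dot{x}(t)\Vert=\mathcal O(1/t)$.

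For items (2) and (3), a direct calculation using \eqref{edo01} yields the clean identity
\begin{equation*}
\dot G_p(t) = (p-\alpha)\,t^{2p-1}\Vert\dot{x}(t)\Vert^2 + 2p\,t^{2p-1}\bigl(\Phi(x(t))-\min_{\mathcal H}\Phi\bigr).
\end{equation*}
Since $p\leq\alpha$ in what follows, the first term is nonpositive, and integration on $[t_0,T]$ shows $G_p(T)$ is bounded as soon as the two integrals $\int t^{2p-1}\Vert\dot{x}\Vert^2 dt$ and $\int t^{2p-1}(\Phi(x)-\min\Phi) dt$ converge. For $\alpha>3$ and $p=1$, this is guaranteed by Theorems \ref{speed-decay}(1) and \ref{integral-decay}(i). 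For $1\leq\alpha\leq 3$, the two integrability conditions read $p<(\alpha-1)/2$ (Theorem \ref{speed-decay}(2)) and $p<\alpha/3$ (Theorem \ref{integral-decay}(ii)), the former being the binding one on $[1,3]$ since $(\alpha-1)/2\leq\alpha/3$ there. In each case the bound $G_p(T)=O(1)$ delivers $\Vert\dot{x}(t)\Vert = \mathcal O(1/t^p)$ immediately.

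The one truly delicate step, and where I expect the main obstacle, is upgrading $\mathcal O$ to $\mathcal o$ in item (2). Since both integrals above converge as $T\to\infty$, the integrated identity forces $G_1(T)\to L\in[0,+\infty)$. If $L>0$, I would invoke the improved rate $\Phi(x(T))-\min\Phi = o(1/T^2)$ known for $\alpha>3$ from \cite{ACPR,May} (established within the same Lyapunov framework) to conclude that eventually $T^2\Vert\dot{x}(T)\Vert^2\geq L/2$; but this would yield $t\Vert\dot{x}(t)\Vert^2\gtrsim 1/t$ and contradict the finiteness of $\int t\Vert\dot{x}\Vert^2 dt$ from Theorem \ref{speed-decay}(1). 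Therefore $L=0$, i.e., $T^2\Vert\dot{x}(T)\Vert^2\to 0$, which is exactly the claim.
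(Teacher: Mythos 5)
Your proposal is correct, and for items (2)--(3) it is in substance the paper's own argument: your $G_p(t)$ is exactly the paper's scaled energy $\Gamma(t)=t^{2p}W(t)$ with $W(t)=\Phi(x(t))-\min_{\mathcal H}\Phi+\frac12\Vert\dot x(t)\Vert^2$, your derivative identity is the same one obtained there from $\dot W(t)=-\frac{\alpha}{t}\Vert\dot x(t)\Vert^2$, and the ranges $p=1$ (resp. $p<\frac{\alpha-1}{2}$, correctly identified as binding over $[1,3]$) are imported from Theorems \ref{integral-decay} and \ref{speed-decay} just as in the paper. You deviate in two places, both legitimately. First, in item (1) the paper handles $\alpha\geq 3$ uniformly: expanding the squared norm in $\mathcal{E}^{p}_{\lambda,\xi}$ and discarding $t^{2p}\Vert\dot x\Vert^2$ yields $\frac{d}{dt}\bigl(t^{2p-1}h(t)\bigr)\leq\frac1p\mathcal{E}^{p}_{\lambda,\xi}(t_0)$ with $h(t)=\Vert x(t)-z\Vert^2$, and integration gives boundedness exactly when $2p-1\geq 1$, i.e. $\alpha\geq 3$; your split ($\xi>0$ shortcut for $\alpha>3$, plus the comparison $\dot u+\frac{2u}{t}\leq\frac{M}{t}$ for $u=\sqrt h$ at $\alpha=3$) is sound (modulo the standard care where $h$ vanishes) and in fact yields the sharper asymptotic bound $\sqrt{h(t)}\leq M/2+\mathcal O(1/t^2)$, at the cost of a case distinction. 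Second, your upgrade to $o(1/t)$ in item (2) invokes the external rate $\Phi(x(t))-\min\Phi=o(1/t^2)$ from \cite{AP}, \cite{May}; this is not circular since those are independent prior results, but note the paper deliberately avoids it: it observes that $\frac1t\Gamma(t)=t(\Phi(x(t))-\min\Phi)+\frac t2\Vert\dot x(t)\Vert^2$ lies in $L^1(t_0,+\infty)$ by the same two integral estimates you already use, and a nonnegative function with a limit whose $\frac1t$-scaling is integrable must tend to $0$ — indeed the paper then \emph{derives} the $o(1/t^2)$ value rate as a corollary of $\lim\Gamma=0$. You could make your proof self-contained by the same observation: if $G_1(T)\to L>0$ then $TW(T)\sim L/T\notin L^1$, contradicting integrability of $tW(t)$, with no need to separate the potential and kinetic terms.
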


\begin{proof} 
 
$(1)$ Let us keep the same choice of the different parameters as that done in the proof of  Theorem \ref{fast}, that is     $\lambda(t)=2pt^{p-1}$,
 $ \xi(t)=2(\alpha -4p+1)pt^{2(p-1)}$, and  $p=\min (1,\frac{\alpha}{3})$.
As a consequence $\mathcal{E}_{\lambda, \xi}^{p}(\cdot)$ is nonincreasing, and since
$\xi(\cdot)$ is nonnegative, we deduce that, for all $t\geq t_0$
\begin{equation}\label{pointwise-est1}
\mathcal{E}_{\lambda, \xi}^{p}(t_0)  \geq \mathcal{E}_{\lambda, \xi}^{p}(t)\geq \frac{1}{2}\Vert 2pt^{p-1}(x(t)-z)+t^p \dot{x}(t)\Vert^2 .
\end{equation}
After developping the above quadratic term, and neglecting the nonnegative term $t^{2p}\Vert \dot{x}(t)\Vert^2$ we obtain
$$
2p^2t^{2p-2}\Vert x(t)-z\Vert^2  + 2pt^{2p-1}\left\langle x(t)-z,  \dot{x}(t) \right\rangle \leq \mathcal{E}_{\lambda, \xi}^{p}(t_0).
$$
Setting $h(t):= \Vert x(t)-z\Vert^2   $, we have equivalently 
$$
2pt^{2p-2}h(t)  + t^{2p-1}\dot{h}(t) \leq \frac{1}{p}\mathcal{E}_{\lambda, \xi}^{p}(t_0).
$$
Then note that
$$
\frac{d}{dt}t^{2p-1}h(t)= (2p-1)t^{2p-2}h(t) + t^{2p-1}\dot{h}(t) \leq 2pt^{2p-2}h(t)  + t^{2p-1}\dot{h}(t).
$$
Combining the two above inequalities we obtain
$$
\frac{d}{dt}t^{2p-1}h(t)\leq \frac{1}{p}\mathcal{E}_{\lambda, \xi}^{p}(t_0),
$$
which by integration gives
$$
\Vert x(t)-z\Vert^2   \leq \frac{t_0^{2p-1}}{t^{2p-1}}\Vert x(t_0)-z\Vert^2  +
\frac{t-t_0}{pt^{2p-1}}\mathcal{E}_{\lambda, \xi}^{p}(t_0).
$$
Hence for $2p-1 \geq 1$, i.e., $p \geq 1$ the trajectory remains bounded. Since $p=\min (1,\frac{\alpha}{3})$, this is equivalent to suppose $\alpha \geq 3$, and $p=1$.
Returning to (\ref{pointwise-est1}), and by the triangle inequality we immediately infer 
$$t\Vert\dot{x}(t)\Vert\leq \sqrt{2\mathcal{E}_{\lambda, \xi}^{1}(t_0 )}+2\Vert x(t)-z\Vert.$$
Hence

 $$\Vert \dot{x}(t)\Vert\leq \frac{\sqrt{2\mathcal{E}_{\lambda, \xi}^{1}(t_0 )}}{t }+\frac{2}{t}\sup_{t\geq t_0 }\Vert x(t)-z\Vert 
 $$
 with $\lambda=2$,
 $ \xi=2(\alpha -3)$. As a result, $\Vert \dot{x}(t)\Vert = \mathcal O (\frac{1}{t})$, which gives us item $(1)$.

\medskip

The next part of the proof is an adaptation to our framework of the technique developed by Attouch-Peypouquet in \cite{AP}. It is based on the decay properties of the scaled energy function
$$
\Gamma (t):= t^{2p} W(t)
$$
where $W$ is the global energy defined by
$$
W(t):= \Phi (x(t))-\min_{\mathcal H} \Phi + \frac{1}{2}  \Vert \dot{x}(t)\Vert^2 .
$$
By a direct application of the derivation chain rule, and using equation $ \mbox{(AVD)}_{\alpha} $, we have 
$$
\dot{W}(t) = -\frac{\alpha}{t}\Vert \dot{x}(t)\Vert^2 .
$$
Hence
\begin{align*}
\dot{\Gamma} (t)&= 2p t^{2p-1} W(t)+ t^{2p}\dot{W}(t)
\\
&= (p-\alpha)t^{2p-1}\Vert \dot{x}(t)\Vert^2 + 2p t^{2p-1}(\Phi (x(t))-\min_{\mathcal H} \Phi). 
\end{align*}
Taking  $p \leq \alpha$, we obtain
\begin{equation}\label{E:basic-decay1}
\dot{\Gamma} (t) \leq 2p t^{2p-1}(\Phi (x(t))-\min_{\mathcal H} \Phi).
\end{equation}
Let us successively examine the case $\alpha >3$ and $\alpha \leq 3$.
\medskip

$(2)$ Suppose $\alpha >3$.  In that case we take $p=1$, and hence     
$$
\Gamma (t):= t^{2} W(t).
$$
Since $p=1 \leq 3$, the inequality \eqref{E:basic-decay1} gives
$$
\dot{\Gamma} (t) \leq 2 t(\Phi (x(t))-\min_{\mathcal H} \Phi).
$$
By Theorem \ref{integral-decay} item $i)$, we have
$$
\int_{t_0 }^{+\infty} t(\Phi (x(t))-\min_{\mathcal H} \Phi) dt < + \infty . 
$$
Hence,
$
\left[ \dot{\Gamma} \right]^+ \in L^1 (t_0, + \infty)
.$
Since $\Gamma$ is nonnegative, this implies that $\lim \Gamma (t)$ exists. On the other hand,
$$
\frac{1}{t}\Gamma (t) = t W(t)= t(\Phi (x(t))-\min_{\mathcal H} \Phi ) + \frac{t}{2}  \Vert \dot{x}(t)\Vert^2 .
$$
By Theorem \ref{speed-decay} item $(1)$, we have
$t \Vert \dot{x}(t)\Vert^2 \in
L^1 (t_0, + \infty)$. Applying again Theorem \ref{integral-decay} item $i)$ we have 
$ t(\Phi (x(t))-\min_{\mathcal H} \Phi )  \in L^1 (t_0, + \infty) $.
By combining these results, we deduce that
 $\frac{1}{t} \Gamma (t) \in L^1 (t_0, + \infty)$. 
The function $\Gamma $ verifies $\frac{1}{t}\Gamma (t) \in L^1 (t_0, + \infty)$ and $\lim \Gamma (t)$ exists.
Hence, $\lim \Gamma (t) =0$.
Equivalently $\lim t^{2}\Vert \dot{x}(t)\Vert^2  =0$,
 which gives  the claim.
 
 \medskip

3) Now suppose  $\alpha \leq 3$.  In this case, we follow an argument similar to the one above but with a parameter $ p \leq \alpha $ which will be chosen during the proof in a convenient way. Let us return to 
\eqref{E:basic-decay1}. By Theorem \ref{integral-decay}, we have 
$t^{2p-1}(\Phi (x(t))-\min_{\mathcal H} \Phi ) \in
L^1 (t_0, + \infty)
$ for $p\leq 1$ and $p<\frac{\alpha}{3}$. 
As a consequence, under these two conditions on $p$ we have
$
\left[ \dot{\Gamma} \right]^+ \in L^1 (t_0, + \infty).
$
Since $\Gamma$ is nonnegative, this implies that $\lim \Gamma (t)$ exists. On the other hand,
$$
\frac{1}{t}\Gamma (t) = t^{2p-1}(\Phi (x(t))-\min_{\mathcal H} \Phi ) + \frac{t^{2p-1}}{2}  \Vert \dot{x}(t)\Vert^2 .
$$
Applying again Theorem \ref{integral-decay}, we have 
$t^{2p-1}(\Phi (x(t))-\min_{\mathcal H} \Phi ) \in
L^1 (t_0, + \infty)
$ for $p\leq 1$ and $p<\frac{\alpha}{3}$.
Moreover, by Theorem \ref{speed-decay} item $(2)$, we have
$t^{2p-1} \Vert \dot{x}(t)\Vert^2 \in
L^1 (t_0, + \infty)
$  for $2p-1 < \alpha -2$, that is $p < \frac{\alpha -1}{2}$.
Putting all these conditions together we have obtained that  for $p < \min(1, \frac{\alpha}{3}, 
\frac{\alpha -1}{2},\alpha)$, $\lim \Gamma (t)$ exists, and $\frac{1}{t} \Gamma (t) \in L^1 (t_0, + \infty)$. Then observe that $\min(1, \frac{\alpha}{3},\frac{\alpha -1}{2}, \alpha)= \frac{\alpha -1}{2}$ when $\alpha \leq 3$.
As a consequence, when $\alpha \leq 3$ we have  $\lim \Gamma (t) =0$ for all $p < \frac{\alpha -1}{2}$.
Hence $\lim t^{2p}\Vert \dot{x}(t)\Vert^2  =0$
for all $p < \frac{\alpha -1}{2}$ which is the claim (it is equivalent to state the result with $\mathcal O$ or small $o$).
\end{proof}

As a direct consequence of $\lim \Gamma (t) = \lim t^2 W(t)=0$, in the case $\alpha >3$ we recover the following result of  Attouch-Peypouquet \cite{AP}.

\begin{corollaire} For $\alpha >3$, and for any solution trajectory $x(\cdot)$ of $ \mbox{(AVD)}_{\alpha} $ we have 
$$
\Phi (x(t))-\min_{\mathcal H} \Phi =o \left(\frac{1}{t^2}\right)
$$
\end{corollaire}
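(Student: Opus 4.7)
The plan is to extract this result directly from the analysis already carried out in the proof of Theorem \ref{speed-decay-pointwise}, item $(2)$. Recall that in that proof, for $\alpha > 3$, one introduces the scaled global energy
$$\Gamma(t) = t^2 W(t), \qquad W(t) = \Phi(x(t))-\min_{\mathcal H}\Phi + \tfrac{1}{2}\|\dot x(t)\|^2,$$
and establishes, via the two integrability ingredients $t(\Phi(x(t))-\min\Phi)\in L^1(t_0,+\infty)$ (Theorem \ref{integral-decay}$(i)$) and $t\|\dot x(t)\|^2\in L^1(t_0,+\infty)$ (Theorem \ref{speed-decay}$(1)$), that $\lim_{t\to+\infty}\Gamma(t)=0$.

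From here the corollary is immediate. Since $\Phi(x(t))-\min_{\mathcal H}\Phi \geq 0$ and $\tfrac{1}{2}\|\dot x(t)\|^2 \geq 0$, we have
$$0 \leq t^2\bigl(\Phi(x(t))-\min_{\mathcal H}\Phi\bigr) \leq \Gamma(t),$$
and letting $t\to+\infty$ forces $t^2\bigl(\Phi(x(t))-\min_{\mathcal H}\Phi\bigr)\to 0$, which is exactly the small-$o$ statement claimed.

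There is really no obstacle here: the entire work has already been done in establishing $\lim\Gamma(t)=0$, and what remains is the one-line observation that $\Gamma$ bounds termwise the two nonnegative quantities of interest. The same argument of course also re-delivers the pointwise bound $\|\dot x(t)\| = o(1/t)$ from item $(2)$ of Theorem \ref{speed-decay-pointwise}, confirming that both halves of the energy decay at the improved $o(1/t^2)$ rate when $\alpha>3$.
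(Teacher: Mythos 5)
Your proof is correct and follows exactly the paper's route: the paper likewise presents this corollary as a direct consequence of the limit $\lim_{t\to+\infty}\Gamma(t)=\lim_{t\to+\infty}t^{2}W(t)=0$ established in the proof of Theorem \ref{speed-decay-pointwise}, item $(2)$, for $\alpha>3$. Your one-line observation that both nonnegative summands of $W(t)$ are dominated by $\Gamma(t)/t^{2}$ is precisely the implicit step the paper relies on, so nothing is missing.
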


We can now complete the table giving a synthetic view of the rate of convergence for the values and the speed of the solution trajectories of $ \mbox{(AVD)}_{\alpha} $.

\bigskip

\begin{large}
\begin{center}
\begin{tabular}{|c||c|c|c|}
     \hline
&&&\\     
$\alpha $ &   $\alpha< 3$ & $\alpha =3 $  &  $\qquad \alpha > 3 \qquad $ \\
\hline
&&&\\  
$\Phi (x(t))-\min_{\mathcal H} \Phi$ & $\mathcal O \left(\frac{1}{t^{\frac{2\alpha}{3}}}\right)$ & $\mathcal O \left(\frac{1}{t^2}\right)$ & $o \left(\frac{1}{t^2}\right)$  \\ 
 &&&\\  
     \hline
     &&&\\  
$I_p:= \int_{t_0 }^{+\infty}t^p(\Phi(x(t))-\min \Phi )dt$ & $\quad I_p < +\infty \quad$  $ \forall p < \frac{2\alpha}{3} -1 \quad$ & $I_p < +\infty \quad$ \  $\forall p <1 $ & $I_1< +\infty$ \\ 
 &&&\\  
     \hline
     &&&\\
  $\|\dot{x}(t)\|$ & $\mathcal O \left(\frac{1}{t^p}\right)\quad $  $ \forall p < \frac{\alpha -1}{2}$  & $\mathcal O \left(\frac{1}{t}\right)$& $o \left(\frac{1}{t}\right)$ \\       
      &&&\\  
     \hline
     &&&\\   
$J_p:= \int_{t_0 }^{+\infty} t^p \Vert\dot{x}(t)\Vert^2 dt$ & $J_p < +\infty \quad$ \  $ \forall p < \alpha -2$ & $J_p < +\infty \quad$ \  $\forall p <1 $ & $J_1< +\infty$ \\ 
 &&&\\  
     \hline
  \end{tabular}
 \end{center}
\end{large}

\bigskip

\section{Convergence of the trajectories}\label{S:conv trajectories}

\subsection{Weak convergence for $\alpha > \textbf{3}$} \
Let us recall the convergence result in the case $\alpha >3$ obtained by 
by Attouch-Chbani-Peypouquet-Redont \cite{ACPR} and  May \cite{May} in the case $\alpha >3$. We  give a brief demonstration of it, since it can  be obtained as a direct consequence of our previous results, and this enlights the situation in the case $\alpha \leq 3$.
\begin{Theorem}\label{w-convergence}
Let $\Phi : \mathcal H \rightarrow \mathbb R$ be a convex continuously differentiable function 
such that  $\argmin \Phi$ is nonempty.
Let $x:[ t_0  ;+\infty [ \rightarrow\mathcal{H} $ be a  classical  global solution  of $ \mbox{(AVD)}_{\alpha} $ with $\alpha >3$. Then $x(t)$ converges weakly, as $t\to +\infty$ to a point in $\argmin \Phi$.
\end{Theorem}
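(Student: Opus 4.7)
The plan is to apply Opial's lemma, which requires two ingredients: (a) every weak sequential limit point of $x(t)$ as $t \to +\infty$ lies in $\argmin \Phi$, and (b) for every $z \in \argmin \Phi$, the limit $\lim_{t\to+\infty}\|x(t)-z\|$ exists in $\mathbb R$. Ingredient (a) is essentially free from the material already proved: Theorem \ref{fast}(i) gives $\Phi(x(t)) \to \min_{\mathcal H}\Phi$, and since $\Phi$ is convex and continuous, it is weakly lower semicontinuous, so any weak cluster point $\bar x$ satisfies $\Phi(\bar x) \leq \liminf \Phi(x(t)) = \min_{\mathcal H} \Phi$, hence $\bar x \in \argmin \Phi$.

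The substantive part is (b). Fix $z \in \argmin \Phi$ and set $h(t) = \tfrac{1}{2}\|x(t)-z\|^2$. Differentiating twice and using $\mbox{(AVD)}_\alpha$ together with the convexity inequality $\langle x(t)-z,\nabla\Phi(x(t))\rangle \geq \Phi(x(t))-\min_{\mathcal H}\Phi \geq 0$, one obtains the core differential inequality
\begin{equation*}
\ddot h(t) + \frac{\alpha}{t}\dot h(t) \;\leq\; \|\dot x(t)\|^2 .
\end{equation*}
Multiplying by $t$ and regrouping terms yields
\begin{equation*}
\frac{d}{dt}\Bigl( t\dot h(t) + (\alpha-1)\,h(t)\Bigr) \;\leq\; t\|\dot x(t)\|^2 .
\end{equation*}
Since $\alpha > 3$, Theorem \ref{speed-decay}(1) gives $\int_{t_0}^{+\infty} t\|\dot x(t)\|^2\,dt < +\infty$, so the auxiliary function
\begin{equation*}
\phi(t) := t\dot h(t) + (\alpha-1)h(t) - \int_{t_0}^{t} s\|\dot x(s)\|^2 ds
\end{equation*}
is nonincreasing on $[t_0,+\infty)$.

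To conclude that $\phi$, and therefore $h$, admits a finite limit, I would bound $\phi$ from below and then control the term $t\dot h(t)$ independently. Boundedness of the trajectory, which was established in Theorem \ref{speed-decay-pointwise}(1), combined with the pointwise estimate $\|\dot x(t)\| = o(1/t)$ of Theorem \ref{speed-decay-pointwise}(2), gives
\begin{equation*}
|t\dot h(t)| \;\leq\; t\,\|x(t)-z\|\,\|\dot x(t)\| \;=\; o(1),
\end{equation*}
so $t\dot h(t) \to 0$. Since $h \geq 0$ and $\int_{t_0}^{+\infty} s\|\dot x(s)\|^2 ds < +\infty$, $\phi$ is bounded below; being nonincreasing it converges, and peeling off the two terms with known limits forces $h(t)$ itself to converge. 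Opial's lemma then delivers weak convergence of $x(t)$ to a point of $\argmin \Phi$.

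The only delicate step is item (b): the differential inequality for $h$ is standard, but one must be careful to choose the right multiplier ($t$ rather than $t^\alpha$) so that the integrability of $t\|\dot x\|^2$ can be exploited. The inputs from Theorem \ref{speed-decay-pointwise}, namely boundedness of the trajectory and $\|\dot x(t)\| = o(1/t)$, are what make the term $t\dot h(t)$ vanish in the limit and thus decouple the Lyapunov-type quantity $\phi$ into three pieces with controlled behaviour.
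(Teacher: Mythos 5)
Your proposal is correct, and it follows the paper through the same skeleton — Opial's lemma (Lemma \ref{Opial}), with item (ii) handled identically via the minimizing property and weak lower semicontinuity of $\Phi$, and with the same core differential inequality \eqref{E:weak-conv2}, i.e. $t\ddot h_z(t)+\alpha\dot h_z(t)\leq t\|\dot x(t)\|^2$ — but the way you extract the existence of $\lim h_z(t)$ is genuinely different. The paper multiplies \eqref{E:weak-conv2} by $t^{\alpha-1}$ (as its later remark makes explicit), integrates once to get $\dot h_z(t)\leq Ct^{-\alpha}+t^{-\alpha}\int_{t_0}^{t}s^{\alpha}\|\dot x(s)\|^2 ds$, and integrates again with Fubini to conclude $\left[\dot h_z\right]^+\in L^1(t_0,+\infty)$, which combined with $h_z\geq 0$ gives convergence of $h_z$; the only dynamical input is the integral estimate $\int_{t_0}^{+\infty}t\|\dot x(t)\|^2dt<+\infty$ of Theorem \ref{speed-decay}. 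You instead use the multiplier $t$, noting correctly that $\frac{d}{dt}\bigl(t\dot h_z(t)+(\alpha-1)h_z(t)\bigr)=t\ddot h_z(t)+\alpha\dot h_z(t)$, so that $\phi(t):=t\dot h_z(t)+(\alpha-1)h_z(t)-\int_{t_0}^{t}s\|\dot x(s)\|^2ds$ is nonincreasing; to decouple it you invoke the pointwise estimates of Theorem \ref{speed-decay-pointwise}, namely boundedness of the trajectory (item $(1)$) and $\|\dot x(t)\|=o(1/t)$ (item $(2)$, valid precisely for $\alpha>3$), which force $t\dot h_z(t)=t\langle x(t)-z,\dot x(t)\rangle\to 0$; then $\phi$ is bounded below, hence convergent, and since $\alpha-1>0$ the limit of $h_z$ follows. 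Both arguments are complete given what precedes Theorem \ref{w-convergence}, and the trade-off is clear: the paper's route is leaner in hypotheses — it needs only the energy-integral estimate, so it would survive in settings where no pointwise speed decay is available — while yours avoids the double integration and Fubini step, replaces the standard ``nonnegative function with $[\dot h]^+$ integrable'' lemma by plain monotone convergence of an explicit Lyapunov-type quantity, and yields the extra information $\langle x(t)-z,\dot x(t)\rangle=o(1/t)$ along the way; the price is that your proof is tied to the $o(1/t)$ velocity estimate, whose proof in the paper is itself the most technical part of Section \ref{S:continuous}.
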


\begin{proof} The proof is  based on the  Opial's lemma \ref{Opial}.  By elementary calculus, convexity of $\Phi$, and equation $ \mbox{(AVD)}_{\alpha} $, one can first establish that for any $z \in \argmin \Phi$, the function $h_z (t):= \frac{1}{2}\|x(t)-z  \|^2$ satisfies
\begin{equation}\label{E:weak-conv2}
t\ddot{h}_z(t) + \alpha\dot{h}_z(t)\leq t \|  \dot{x} (t)\|^2 .
\end{equation}
By  integrating the differential inequality \eqref{E:weak-conv2},
and using the estimate $
\int_{t_0 }^{+\infty} t \Vert\dot{x}(t)\Vert^2 dt < + \infty$ given in Theorem \ref{speed-decay} in the case $\alpha>3$, 
we infer
\begin{equation}\label{E:weak-conv3}
\left[ \dot{h}_z\right]^+ \in L^1 (t_0, + \infty).
\end{equation}   
Since $h_z$ is nonegative, this implies the convergence of $h_z$. The second item of the Opial's lemma is a direct consequence of the minimizing property of the trajectory, and of the convexity of $\Phi$.
\end{proof}
\subsection{Critical case:}\label{crit} $\alpha \textbf{=}\textbf{3}.$ \
As we have already pointed out, the convergence of the trajectories of $ \mbox{(AVD)}_{\alpha} $ in the case $\alpha =3$ remains a widely open question.
In the one-dimensional setting, and in the case $\alpha =3$,
as a main result, in this section, we prove the convergence  of  $ \mbox{(AVD)}_{\alpha} $ trajectories, without any restrictive assumption on the convex potential $\Phi$. This comes as a generalization of  Cabot-Engler-Gadat \cite[Theorem 3.1]{CEG2}, who obtained this result under the additional hypothesis that $\Phi$ is quadratically conditioned with respect to  $S=\argmin \Phi$.
We first establish some preliminary results of independent interest, and that will be useful in  proving the convergence result in the one-dimensional framework.

\begin{proposition}\label{conv-compact1}
Let $\cH$ be a finite dimensional Hilbert space, and let $\Phi : \mathcal H \rightarrow \mathbb R$ be a convex continuously differentiable function 
such that  $S= \argmin \Phi \neq \emptyset$.
Let $x:[ t_0  ;+\infty [ \rightarrow\mathcal{H} $ be a   solution  of $ \mbox{(AVD)}_{\alpha} $ with $\alpha = 3$. Then, 

\medskip

$i)$ \ $x$ is bounded and
$
\lim_{t\to +\infty} \mbox{\rm dist} (x(t), S)=0.
$

\medskip

$ii)$ Moreover, if $x(t) \in  S$ for $t$ large enough, then $x(t)$ converges to a point in $S$.
\end{proposition}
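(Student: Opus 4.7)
The plan is to prove part (i) by combining the boundedness statement from Theorem \ref{speed-decay-pointwise}(1) with the values convergence from Theorem \ref{fast}(i), then invoke finite-dimensionality. For part (ii), I will exploit the fact that $\nabla\Phi$ vanishes on $S$, which reduces $\mbox{(AVD)}_{\alpha=3}$ to an explicitly solvable linear ODE with an obvious limit.

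\textbf{Part (i).} First, I invoke Theorem \ref{speed-decay-pointwise}(1), which gives directly that for $\alpha \geq 3$ the trajectory is bounded: $\sup_{t \geq t_0} \|x(t)\| < +\infty$. Next, Theorem \ref{fast}(i) with $\alpha=3$ yields $\Phi(x(t)) - \min_{\cH}\Phi = \mathcal O(1/t^2) \to 0$. Since $\cH$ is finite-dimensional, the bounded trajectory has relatively compact range, and any cluster point $x_\infty$ of $x(t)$ as $t \to +\infty$ satisfies, by continuity of $\Phi$, $\Phi(x_\infty) = \min_{\cH}\Phi$, i.e.\ $x_\infty \in S$. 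A standard compactness argument (arguing by contradiction: if $\mbox{dist}(x(t_n),S) \geq \varepsilon$ along some $t_n \to +\infty$, extract a convergent subsequence to get a cluster point outside $S$, a contradiction) yields $\mbox{dist}(x(t),S)\to 0$.

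\textbf{Part (ii).} Suppose there exists $T \geq t_0$ such that $x(t) \in S$ for all $t \geq T$. Since $S = \argmin \Phi$ and $\Phi$ is $C^1$, we have $\nabla \Phi(x(t)) = 0$ for all $t \geq T$. Thus on $[T,+\infty[$ the equation $\mbox{(AVD)}_{\alpha=3}$ reduces to the linear ODE
\begin{equation*}
\ddot{x}(t) + \frac{3}{t}\dot{x}(t) = 0.
\end{equation*}
Setting $v = \dot{x}$, one gets $\dot{v}/v = -3/t$, so $\dot{x}(t) = \dot{x}(T)\,(T/t)^3$ for $t \geq T$. Integrating once more,
\begin{equation*}
x(t) = x(T) + \dot{x}(T)\,T^3 \int_T^t \frac{ds}{s^3} = x(T) + \frac{T\,\dot{x}(T)}{2}\left(1 - \frac{T^2}{t^2}\right).
\end{equation*}
As $t \to +\infty$, $x(t) \to x(T) + \tfrac{T}{2}\dot{x}(T)$; since $S$ is closed (because $\Phi$ is continuous) and $x(t)\in S$ for all $t\geq T$, the limit belongs to $S$.

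\textbf{Main obstacle.} Part (i) is essentially a corollary of previously established results combined with finite-dimensional compactness, so the content is modest. Part (ii) is also short because the hypothesis $x(t)\in S$ forces $\nabla\Phi(x(t))=0$ and trivializes the dynamics. Thus the real difficulty in this proposition is not so much the proofs themselves, but recognizing the right packaging: part (i) reduces the convergence question to the boundary behavior on $S$, while part (ii) handles that boundary behavior explicitly. The truly delicate step of the subsequent one-dimensional convergence theorem (not treated here) will be showing that the alternative case, where $x(t)$ oscillates in and out of $S$, can be controlled, for which this proposition clears away the trivial subcase.
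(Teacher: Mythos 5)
Your proof is correct and follows essentially the same route as the paper: part (i) combines Theorem \ref{fast}, Theorem \ref{speed-decay-pointwise} (1) and the identical finite-dimensional compactness/contradiction argument, while part (ii) reduces the dynamics on $S$ to $\ddot{x}(t)+\frac{3}{t}\dot{x}(t)=0$, which the paper integrates as $\frac{d}{dt}\left(t^{\alpha}\dot{x}(t)\right)=0$, giving $\dot{x}(t)=C/t^{\alpha}$, integrable for $\alpha>1$, hence convergence. One cosmetic remark: since $\dot{x}$ is vector-valued, justify the integration via the integrating factor $\frac{d}{dt}\left(t^{3}\dot{x}(t)\right)=0$ rather than writing $\dot{v}/v=-3/t$ (which is what you implicitly use anyway); your explicit limit $x(T)+\frac{T}{2}\dot{x}(T)$ is a pleasant refinement the paper leaves implicit.
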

\begin{proof}
By Theorem \ref{fast}
 $$
 \lim_{t \rightarrow+\infty}\Phi(x(t))=\min\Phi.
 $$
 By Theorem \ref{speed-decay-pointwise}, for $\alpha \geq 3$, and in particular for $\alpha =3$, we have 
 $$
 \sup_{t\geq t_0} \Vert x(t)\Vert < + \infty.
$$
Then the result follows from a classical topological argument.
If $\mbox{\rm dist} (x(t), S)$ fails to converge to zero, this implies the existence of a sequence $t_n \to + \infty$ and  $\epsilon >0$ such that, for all $n \in \mathbb N$, $\mbox{\rm dist} (x(t_n), S) \geq \epsilon$. Since $(x(t_n))$ is bounded, and $\cH$ is a finite-dimensional Hilbert space, after extracting a convergent subsequence $x(t_{n_k}) \to \bar{x}$, we obtain $\mbox{\rm dist} (\bar{x}, S) \geq \epsilon$
and $\bar{x} \in S$ (a consequence of $x(t)$  minimizing), a clear contradiction.\\
Suppose moreover that $x(t) \in  S$ for $t \geq t_1$. Hence, $\nabla \Phi (x(t))=0$, which by $ \mbox{(AVD)}_{\alpha} $ gives
$\ddot{x}(t) + \frac{\alpha}{t} \dot{x}(t)  =0.$
Equivalently,
$
\frac{d}{dt}\left(t^{\alpha}\dot{x}(t)\right)=0,
$
which gives
$
\dot{x}(t)= \frac{C}{t^{\alpha}}.
$
Hence for $\alpha >1$, and in particular for $\alpha =3$, $\dot{x}$ is integrable, which implies the convergence of the trajectory.

\end{proof}

The following result is valid in a general Hilbert space, and for any $\alpha>0$. It  plays a key role in  the proof of the convergence result.

\begin{proposition}\label{basic.dim1} Let $\cH$ be a  Hilbert space, $\alpha \leq 3$, and $x(\cdot)$ be a trajectory of $ \mbox{(AVD)}_{\alpha} $. Suppose that for some $ t_2 \geq t_1$
$$
x(t_1)= x(t_2) \in S= \argmin \Phi.
$$
Then, 
$$
 t_2^{\frac{\alpha}{3}} \|\dot{x}(t_2) \|\leq t_1^{\frac{\alpha}{3}}\|\dot{x}(t_1)\|.
$$
In particular, for $\alpha =3$,
$$
 t_2 \|\dot{x}(t_2) \leq t_1\|\dot{x}(t_1)\|.
$$
\end{proposition}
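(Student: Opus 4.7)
The plan is to apply the Lyapunov function $\mathcal{E}^{p}_{\lambda,\xi}$ introduced in the proof of Theorem \ref{fast}, with the subcritical choice of parameters $p = \tfrac{\alpha}{3}$, $\lambda(t) = 2p\, t^{p-1}$, and $\xi(t) = 2(\alpha - 4p + 1)p\, t^{2(p-1)}$, and to evaluate it at the two instants $t_1$ and $t_2$ with the special choice of anchor point $z = x(t_1) = x(t_2)$.

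The key observation is that under the hypothesis of the proposition, the point $z := x(t_1) = x(t_2)$ belongs to $S = \argmin \Phi$ and coincides with the trajectory at \emph{both} $t_1$ and $t_2$. Consequently, evaluating
\[
\mathcal{E}^{p}_{\lambda,\xi}(t) = t^{2p}\bigl[\Phi(x(t)) - \min_{\mathcal H}\Phi\bigr] + \tfrac{1}{2}\bigl\|\lambda(t)(x(t) - z) + t^p \dot{x}(t)\bigr\|^2 + \tfrac{\xi(t)}{2}\|x(t) - z\|^2
\]
at $t = t_i$ for $i = 1, 2$, the first term vanishes (since $x(t_i) \in S$), the third term vanishes (since $x(t_i) = z$), and the cross-term inside the squared norm also vanishes, so that
\[
\mathcal{E}^{p}_{\lambda,\xi}(t_i) = \tfrac{1}{2}\, t_i^{2p}\,\|\dot{x}(t_i)\|^2, \qquad i = 1, 2.
\]

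Since $\alpha \leq 3$, the parameters satisfy the compatibility conditions $(\mathbf{H}_1)$--$(\mathbf{H}_5)$ of the proof of Theorem \ref{fast} (in particular $\xi \geq 0$, because $\alpha \geq 4p - 1$ is equivalent to $\alpha \leq 3$), so $\mathcal{E}^{p}_{\lambda,\xi}$ is nonincreasing on $[t_0,+\infty[$. Applied on $[t_1, t_2]$, this monotonicity yields $\mathcal{E}^{p}_{\lambda,\xi}(t_2) \leq \mathcal{E}^{p}_{\lambda,\xi}(t_1)$, hence $t_2^{2p}\|\dot{x}(t_2)\|^2 \leq t_1^{2p}\|\dot{x}(t_1)\|^2$, and taking square roots with $p = \tfrac{\alpha}{3}$ gives the announced inequality. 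The case $\alpha = 3$ is just the specialization $p = 1$, $\xi \equiv 0$, which directly yields $t_2\|\dot{x}(t_2)\| \leq t_1\|\dot{x}(t_1)\|$.

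There is essentially no obstacle: the entire proof rests on recognizing that the Lyapunov function previously constructed for the decay of $\Phi(x(t)) - \min\Phi$ collapses to a weighted kinetic energy at any instant where the trajectory hits a chosen minimizer, so that its monotonicity automatically becomes a monotonicity statement about $t^{\alpha/3}\|\dot{x}(t)\|$ at such instants. The only point requiring a line of care is verifying that the parameter choice $p = \alpha/3$ remains admissible for all $\alpha \leq 3$ (so $\xi \geq 0$), which is precisely the subcritical regime covered by Theorem \ref{fast}(ii).
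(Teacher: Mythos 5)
Your proposal is correct and follows essentially the same route as the paper: both proofs take $z = x(t_1) = x(t_2)$ as the anchor in the Lyapunov function $\mathcal{E}_{\lambda,\xi}^{p}$ of Theorem \ref{fast} (with $p=\min(1,\tfrac{\alpha}{3})=\tfrac{\alpha}{3}$ in the subcritical case, so the two parameter choices coincide), observe that $\mathcal{E}_{\lambda,\xi}^{p}(t_i)$ reduces to $\tfrac{1}{2}t_i^{2p}\|\dot{x}(t_i)\|^2$, and conclude from monotonicity on $[t_1,t_2]$. Your verification that $\xi \geq 0$ amounts to $\alpha \leq 3$ is a correct detail the paper leaves implicit.
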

\begin{proof}
Set $z= x(t_1)= x(t_2) \in S= \argmin \Phi$, take $p=\min (1,\frac{\alpha}{3})$, and consider
the function $\mathcal{E}_{\lambda, \xi}^{p}:[t_0, +\infty[ \to \mathbb R^+$
\begin{equation} \label{E:basic-Lyap-bb}
\mathcal{E}_{\lambda, \xi}^{p}(t)=t^{2p}\left[ \Phi(x(t))-\min_{\mathcal H} \Phi\right] + \frac{1}{2}\Vert \lambda(t)(x(t)-z)+t^p \dot{x}(t)\Vert^{2}+\frac{\xi(t)}{2}\Vert x(t)-z\Vert^2 
\end{equation}
which serves as a Lyapunov function in the proof of  Theorem \ref{fast}. It is nonincreasing, and hence $   \mathcal{E}_{\lambda, \xi}^{p}(t_2) \leq \mathcal{E}_{\lambda, \xi}^{p}(t_1)$. Because of  $z= x(t_1)= x(t_2)\in S= \argmin \Phi $, this is equivalent to
$$
\frac{1}{2}\Vert t_2^p \dot{x}(t_2)\Vert^{2} \leq \frac{1}{2}\Vert t_1^p \dot{x}(t_1)\Vert^{2}.
$$
By $p=\min (1,\frac{\alpha}{3})$, we get the announced result.
\end{proof}

The idea of the demonstration of the following Theorem is due to P. Redont (personal communication).

\begin{Theorem}
Take $\cH= \mathbb R$, and let $\Phi : \mathbb R \rightarrow \mathbb R$ be a convex continuously differentiable function 
such that  $S= \argmin \Phi \neq \emptyset$.
Let $x:[ t_0  ;+\infty [ \rightarrow\mathcal{H} $ be a    solution  of $ \mbox{(AVD)}_{\alpha} $ with $\alpha = 3$. Then $x(t)$ converges, as $t\to +\infty$, to a point in $S$.
\end{Theorem}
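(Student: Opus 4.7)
The plan is to argue by contradiction. Assume that $x(t)$ does not converge as $t \to +\infty$. Since $x$ is bounded by Proposition \ref{conv-compact1}, this forces $\ell := \liminf_{t\to+\infty} x(t) < L := \limsup_{t\to+\infty} x(t)$. In dimension one, $S = \argmin \Phi$ is a closed interval, and $\mbox{\rm dist}(x(t), S) \to 0$ implies $[\ell, L] \subset S$; in particular $S$ contains two distinct points $z_1 < z_2$.

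For $\alpha = 3$ and $p = 1$, the Lyapunov function of Theorem \ref{fast} reduces, the $(\alpha - 3)\Vert x - z\Vert^2$ term vanishing, to $\mathcal{E}_z(t) = t^2[\Phi(x(t)) - \min \Phi] + \frac{1}{2}\bigl(2(x(t)-z) + t\dot{x}(t)\bigr)^2$. This function is non-increasing and nonnegative for every $z \in S$, hence has a finite limit $\mathcal{E}_{z,\infty}$. Using the identity $A^2 - B^2 = (A-B)(A+B)$, one computes $\mathcal{E}_{z_1}(t) - \mathcal{E}_{z_2}(t) = -2(z_1 - z_2)\bigl(2x(t) - z_1 - z_2 + t\dot{x}(t)\bigr)$. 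Since $z_1 \neq z_2$, the scalar function $\phi(t) := 2x(t) + t\dot{x}(t)$ admits a finite limit $\eta \in \mathbb{R}$ as $t \to +\infty$.

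Now fix any $z \in (\ell, L)$; then $z \in S$, and by the intermediate value theorem the set $\{t : x(t) = z\}$ is unbounded above. Enumerate these crossings in increasing order as $t_n \nearrow +\infty$. If $\dot{x}(t_n) = 0$ for some $n$, then since $\nabla \Phi(z) = 0$ the constant trajectory $y \equiv z$ solves the Cauchy problem associated to $\mbox{(AVD)}_{3}$ with data $(z, 0)$ at $t_n$, and uniqueness yields $x \equiv z$ on $[t_n, +\infty)$, contradicting $\ell < L$. Hence $\dot{x}(t_n) \neq 0$ for all $n$; moreover, on each open interval $(t_n, t_{n+1})$ the continuous function $x - z$ keeps a constant sign, so $\dot{x}(t_n)$ and $\dot{x}(t_{n+1})$ have opposite signs. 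By Proposition \ref{basic.dim1}, $t_n |\dot{x}(t_n)|$ is non-increasing and converges to some $\mu(z) \geq 0$, so the cluster set of $(t_n \dot{x}(t_n))$ is exactly $\{-\mu(z), +\mu(z)\}$. But $\phi(t_n) = 2z + t_n \dot{x}(t_n) \to \eta$ forces $(t_n \dot{x}(t_n))$ to converge, whence $\mu(z) = 0$ and $\eta = 2z$.

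The equality $\eta = 2z$ must therefore hold for every $z$ in the non-degenerate open interval $(\ell, L)$, which is absurd. Hence $\ell = L$, so $x(t)$ converges as $t \to +\infty$, and the limit lies in $S$ by Proposition \ref{conv-compact1}. The crux of the argument, and its main difficulty, is establishing the existence of $\lim_{t\to+\infty}\bigl(2x(t) + t\dot{x}(t)\bigr)$, which is obtained by subtracting the Lyapunov functions $\mathcal{E}_{z_1}$ and $\mathcal{E}_{z_2}$ attached to two distinct minimizers; the one-dimensional structure is then exploited through the alternation of signs of $\dot{x}$ at crossings of an intermediate point of $S$.
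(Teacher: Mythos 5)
Your proof is correct, and it takes a genuinely different route from the paper's. The paper argues by a trichotomy on the position of the trajectory relative to $S=[a,b]$ (eventually beyond one endpoint, eventually inside $S$, or crossing both $a$ and $b$ infinitely often) and, in the oscillatory case, integrates the linear equation $t\ddot{x}+3\dot{x}=0$, valid while $x(t)\in S$, across each traverse from $a$ to $b$: every loop decreases $t|\dot{x}(t)|$ by the fixed amount $2(\alpha-1)(b-a)$, the excursion beyond $b$ being bridged by Proposition \ref{basic.dim1}, so infinitely many loops would drive the nonnegative quantity $t|\dot{x}(t)|$ below zero. You share Proposition \ref{basic.dim1} with the paper, but your contradiction mechanism is different: subtracting the critical Lyapunov functions $\mathcal{E}_{z_1}$ and $\mathcal{E}_{z_2}$ attached to two distinct minimizers (an Opial-type anchor-difference device, here applied to the $\alpha=3$ energy with the $(\alpha-3)\|x-z\|^2$ term absent) yields the existence of $\eta=\lim_{t\to+\infty}\bigl(2x(t)+t\dot{x}(t)\bigr)$, and then the alternation of the sign of $\dot{x}$ at crossings of an interior cluster value $z$, combined with the monotonicity of $t|\dot{x}(t)|$ along those crossings, forces $\eta=2z$ for every $z\in(\ell,L)$, which is absurd. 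Your version buys something real: it works directly with the cluster interval $[\ell,L]\subset S$ and only needs crossings of a single interior point, so it treats uniformly scenarios that the paper's trichotomy covers only implicitly — for instance, a trajectory crossing $b$ infinitely often without ever reaching $a$ fits none of the paper's three listed cases and is excluded there only via the unstated fact that the velocity cannot change sign while $x(t)\in S$. The price is that you invoke forward uniqueness for the Cauchy problem to dispose of a crossing with $\dot{x}(t_n)=0$ (and, implicitly, to justify that the crossings are isolated, so that the enumeration $t_n\nearrow+\infty$ makes sense); this is legitimate under the paper's standing hypotheses ($\nabla\Phi$ Lipschitz on bounded sets, uniqueness granted via Haraux), whereas the paper's own proof never uses uniqueness. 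If you wished to avoid it, note that by Proposition \ref{basic.dim1} the quantity $t|\dot{x}(t)|$ is nonincreasing along crossings of $z$, so either its limit $\mu(z)$ is zero, in which case $\eta=2z$ follows directly without any alternation argument, or $\mu(z)>0$, in which case the crossings are eventually isolated with $\dot{x}\neq 0$ and your alternation argument applies verbatim; so this is a cosmetic reordering, not a gap.
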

\begin{proof}
Recall that, by Theorem \ref{speed-decay-pointwise},  for $\alpha =3$, the trajectory is bounded, and minimizing.  As a consequence,  when $ \argmin \Phi  $ is reduced to a singleton $x^*$, it is the unique cluster point of the trajectory, which implies the convergence of the trajectory to $x^*$.
 Thus we consider the case where $ \argmin \Phi  $  is an interval of positive length (possibly infinite), let $ \argmin \Phi  = [a,b]$.
 We  consider the case where $a$ and $b$ are finite. The argument works in the same way when one of them, or both, is infinite. There are three possible cases:
 \begin{itemize}
 \item There exists  $T \geq t_0$ such that $x(t) \geq b$ \ for all $t \geq T$. Then $b$ is the unique cluster point of the trajectory, which implies the convergence of the trajectory to $b$. Symetrically, if 
there exists  $T \geq t_0$ such that $x(t) \leq a$, for all $t \geq T$, then the trajectory converges  to $a$.
 \item There exists  $T \geq t_0$ such that, for all $t \geq T$,  $ a \leq x(t) \leq b$. Then, the convergence of the trajectory is a consequence of Proposition \ref{conv-compact1} $ii)$.

\item It remains to  consider the case,  which is the most delicate to analyze,  where the trajectory passes in $ a $ and $ b $ an infinite number of times. Indeed, we will see that this is impossible. The argument is based on the analysis of the decay of the quantity $ t \|\dot{x}(t)\|$ during a loop.  
Let $s_{n} \leq t_{n} \leq u_n \leq v_n $ be consecutives times such that  $x(s_{n})=a$,  $x(t_{n})=b$, and $a \leq x(t) \leq b$ for all $t\in [s_{n} , t_{n} ]$, $x(u_{n})=b$, $x(v_{n})=a$ and
$a \leq x(t) \leq b$ for all $t\in [u_{n} , v_{n} ]$.
 For  $t\in [s_{n} , t_{n} ]$ we have 
$t\ddot{x}(t) + \alpha \dot{x}(t)  =0$. Equivalently
\begin{equation}\label{E:dim1}
\frac{d}{dt}\left(t\dot{x}(t)\right) + (\alpha -1) \dot{x}(t)  =0.
\end{equation}
After integration  of \eqref{E:dim1} on the interval $[s_{n} , t_{n} ]$, we obtain
$$
t_{n}\dot{x}(t_{n})- s_{n}\dot{x}(s_{n})= - (\alpha -1) (b-a).
$$
Taking account of the sign of the derivative of $x$    (positive at $s_{n}$, since the trajectory enters the interval $[s_{n} , t_{n} ]$, positive at $t_{n}$ since it leaves the interval), we have
$$
|t_{n}\dot{x}(t_{n})|= |s_{n}\dot{x}(s_{n})|- (\alpha -1) (b-a).
$$
Symetrically, 
$$
|v_{n}\dot{x}(v_{n}) |= | u_{n}\dot{x}(u_{n})| - (\alpha -1) (b-a).
$$
By Lemma \ref{basic.dim1} we have
$$
 | u_{n}\dot{x}(u_{n})| \leq |t_{n}\dot{x}(t_{n})| .
$$
Combining the above equalities, we obtain
$$
 | v_{n}\dot{x}(v_{n})| \leq |s_{n}\dot{x}(s_{n})| -2 (\alpha -1) (b-a) .
$$
 Therefore, each time the trajectory returns to  $a$ after passing  $b$,  the quantity $ t \|\dot{x}(t)\|$ decreases by a fixed positive quantity. This excludes the possibility of an infinite number of loops, and thus gives the contradiction.
 \end{itemize}
\end{proof}

\begin{remark}
Based on the convexity assumption on $\Phi$, and without any further geometrical assumption on $\Phi$, the convergence of the trajectories in the case $\alpha \leq 3$ is still an open problem.
The estimates we obtained in the previous sections did not allow us to conclude.
For example, let us consider  the case $ \alpha = 3 $.
Then, by Theorem \ref{speed-decay} 
 \begin{equation}\label{E:weak-conv-0}
\int_{t_0 }^{+\infty} t^{1-\epsilon} \Vert\dot{x}(t)\Vert^2 dt < + \infty  \quad \mbox{for all } \  \epsilon >0 . 
\end{equation}  
Let us examine how to exploit this information in the integration of inequation \eqref{E:weak-conv2}. First, after multiplication of \eqref{E:weak-conv2} by $t^{\alpha -1}$ and integration we obtain
$$
\dot{h}_z (t) \leq \frac{C}{t^{\alpha}} + \frac{1}{t^{\alpha}}\int_{t_0}^t  s^{\alpha}\Vert\dot{x}(s)\Vert^2 ds.
$$
After division by $t^{\epsilon}$, we get
$$
\frac{1}{t^{\epsilon}}\dot{h}_z (t) \leq \frac{C}{t^{\alpha +\epsilon}} + \frac{1}{t^{\alpha +\epsilon}}\int_{t_0}^t  s^{\alpha}\Vert\dot{x}(s)\Vert^2 ds.
$$
Integrating the above inequality, and using \eqref{E:weak-conv-0}  with Fubini theorem gives, for all $\epsilon >0$
\begin{equation}\label{E:weak-conv}
  \quad \left[\frac{1}{t^{\epsilon}} \dot{h}_z\right]^+ \in L^1 (t_0, + \infty).
\end{equation}
Moreover, by Theorem \ref{speed-decay-pointwise} item $(1)$, we know that the trajectory, and therefore $h_z$ is bounded and nonnegative. It may be asked whether these properties are sufficient to conclude that  $h_z$ converges, at least ergodically. Unfortunately, the answer to this question is negative. Take for example $$h (t)= 1+ \sin(\ln(t)).$$
 Then, $ 0 \leq h(t) \leq 2$ and $\dot{h}(t)= \frac{\cos(\ln(t))}{t}$. Hence
$|\frac{1}{t^{\epsilon}}\dot{h}_z (t)| \leq \frac{1}{t^{1+\epsilon}}$, and
\eqref{E:weak-conv} is satisfied. But $h(t)$ fails to converges,   even ergodically. Indeed, a direct computation gives
$$
\frac{1}{t-t_0}\int_{t_0}^{t} \sin(\ln(s))ds \sim \frac{1}{2}(\sin(\ln(t)-\cos(\ln(t))
$$
which clearly fails to converge as $t$ goes to $+\infty$.
Indeed, the proof of Theorem \ref{w-convergence} uses precisely  \eqref{E:weak-conv} for $\epsilon=0$.
\end{remark}

\bigskip

Thus, it is natural  to make additional geometric assumptions on $\Phi$ which guarantee the convergence of  the trajectories of $ \mbox{(AVD)}_{\alpha} $. Of particular interest is  the strong convergence property which occurs for example in the presence of strong convexity. That is the situation  we are now considering. For other cases of strong convergence (solution set with non-empty interior, even functions) one can consult \cite{ACPR}, \cite{AC1}.

\subsection{Strong convergence}
In this subsection, $\cH$ is a general Hilbert space, and we assume that the convex function $\Phi$ has a \textit{strong minimum}, i.e., there exist 
$x^*\in \cH$ and $\mu>0$ such that for every $x\in \cH$,
\begin{equation}\label{eq.strong_minimum}
\Phi(x)\geq \Phi(x^*)+\frac{\mu}{2}\|x-x^*\|^2.
\end{equation}
This implies clearly that $\argmin \Phi=\{x^*\}$. Under this condition, we are able to show the strong convergence of the trajectories to this unique minimum, and determine precisely the decay rate of the energy $W$ along the trajectories.
Indeed, an interesting property that has been put recentenly to the fore in   \cite{AC1}, \cite{ACPR}, \cite{SBC} is that the convergence rates increase indefinitely with larger values of $\alpha$ for these functions. 
The following theorem completes these results by examinating also the case $\alpha \leq 3$ and gives a synthetic view  of this situation.
For simplicity, we assume that the inequality \eqref{eq.strong_minimum} is satisfied for every $x\in \cH$, but all that follows can be readily extended to the case where this inequality is satisfied only in a neighborhood of $x^*$.

An important instance for which 
\eqref{eq.strong_minimum} holds is the class of strongly convex functions. We recall 
 that the function $\Phi : \mathcal H \rightarrow \mathbb R$ is {\it strongly convex}, if  there exists a positive constant $\mu$  such that
$$  
\Phi(y) \geq \Phi (x) + \langle  \nabla \Phi (x)  , y-x \rangle + \frac{\mu}{2} \|x-y  \|^2
$$
for all $x, y \in \mathcal H$. 
Of course if  $x^*$ is a minimizer of $\Phi$, then 
$\nabla \Phi(x^*)=0$  and we recover \eqref{eq.strong_minimum} from the above inequality.

\smallskip

\begin{Theorem}
Let $\Phi:\mathcal H\rightarrow\R$ be a convex function which admits a strong minimum $x^* \in\cH$. Let $x:[t_0,+\infty[\to\mathcal H$ be a solution trajectory of $ \mbox{(AVD)}_{\alpha} $ with $\alpha >0$. Then $x(t)$ converges strongly, as $t\to+\infty$, to the unique element $x^*\in\argmin\Phi$. Moreover

 \medskip

  $(i)\quad   \Phi (x(t))-\min_{\mathcal H} \Phi 
  = \mathcal O \left(\frac{1}{t^{\frac{2\alpha}{3}}}\right)$
  
 \medskip
 
 $(ii) \quad \|x(t)-x^*\|^2=\mathcal O\left(t^{-\dta}\right)$, 
 
 \medskip 
 
 $(iii) \quad \Vert\dot{x}(t)\Vert =\mathcal O (t^{-\frac{1}{3 }\alpha}).$

\end{Theorem}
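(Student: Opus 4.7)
My plan is to derive all three bounds from a single Lyapunov-function argument that parallels the one used in Theorem \ref{fast}, but with the exponent $p$ chosen to equal $\alpha/3$ for the entire range $\alpha>0$. The strong-minimum inequality $\frac{\mu}{2}\|x-x^*\|^2\leq \Phi(x)-\min\Phi$ will be invoked exactly at those spots where, for $\alpha>3$, the standard argument breaks down because some auxiliary coefficient has the wrong sign. For $0<\alpha\leq 3$ the choice $p=\min(1,\alpha/3)=\alpha/3$ is exactly the one made in Theorem \ref{fast}, and item $(i)$ is literally its conclusion; no strong convexity is needed yet.

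For the supercritical regime $\alpha>3$, I would work with the amplified Lyapunov function
\[
\mathcal{E}(t)=t^{2\alpha/3}\bigl[\Phi(x(t))-\min\Phi\bigr]+\tfrac{1}{2}\bigl\|\lambda(t)(x(t)-x^*)+t^{\alpha/3}\dot{x}(t)\bigr\|^{2}+\tfrac{\eta(t)}{2}\|x(t)-x^*\|^{2},
\]
with $\lambda(t)=\tfrac{2\alpha}{3}\,t^{\alpha/3-1}$ and $\eta(t)$ a positive multiple of $t^{2\alpha/3-2}$ to be tuned. Differentiating $\mathcal{E}$ and using $\mbox{(AVD)}_{\alpha}$ as in the proof of Theorem \ref{fast}, the first and third bracketed terms of \eqref{E:basic-ineq} vanish thanks to the choices above, leaving a residual of the form $A(t)\|x(t)-x^*\|^{2}-B(t)[\Phi(x(t))-\min\Phi]$ with $A(t),B(t)\ge 0$ once $\eta$ is chosen appropriately. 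The strong-minimum inequality then provides $A(t)\|x-x^*\|^{2}\leq \frac{2A(t)}{\mu}[\Phi-\min\Phi]$, and selecting $\eta$ so that $2A(t)\leq \mu B(t)$ yields $\frac{d}{dt}\mathcal{E}(t)\leq 0$. From $\mathcal{E}(t)\leq \mathcal{E}(t_{0})$ and $\mathcal{E}(t)\geq t^{2\alpha/3}[\Phi(x(t))-\min\Phi]$ I then read off item $(i)$.

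Item $(ii)$ is immediate from $(i)$: the strong-minimum condition gives $\|x(t)-x^*\|^{2}\leq \frac{2}{\mu}[\Phi(x(t))-\min\Phi]=\mathcal{O}(t^{-2\alpha/3})$, which also supplies the strong convergence $x(t)\to x^{*}$. For item $(iii)$, the boundedness of $\mathcal{E}$ controls the kinetic term $\|\lambda(t)(x(t)-x^*)+t^{\alpha/3}\dot{x}(t)\|=\mathcal{O}(1)$, and the triangle inequality gives
\[
t^{\alpha/3}\|\dot{x}(t)\|\leq \bigl\|\lambda(t)(x(t)-x^*)+t^{\alpha/3}\dot{x}(t)\bigr\|+\lambda(t)\|x(t)-x^*\|.
\]
Since $\lambda(t)=\mathcal{O}(t^{\alpha/3-1})$ and, by $(ii)$, $\|x(t)-x^*\|=\mathcal{O}(t^{-\alpha/3})$, the second contribution is $\mathcal{O}(t^{-1})=\mathcal{O}(1)$, whence $\|\dot{x}(t)\|=\mathcal{O}(t^{-\alpha/3})$.

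The main obstacle I expect is the construction in the second paragraph: in the regime $\alpha>3$, the coefficient $\beta(t)=\lambda\dot{\lambda}+\dot{\xi}/2$ which drove the monotonicity argument in Theorem \ref{fast} now has the wrong sign when $p=\alpha/3$, so monotonicity of the Lyapunov function can only be rescued through a careful use of strong convexity. Balancing $\eta$ (and if necessary fine-tuning $\lambda$) so that the surplus $A(t)\|x-x^*\|^{2}$ is entirely absorbed into $B(t)[\Phi-\min\Phi]$ is where the whole difficulty lies; for $\alpha\leq 3$ everything is automatic since $\beta\leq 0$, and strong convexity enters only afterwards to deliver $(ii)$ and $(iii)$.
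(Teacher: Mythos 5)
Your treatment of the subcritical half coincides with the paper's: for $0<\alpha\leq 3$, item $(i)$ is Theorem \ref{fast}$(ii)$ verbatim, item $(ii)$ and the strong convergence follow from $\|x(t)-x^*\|^2\leq \frac{2}{\mu}\left(\Phi(x(t))-\min_{\mathcal H}\Phi\right)$, and your triangle-inequality derivation of $(iii)$ from the boundedness of the kinetic part of the Lyapunov function is correct (indeed more explicit than the paper, which disposes of $(iii)$ by pointing to its speed-estimate theorems). The genuine gap is in your second paragraph, the regime $\alpha>3$ --- which, note, the paper does not reprove at all: it simply cites \cite[Theorem 3.4]{ACPR}. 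Your construction there cannot work as described. With $p=\alpha/3$ one has $2p=\alpha-p$, so the choice $\lambda(t)=\frac{2\alpha}{3}t^{\alpha/3-1}$ annihilates \emph{simultaneously} the coefficients of $\Phi(x(t))-\min\Phi$ and of $\|\dot x(t)\|^2$ in \eqref{E:basic-ineq}, and there is no slack to create a negative term $-B(t)[\Phi-\min\Phi]$: taking $\lambda(t)<2pt^{p-1}$ makes the value-term coefficient positive, while taking $\lambda(t)>2pt^{p-1}=(\alpha-p)t^{p-1}$ makes the speed-term coefficient positive. Hence $B\equiv 0$ and your proposed absorption ``select $\eta$ so that $2A(t)\leq \mu B(t)$'' is vacuous. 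Moreover, the only $\eta$ that kills the unsigned cross term $\langle x(t)-x^*,\dot x(t)\rangle$ is the one dictated by $(\textbf{H}_2)$, namely $\eta(t)=2p(\alpha-4p+1)t^{2p-2}=2p(1-p)t^{2p-2}$, which is \emph{negative} for $\alpha>3$, contradicting your stipulation that $\eta$ be a positive multiple of $t^{2\alpha/3-2}$; any positive $\eta$ leaves an uncontrolled cross term, and with the speed coefficient already zero there is nothing to absorb it into. So monotonicity $\frac{d}{dt}\mathcal{E}(t)\leq 0$ is not obtainable along your route.

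The statement is nevertheless salvageable, but by an integrable-growth (Gronwall) argument rather than monotonicity. With $\lambda(t)=\frac{2\alpha}{3}t^{\alpha/3-1}$ and the (negative) cross-term-cancelling $\eta$, \eqref{E:basic-ineq} reduces to
\begin{equation*}
\frac{d}{dt}\mathcal{E}(t)\;\leq\;\beta(t)\,\|x(t)-x^*\|^2,\qquad \beta(t)=2p\,(p^2-1)\,t^{2p-3}>0 \quad (p=\tfrac{\alpha}{3}>1),
\end{equation*}
and the strong-minimum inequality together with the lower bound
$\mathcal{E}(t)\geq t^{2p}\left(\Phi(x(t))-\min\Phi\right)\left(1-\frac{2p(p-1)}{\mu t^2}\right)\geq \frac{1}{2}\,t^{2p}\left(\Phi(x(t))-\min\Phi\right)$ for $t$ large (the negative $\eta$-term is of relative order $t^{-2}$, so it does not destroy coercivity) yields $\frac{d}{dt}\mathcal{E}(t)\leq \frac{C}{t^{3}}\,\mathcal{E}(t)$. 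Since $t^{-3}$ is integrable, Gronwall's lemma bounds $\mathcal{E}$, and $(i)$, then $(ii)$ and $(iii)$, follow exactly as in your third paragraph. This is in substance the mechanism of \cite[Theorem 3.4]{ACPR}, whose Lyapunov function $t^{p}\left(t^2(\Phi(x(t))-\min_{\mathcal H}\Phi)+\frac{1}{2}\|\lambda(x(t)-x^*)+t\dot x(t)\|^2\right)$ is built precisely to run such a differential inequality; your write-up correctly locates the difficulty at the sign of $\beta$, but the fix you propose (restoring $\frac{d}{dt}\mathcal{E}\leq 0$ by tuning $\eta>0$) is the one step that provably fails.
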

\begin{proof}
The case $\alpha >3$ has already been analyzed  in the  articles mentioned above, see for example \cite[Theorem 3.4]{ACPR}.
The case $\alpha \leq 3$ is a direct consequence of Theorem \ref{fast} and Theorem \ref{speed-decay}, and of the property of strong minimum of $x^*$. Indeed, 
 we have  
$$\Vert x(t)-x^*\Vert^2\leq \frac{2}{\mu}\left(\Phi(x(t))-\Phi(x^*)\right).$$
The first two items then follow from Theorem \ref{fast} and the above inequality.
As a consequence $x(t)$ converges strongly to $x^*$, and so is bounded. Hence, we can apply Theorem \ref{speed-decay}, which gives the third item.
\end{proof}

\begin{remark}
The formulas for the rate of decay are the same in the cases $\alpha \leq 3$, and $\alpha \geq3$. It is not such surprising, since the proof used in \cite[Theorem 3.4]{ACPR} in the case $\alpha > 3$ is based on the following Lyapunov function
\begin{align*} 
\mathcal E_\lambda^p(t): =
  t^p\left(t^2(\Phi(x(t))-\min_{\mathcal H}\Phi)+ 
    \frac{1}{2}\|\lambda(x(t)-x^*)+t\dot x(t)\|^2\right),
\end{align*}
whose structure is quite similar to ours.
\end{remark}

\begin{remark} A model example of strongly convex function is  $\Phi(x) = \frac{1}{2} \|x\|^2$, which is a positive definite quadratic function. In this case, one can compute explicitely the solution trajectories of \eqref{edo01} with the help of the Bessel functions. Indeed
the solution of 
$$\mbox{(AVD)}_{\alpha} \quad \quad \ddot{x}(t) + \frac{\alpha}{t} \dot{x}(t) + x(t) = 0
$$
with  Cauchy data $x(0)= x_0$, \ $\dot{x}(0) =0$ is given by
\begin{equation*}
  x(t) =2^{\frac{\alpha -1}{2}} \Gamma(\frac{\alpha +1}{2})\frac{J_{\frac{\alpha -1}{2}}(t)}{t^{\frac{\alpha -1}{2}}}x_0.
\end{equation*}
In the above formula
$J_{\frac{\alpha -1}{2}}(\cdot)$ is the first kind
 Bessel function of order $\frac{\alpha -1}{2}$. For large $t$,
\begin{center}
$
J_{\alpha}(t)= \sqrt{\frac{2}{\pi t}} \left( \cos \left(t-  \frac{\pi \alpha}{2} -\frac{\pi}{4}\right) +\mathcal O(\frac{1}{t})\right). 
$
\end{center}
Hence
$$
\Phi(x(t))-\min_{\mathcal H}\Phi=\mathcal O(t^{-\alpha}).
$$
One can compare  with the rate $\mathcal O(t^{-\dta})$, which is valid for arbitrarily strongly convex functions.
One can also conclude from this example that, when $0< \alpha \leq 2$, \textit{the parameter of the optimal rate of convergence (in the worst case) is between $\frac{2\alpha}{3}$ and $\alpha$}.
\end{remark}

\begin{remark} The strong convexity property is a particular instance of the Kurdyka-Lojasiewicz property. It would be interesting to examine the problem of convergence for  convex functions satisfying this property. One can consult B\'egout-Bolte-Jendoubi \cite{BBJ} for a recent study of  damped second-order gradient systems under this geometric assumption. 
\end{remark}

\section{Inertial Forward-Backward algorithms}\label{algo}
Our study aims at complementing in the subcritical case $\alpha <3$ the results on the rate of convergence of the inertial forward-backward methods, and of the FISTA type algorithms.
These splitting algorithms aim at solving structured optimization problems
\begin{equation}\label{algo1}
\min \left\lbrace   \Phi (x) + \Psi (x) : \ x \in \mathcal H   \right\rbrace   
\end{equation}
where  $\Phi: \mathcal H \to  \mathbb R$ is a continuously differentiable convex function whose gradient is Lipschitz continuous, and $\Psi: \mathcal H \to  \mathbb R \cup \lbrace   + \infty  \rbrace $ is a proper lower-semicontinuous convex function. We set $\Theta:=\Phi +\Psi$, which is the convex lower-semicontinuous  function to minimize.
Based on the link between continuous dynamical systems and algorithms, we are naturally led to extend the dynamics studied in the previous sections, and to consider the differential inclusion
\begin{equation}\label{diff-incl}
 \mbox{(AVD)}_{\alpha} \quad \quad \ddot{x}(t) + \frac{\alpha}{t} \dot{x}(t) +\nabla \Phi (x(t)) + \partial \Psi (x(t))  \ni 0,
\end{equation}
where $\partial \Psi$ is the subdifferential of $\Psi$ in the sense of convex analysis. A detailed study of this differential inclusion goes far beyond the scope of the present article. See \cite{ACR} for some results in the case of a fixed positive damping parameter.
However, given the validity of the subdifferential inequality for convex functions, the (generalized) chain rule for derivatives over curves (see \cite{Bre1}), most of the results presented in the previous sections can be transposed to this more general context. Thus, the results  obtained in the continuous case will only serve as guidelines 
for the study of the associated algorithms that we present now.

 As shown in \cite{ACPR}, \cite{SBC}, the time discretization of  
this system, implicit with respect to the nonsmooth term $\partial \Psi$, and explicit with respect to the smooth term $\nabla \Phi$, gives the 
 the Inertial Forward-Backward algorithm
$$\mbox{\rm(IFB)}_{\alpha} \quad\left\{ 
\begin{array}{l}
y_k=x_k+(1 - \frac{\alpha}{k})(x_k-x_{k-1})\\
\rule{0pt}{10pt}
x_{k+1}=\prox_{s\Psi}(y_k-s\nabla \Phi(y_k)).
\end{array}
\right.$$
Note that other types of discretization give slightly different  inertial forward-backward algorithms, see \cite{LFP}, an interesting topic for further research.
In the above formula, the parameter $s$ enters as $s = h^2 $, where $h$ is the step size of the discretization, and 
$\mbox{prox}_{ \gamma \Psi }: \cH \to \cH$ is the classical proximal operator. Recall that, for any $\gamma >0$, for any $x\in  \cH$,  
\begin{equation}\label{algo6}
\mbox{prox}_{ \gamma \Psi } (x)= {\argmin}_{\xi \in  \mathcal H} \left\lbrace \Psi (\xi) + \frac{1}{2 \gamma} \| \xi -x  \|^2
\right\rbrace  = \left(I + \gamma \partial \Psi \right)^{-1} (x).
\end{equation}
This last equality expresses that $\mbox{prox}_{ \gamma \Psi } $ is the resolvent of index $\gamma$ of the maximal monotone operator $\partial \Psi$.
One can consult \cite{BC}, \cite{PB}, \cite{Pey}, \cite{Pey_Sor} for a recent account on the proximal-based splitting methods.
 
\smallskip

a) For $\alpha = 3$, we recover the classical FISTA algorithm developed by Beck-Teboulle \cite{BT}, based on the acceleration method introduced by Nesterov \cite{Nest1} in the smooth case, and by G\"uler \cite{Guler} in the proximal setting. The rate of convergence of this method is $\Theta(x_k)-\min_\mathcal H \Theta= \mathcal O \left(\frac{1}{k^2}\right)$, where $k$ is the number of iterations.
Convergence of the sequences generated by Nesterov's accelerated gradient method and of FISTA, has been an elusive question for decades. 

\smallskip

b) For $\alpha >3$, it was shown by Chambolle-Dossal \cite{CD}   that each sequence generated by  $\mbox{\rm(IFB)}_{\alpha}$  converges weakly to an optimal solution, and  Attouch-Peypouquet \cite{AP} obtained the improved rate of convergence $\Theta(x_k)-\min_\mathcal H \Theta = o (\frac{1}{k^2})$. 

\smallskip

c) The case $\alpha <3$ has remained largely unknown. Precisely, we will study this situation and show  results parallel to  those obtained in the continuous framework in the previous sections.

\smallskip

We will systematically assume the following set of hypotheses
$$(H) \left\{
\begin{array}{l}
\bullet \cH \mbox{ \ is a real Hilbert space};\\
\bullet \ \Phi: \cH \to \R \mbox{ is convex,  differentiable with $L$-Lipschitz continuous gradient;}\\
\bullet \ \Psi: \cH \to \rinf \mbox{ is convex, proper and lower semicontinuous;}\\
\bullet  \mbox{ $\Theta:=\Phi + \Psi$ has a nonempty set of minimizers: $S=\argmin \Theta\neq \emptyset$;}\\
\bullet \  \mbox{The parameter $\alpha$ is positive;}
\\
\bullet \  \mbox{The parameter $s$ satisfies $s\in \,]0,1/L]$.}
\end{array}
\right.$$
Our main result in the algorithmic framework is presented below.

\begin{Theorem}\label{fast-algo} 
Let us make assumptions $(H)$.
Let $(x_k)$ be a sequence of iterates generated by algorithm $\mbox{\rm(IFB)}_{\alpha}$.
Suppose $0 < \alpha \leq 3$. Then the following  convergence result is verified: for all $p < \frac{2\alpha}{3}$, for all $k \geq 1$
$$
 (\Phi + \Psi) (x_k)-\min_{\mathcal H} (\Phi + \Psi) =   \mathcal O (\frac{1}{k^{p}}).
$$
\end{Theorem}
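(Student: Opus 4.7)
The plan is to transpose the Lyapunov analysis of Theorem~\ref{fast} to the discrete setting of $\mbox{\rm(IFB)}_{\alpha}$. Under the formal correspondence $t \leftrightarrow k$, $\dot{x}(t) \leftrightarrow x_k - x_{k-1}$, and fixing $z \in S = \argmin\Theta$, I introduce, for a given $p$ with $0 < p < \frac{2\alpha}{3}$ and $p \leq 1$, the discrete energy
\begin{equation*}
E_k = k^{2p}\bigl(\Theta(x_k)-\min_{\cH}\Theta\bigr) + \frac{1}{2s}\bigl\|\lambda_k(x_k-z) + k^p(x_k - x_{k-1})\bigr\|^2 + \frac{\xi_k}{2}\|x_k-z\|^2,
\end{equation*}
with $\lambda_k \sim 2pk^{p-1}$ and $\xi_k \sim 2p(\alpha - 4p + 1)k^{2(p-1)}$ chosen to mirror the functions $\lambda(t)$ and $\xi(t)$ from the continuous proof.

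The workhorse is the standard forward-backward descent inequality, valid because $s \leq 1/L$: for every $x \in \cH$,
\begin{equation*}
\Theta(x_{k+1}) \leq \Theta(x) + \tfrac{1}{2s}\|y_k-x\|^2 - \tfrac{1}{2s}\|x_{k+1}-x\|^2 - \tfrac{1}{2s}\|x_{k+1}-y_k\|^2.
\end{equation*}
Applying this with $x=z$ and $x=x_k$, and forming the convex combination with weights dictated by $\lambda_k$ and $k^{2p}$, I expect to obtain an inequality of the form
\begin{equation*}
E_{k+1} - E_k \leq \beta_k\|x_k-z\|^2 + R_k,
\end{equation*}
paralleling the bound $\frac{d}{dt}\mathcal{E}^p_{\lambda,\xi}(t) \leq \beta(t)\|x(t)-z\|^2$ of the continuous analysis, with $\beta_k = \mathcal{O}(k^{2p-3})$ and a discretization remainder $R_k$ consisting of terms of order at most $k^{2p-3}\|x_k-z\|^2$ and $k^{2p-2}\|x_k-z\|\|x_k-x_{k-1}\|$.

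The closing argument exploits the slack $p < \frac{2\alpha}{3}$. With the above choice of $\lambda_k$ and $\xi_k$, the dominant coefficient multiplying $\|x_k-z\|^2$ in $E_{k+1}-E_k$ is strictly negative and of order $-(\frac{2\alpha}{3} - p)k^{2p-3}$, which leaves room to absorb the discrete residuals $R_k$ once boundedness of $(x_k)$ is established. A bootstrap argument first secures $\sup_k\|x_k-z\| < +\infty$ (by applying the same Lyapunov machinery with a smaller exponent $p'<p$, or via a direct Fej\'er-type estimate in the spirit of Proposition~\ref{conv-compact1}). Summing the telescoping inequality then yields $\sup_k E_k < +\infty$, and since $E_k \geq k^{2p}(\Theta(x_k)-\min\Theta)$, the conclusion $\Theta(x_k) - \min\Theta = \mathcal{O}(k^{-2p})$ follows. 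As $p$ is arbitrary in $\bigl(0, \tfrac{2\alpha}{3}\bigr)$, the full statement is obtained.

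The principal obstacle is the algebra of the discrete one-step difference $E_{k+1}-E_k$. Unlike the continuous derivative, expanding $\|\lambda_{k+1}(x_{k+1}-z)+(k+1)^p(x_{k+1}-x_k)\|^2$ produces cross terms that do not cancel exactly, and the extrapolation $y_k = x_k + (1-\alpha/k)(x_k-x_{k-1})$ hidden in the descent inequality must be matched, coefficient by coefficient, with the combination $\lambda_k(x_k-z) + k^p(x_k-x_{k-1})$ appearing in $E_k$. This matching is precisely what forces the strict inequality $p < \tfrac{2\alpha}{3}$ rather than equality, and explains why the borderline rate $\mathcal{O}(k^{-2\alpha/3})$ remains, as noted in the introduction, only a conjecture at the algorithmic level.
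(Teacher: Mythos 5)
Your overall architecture matches the paper's: a discrete Lyapunov sequence modeled on $\mathcal{E}_{\lambda,\xi}^{p}$, with the forward-backward descent lemma applied at $x=z$ and $x=x_k$ (these two applications are exactly the paper's Propositions \ref{pr.energy_decay} and \ref{pr.h_k+1-h_k-alpha_k(h_k-h_k-1)}). But there are genuine gaps. First, your parameter bookkeeping is wrong, and it makes your conclusion false as stated. With the weight $k^{2p}$ in $E_k$, the drift inequality only closes under $p<\min\left(1,\frac{\alpha}{3}\right)$, yielding $\Theta(x_k)-\min\Theta=\mathcal{O}(k^{-2p})$ with $2p<\frac{2\alpha}{3}$; you instead impose $p<\frac{2\alpha}{3}$, $p\leq 1$, and conclude $\mathcal{O}(k^{-2p})$, which for, say, $\alpha=\frac{3}{2}$ and $p$ close to $1$ would give a rate close to $k^{-2}$, strictly better than the theorem and than the continuous rate $t^{-2\alpha/3}=t^{-1}$ --- an over-claim the analysis cannot deliver. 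The mechanism that fails is precisely the one you misplace: the slack is \emph{not} in the anchor coefficient. In both the continuous and discrete computations the coefficient of $\|x_k-z\|^2$ is $\sim 4p(p-1)(\alpha-2p+1)k^{2p-3}$ (continuous: $\beta(t)=-2p(1-p)(\alpha-2p+1)t^{2p-3}$), nonpositive as soon as $p\leq 1$, with no margin proportional to $\frac{2\alpha}{3}-p$. The slack is consumed by the velocity term: expanding $(k+1)^{2p}W_{k+1}-k^{2p}W_k$ (the paper builds $\mathcal{E}_k$ from $W_k=\Theta(x_k)-\min\Theta+\frac{1}{2s}\|x_k-x_{k-1}\|^2$) produces a residual $\frac{p}{s^{1-p}}(k+1)^{2p-1}\|x_{k+1}-x_{k}\|^2$ of order $k^{2p-1}$, dominant over both residuals you list, and the net coefficient of $\|x_k-x_{k-1}\|^2$ is $\sim\frac{1}{s^{1-p}}(3p-\alpha)k^{2p-1}$, negative only when $p<\frac{\alpha}{3}$.

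Second, your closing devices do not work and are not needed. A residual of order $k^{2p-1}\|x_{k+1}-x_k\|^2$ cannot be ``absorbed once boundedness of $(x_k)$ is established'': boundedness of $(x_k)$ gives no control on $\sum_k k^{2p-1}\|x_{k+1}-x_k\|^2$, and a cross term of order $k^{2p-2}\|x_k-z\|\,\|x_k-x_{k-1}\|$ cannot be dominated by a negative anchor term of order $k^{2p-3}$ either; in the paper the coefficient of $h_k-h_{k-1}$ is made to vanish \emph{exactly} by the choice of $\lambda_k$ and $\xi_k$, with the requirement $\xi_k\geq 0$ forcing, via an asymptotic expansion of $D_k$, the strict inequality $p<\frac{\alpha+1}{4}$ --- this, together with $p<\frac{\alpha}{3}$, is where discreteness actually bites, not the coefficient matching with $y_k$ you describe. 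The leftover forward-indexed velocity term is then handled not by absorption but by telescoping: the paper shows that the corrected sequence $\tilde{\mathcal{E}}_k=\mathcal{E}_k-\frac{p}{s^{1-p}}k^{2p-1}\|x_k-x_{k-1}\|^2$ is nonincreasing. Finally, your bootstrap for $\sup_k\|x_k-z\|<+\infty$ is unjustified in the subcritical regime: running the machinery at a smaller exponent $p'$ bounds a quantity of the form $\|\lambda_k(x_{k-1}-z)+s^{\frac{p'-1}{2}}k^{p'}(x_k-x_{k-1})\|$ with $\lambda_k\sim k^{p'-1}\to 0$, which does not bound $\|x_k-z\|$ (even in the continuous case boundedness of the trajectory is only obtained for $\alpha\geq 3$), and the scheme is not Fej\'er monotone. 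The paper avoids boundedness altogether: since $\alpha-4p+1>0$ gives $\xi_k\geq\epsilon\lambda_k^2$ for $k$ large, the bounded quantity $\tilde{\mathcal{E}}_k$ is minorized by $s^pk^{2p}\left(\Theta(x_k)-\min\Theta\right)$ plus a complete square, which yields the rate directly.
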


\begin{remark}
It is probable that the same order of convergence as in the continuous case is valid, $(\Phi + \Psi) (x_k)-\min_{\mathcal H} (\Phi + \Psi) =  \mathcal O \left( \frac{1}{k^{\frac{2\alpha}{3}}} \right)$. It is for technical reasons, in order to make the proof not too complicated, that we obtained the sligthly weaker result as indicated above. 
\end{remark}

\subsection{The inertial forward-backward algorithm: classical facts}
Let us first rewrite $\mbox{\rm(IFB)}_{\alpha}$ in a more compact way. 
 Let us define the operator $G_s:\mathcal H \to \mathcal H$ by 
$$ G_s(y)=\frac{1}{s}\left(y- \prox_{s\Psi}(y-s\nabla \Phi(y))\right),$$
and set $\alpha_k = 1 -  \frac{\alpha}{k}$. 
Thus, the algorithm can be written in an equivalent way as
$$\mbox{\rm(IFB)}_{\alpha} \quad\left\{ 
\begin{array}{l}
y_k=x_k+\alpha_k(x_k-x_{k-1})\\
\rule{0pt}{10pt}
x_{k+1}=y_k-sG_s(y_k).
\end{array}
\right.$$

The following  classical lemma plays a crucial role in the convergence analysis of the forward-backward algorithms.    It is  known as the descent rule for the forward-backward methods, see \cite [Lemma 2.3] {BT}, \cite [Lemma 1] {CD}.
Its validity relies essentially on the  stepsize limitation     
 $s \leq \frac {1} {L} $.
\begin{lemma}\label{lm.properties_G_s}
Assume hypothesis $(H)$. Then
\begin{itemize}
\item[$(i)$] For all $x,y \in \cH$
\begin{equation}\label{eq.property_Theta}
\Theta(y-sG_s(y))\leq \Theta(x)+\langle G_s(y),y-x\rangle -\frac{s}{2}\|G_s(y)\|^2.
\end{equation}
\item[$(ii)$] The  operator $G_s$ is monotone, and the following equivalences hold true
$$z\in \argmin \Theta\,\Longleftrightarrow\,  G_s(z)=0.$$ 
\end{itemize}
\end{lemma}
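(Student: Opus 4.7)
The plan is to let $p := y - sG_s(y)$, so that $p = \prox_{s\Psi}(y - s\nabla\Phi(y))$ by definition of $G_s$. The first-order optimality condition for the proximal minimization problem \eqref{algo6} then yields the fundamental inclusion
\begin{equation*}
G_s(y) - \nabla\Phi(y) \;\in\; \partial\Psi(p).
\end{equation*}
This single relation is essentially all the information about $G_s(y)$ that is needed, and extracting it is a direct unpacking of definitions. Both parts of the lemma will be built on top of it.

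For part $(i)$, the plan is to combine three classical inequalities, each natural under hypothesis $(H)$. First, the subgradient inequality for $\Psi$ at $p$, using the subgradient $G_s(y) - \nabla\Phi(y)$ from the inclusion above, evaluated at an arbitrary $x \in \cH$, gives
\begin{equation*}
\Psi(p) \leq \Psi(x) + \langle G_s(y) - \nabla\Phi(y),\, p - x\rangle.
\end{equation*}
Second, convexity of $\Phi$ yields $\Phi(y) \leq \Phi(x) + \langle \nabla\Phi(y),\, y - x\rangle$. Third, the descent lemma, which is the precise place where $L$-Lipschitz continuity of $\nabla\Phi$ together with the stepsize restriction $s \leq 1/L$ enters, yields $\Phi(p) \leq \Phi(y) + \langle \nabla\Phi(y),\, p - y\rangle + \frac{1}{2s}\|p - y\|^2$. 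Summing the three and using $p - y = -sG_s(y)$ (hence $\frac{1}{2s}\|p - y\|^2 = \frac{s}{2}\|G_s(y)\|^2$) produces
\begin{equation*}
\Theta(p) \leq \Theta(x) + \langle G_s(y),\, p - x\rangle + \frac{s}{2}\|G_s(y)\|^2.
\end{equation*}
The final move is purely algebraic: rewriting $p - x = (y - x) - sG_s(y)$ converts $\langle G_s(y),\, p - x\rangle$ into $\langle G_s(y),\, y - x\rangle - s\|G_s(y)\|^2$, and the claimed inequality \eqref{eq.property_Theta} drops out.

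For the equivalence in $(ii)$, I would chain definitions: $G_s(z) = 0$ is equivalent to $z = \prox_{s\Psi}(z - s\nabla\Phi(z))$, which by \eqref{algo6} is equivalent to $-s\nabla\Phi(z) \in s\partial\Psi(z)$, i.e.\ $0 \in \nabla\Phi(z) + \partial\Psi(z) = \partial\Theta(z)$; by convexity of $\Theta$ this characterizes minimizers. For monotonicity of $G_s$, the plan is to apply the fundamental inclusion at two points $y, y'$, setting $p = y - sG_s(y)$ and $p' = y' - sG_s(y')$, and invoke monotonicity of $\partial\Psi$ to obtain
\begin{equation*}
\langle (G_s(y) - \nabla\Phi(y)) - (G_s(y') - \nabla\Phi(y')),\, p - p'\rangle \;\geq\; 0.
\end{equation*}
Expanding $p - p' = (y - y') - s(G_s(y) - G_s(y'))$ and regrouping, one isolates $\langle G_s(y) - G_s(y'),\, y - y'\rangle$ and controls it from below in terms of $\|G_s(y) - G_s(y')\|^2$, $\langle \nabla\Phi(y) - \nabla\Phi(y'),\, y - y'\rangle$, and a cross term $-s\langle \nabla\Phi(y) - \nabla\Phi(y'),\, G_s(y) - G_s(y')\rangle$. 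The main obstacle is this cross term, since bare Lipschitz continuity is not enough to absorb it. The remedy is the Baillon--Haddad theorem, which upgrades $L$-Lipschitz continuity of $\nabla\Phi$ to $\frac{1}{L}$-cocoercivity; combined with $s \leq 1/L$, an AM--GM type estimate (equivalently, checking that a quadratic form in $\|G_s(y) - G_s(y')\|$ and $\|\nabla\Phi(y) - \nabla\Phi(y')\|$ has nonpositive discriminant) shows the lower bound is nonnegative, yielding monotonicity.
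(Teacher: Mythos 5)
Your proof is correct. The paper itself does not prove this lemma: it states it as classical, pointing to \cite[Lemma 2.3]{BT} and \cite[Lemma 1]{CD}, so there is no internal proof to compare against; what matters is that your argument is sound, and it is. Your part $(i)$ is exactly the Beck--Teboulle argument: the inclusion $G_s(y)-\nabla \Phi(y)\in \partial \Psi(p)$ from the proximal optimality condition, the subgradient inequality for $\Psi$ at $p$, the gradient inequality for $\Phi$ at $y$, and the descent lemma (the one place where $s\leq 1/L$ enters in this part), followed by the substitution $p-x=(y-x)-sG_s(y)$; the bookkeeping checks out. For $(ii)$, the chain of equivalences is right, with the implicit use of the sum rule $\partial \Theta = \nabla \Phi + \partial \Psi$ being legitimate since $\Phi$ is finite and continuous on all of $\cH$. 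Your monotonicity argument is also correct, and you rightly identify that bare Lipschitz continuity cannot absorb the cross term: writing $u=G_s(y)-G_s(y')$, $v=\nabla\Phi(y)-\nabla\Phi(y')$, $w=y-y'$, monotonicity of $\partial\Psi$ gives $\langle u,w\rangle \geq s\|u\|^2+\langle v,w\rangle - s\langle v,u\rangle$, and Baillon--Haddad cocoercivity plus $s\|u\|\|v\|\leq \frac{s}{2}\|u\|^2+\frac{s}{2}\|v\|^2\leq \frac{s}{2}\|u\|^2+\frac{1}{2L}\|v\|^2$ leaves the nonnegative remainder $\frac{s}{2}\|u\|^2+\frac{1}{2L}\|v\|^2$, so the discriminant check you propose indeed closes (in fact it shows $\langle u,w\rangle\geq \frac{s}{2}\|u\|^2$, slightly more than monotonicity). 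An equivalent standard shortcut worth knowing: the same cocoercivity shows $I-s\nabla\Phi$ is nonexpansive for $s\leq 2/L$, hence $T=\prox_{s\Psi}\circ(I-s\nabla\Phi)$ is nonexpansive, and $G_s=\frac{1}{s}(I-T)$ is then monotone by Cauchy--Schwarz; this packages your computation but relies on the same Baillon--Haddad input.
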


\subsection{Design of the Lyapunov function}
Let us give a discrete version of the  Lyapunov function 
\begin{equation} \label{E:basic-Lyap-b}
\mathcal{E}_{\lambda, \xi}^{p}(t)=t^{2p}\left[ \Theta(x(t))-\min_{\mathcal H} \Theta\right] + \frac{1}{2}\Vert \lambda(t)(x(t)-z)+t^p \dot{x}(t)\Vert^{2}+\frac{\xi(t)}{2}\Vert x(t)-z\Vert^2 ,
\end{equation}
that has been used in the continuous case (where now the function to minimize is $\Theta$).
To that end, let us reformulate $\mathcal{E}_{\lambda, \xi}^{p}$  with the help of the energy function
$W(t) = \Theta(x(t))-\min_{\mathcal H} \Theta +\frac{1}{2} \Vert\dot{x}(t)\Vert^{2}$, and the anchor function $h_z (t)= \frac{1}{2}\Vert x(t)-z \Vert^2 $.
After development of \eqref{E:basic-Lyap-b}, we get
\begin{equation*} 
\mathcal{E}_{\lambda, \xi}^{p}(t)=t^{2p}\left[ \Theta(x(t))-\min_{\mathcal H} \Theta + \frac{1}{2}\Vert\dot{x}(t)\Vert^{2}\right] + \frac{1}{2}\lambda(t)^2\Vert x(t)-z \Vert^2  + \lambda(t)t^p \langle x(t)-z,\dot{x}(t)\rangle +   \frac{\xi(t)}{2}\Vert x(t)-z\Vert^2 .
\end{equation*}
Hence,
\begin{equation} \label{E:basic-Lyap-d}
\mathcal{E}_{\lambda, \xi}^{p}(t)=t^{2p}W(t) +  \lambda(t)t^p \dot{h}(t) + \left( \lambda(t)^2 + \xi(t) \right)h(t)  .
\end{equation}
Let us introduce the  global energy (potential + kinetic) at stage  $k \in \N^*$ 
$$W_k :=\Theta(x_k)-\min \Theta + \frac{1}{2s}\|x_k-x_{k-1}\|^2 ,$$
and  the anchor function to the solution set (where $z$ denotes an element of $S$),
$$
h_k := \demi \|x_k-z\|^2 .
$$
Thus, a natural candidate for the Lyapunov function in the discrete case would be
\begin{equation} \label{E:basic-Lyap-f}
\mathcal{E}_k= s^pk^{2p} W_k+ s^{\frac{p-1}{2}}\lambda_k k^p(h_k - h_{k-1})+ (\lambda_k^2 + \xi_k)h_{k-1}  
\end{equation}
where $\lambda_k$ and $\xi_k$ are positive parameters that will be defined further.
Let us verify that  $\mathcal{E}_k$ is non-negative. Indeed,
\begin{equation*} 
\mathcal{E}_k= s^pk^{2p} (\Theta(x_k)-\min \Theta)+   s^{p-1}\frac{k^{2p}}{2} \|x_k-x_{k-1}\|^2  +    \demi s^{\frac{p-1}{2}}\lambda_k k^p( \|x_k-z\|^2 -  \|x_{k-1}-z\|^2)+ \frac{\lambda_k^2}{2} \|x_{k-1}-z\|^2 +  \frac{\xi_k}{2} \|x_{k-1}-z\|^2
\end{equation*}
From
$$
\|x_k-z\|^2 -  \|x_{k-1}-z\|^2 \geq 
2\langle x_{k-1}-z, x_k-x_{k-1}\rangle 
$$
and $\xi_k \geq 0  $   we infer
\begin{align*} 
\mathcal{E}_k &\geq  s^pk^{2p} (\Theta(x_k)-\min \Theta)+  \frac{1}{2} \left[ s^{p-1}k^{2p} \|x_k-x_{k-1}\|^2  +  2 s^{\frac{p-1}{2}} \lambda_k k^p\langle  x_k-x_{k-1}, x_{k-1}-z\rangle + \lambda_k^2 \|x_{k-1}-z\|^2 \right]\\
& = s^p k^{2p} (\Theta(x_k)-\min \Theta)+
\frac{1}{2} \|\lambda_k (x_{k-1} -z) + s^{\frac{p-1}{2}}k^p (x_k-x_{k-1})\|^2 .
\end{align*}
As a result, the condition  $\xi_k \geq 0$ ensures that 
$\mathcal{E}_k$ is non-negative, and minorized by $s^p k^{2p} (\Theta(x_k)-\min \Theta)$.
Thus, if we can prove that $\mathcal{E}_k$ is  bounded from above,  we will get the desired result.
In the continuous case, we have proved that, with judicious choices of the parameters $\lambda(t)$ and $\xi(t)$ the function $\mathcal{E}_{\lambda, \xi}^{p}$ was non-increasing. The discretization induces certain additional terms that we must show to be negligible, which makes the proof more technical. Indeed, we will
have to argue with the slightly modified function
$$
\mathcal{\tilde{E}}_k := \mathcal{E}_k   - \frac{p}{s^{1-p}}(k)^{2p-1}\|x_k-x_{k-1}\|^2
$$
which involves a lower order correction term.

\medskip

Our first lemmas  are based  on the study carried out by Attouch-Cabot \cite{AC2} in the case of a general coefficient $\alpha_k$. They analyze successively the rate of decay of the energy $W_k$ and of the anchor functions $h_k$. Then, based on formula \eqref{E:basic-Lyap-f} we will put these results together.
 
\subsection{Decay of the energy}
Given a sequence of iterates $(x_k)$ generated by algorithm $\mbox{\rm(IFB)}_{\alpha}$, 
let us evaluate the  decay of the   energy 
$W_k :=\Theta(x_k)-\min \Theta + \frac{1}{2s}\|x_k-x_{k-1}\|^2.$

\begin{proposition}\label{pr.energy_decay}
Under hypothesis $(H)$, let $(x_k)$ be a sequence generated by algorithm $\mbox{\rm(IFB)}_{\alpha}$. The energy sequence $(W_k)$ satisfies for every $k\geq 1$,
\begin{equation}\label{eq.energy_decay}
W_{k+1}-W_k\leq -\frac{1-\alpha_k^2}{2s}\|x_k-x_{k-1}\|^2.
\end{equation}
As a consequence, the sequence $(W_k)$ is nonincreasing.
\end{proposition}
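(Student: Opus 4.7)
The plan is to apply the descent rule for forward-backward methods (Lemma \ref{lm.properties_G_s}(i)) at $y=y_k$ with the comparison point $x=x_k$, and then convert the resulting inequality, which naturally involves $G_s(y_k)$, into an inequality involving the successive increments $x_{k+1}-x_k$ and $x_k-x_{k-1}$. This conversion is purely algebraic: using the reformulation of $\mbox{\rm(IFB)}_{\alpha}$ as $x_{k+1}=y_k-sG_s(y_k)$, together with $y_k-x_k=\alpha_k(x_k-x_{k-1})$, one gets the identity
\[
sG_s(y_k)=\alpha_k(x_k-x_{k-1})-(x_{k+1}-x_k).
\]

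Concretely, from Lemma \ref{lm.properties_G_s}(i) with $y=y_k$, $x=x_k$, I would write
\[
\Theta(x_{k+1})-\Theta(x_k)\leq \langle G_s(y_k),y_k-x_k\rangle-\frac{s}{2}\|G_s(y_k)\|^2.
\]
Substituting $y_k-x_k=\alpha_k(x_k-x_{k-1})$ and $sG_s(y_k)=\alpha_k(x_k-x_{k-1})-(x_{k+1}-x_k)$ into the right-hand side, I would expand the two inner products and the squared norm. Setting $a=x_k-x_{k-1}$ and $b=x_{k+1}-x_k$ for brevity, the cross term $-\alpha_k\langle a,b\rangle/s$ cancels out, and after collecting the remaining coefficients of $\|a\|^2$ and $\|b\|^2$ the right-hand side simplifies neatly to
\[
\frac{\alpha_k^2}{2s}\|x_k-x_{k-1}\|^2-\frac{1}{2s}\|x_{k+1}-x_k\|^2.
\]

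Adding $\frac{1}{2s}\|x_{k+1}-x_k\|^2-\min\Theta$ to both sides and rearranging, the definition $W_k=\Theta(x_k)-\min\Theta+\frac{1}{2s}\|x_k-x_{k-1}\|^2$ immediately yields the announced inequality
\[
W_{k+1}-W_k\leq -\frac{1-\alpha_k^2}{2s}\|x_k-x_{k-1}\|^2.
\]
The monotone-decrease statement then follows from the sign of $1-\alpha_k^2$: since $\alpha_k=1-\alpha/k$, we have $|\alpha_k|\le 1$ as soon as $k\ge \alpha/2$, so the right-hand side is nonpositive and $W_{k+1}\le W_k$.

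There is no real obstacle here: the main (and only) technical point is carrying out the algebraic cancellation of the cross term in a clean way; in particular one must be careful to keep track of the factor $\alpha_k$ in $y_k-x_k$ versus the factor $1$ in $y_k-x_{k+1}=sG_s(y_k)$, which is precisely what produces the asymmetric coefficients $\alpha_k^2$ and $1$ in front of $\|x_k-x_{k-1}\|^2$ and $\|x_{k+1}-x_k\|^2$. Once this identity is established, the passage to $W_k$ is immediate.
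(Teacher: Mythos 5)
Your proof is correct and takes essentially the same route as the paper: apply the descent rule of Lemma \ref{lm.properties_G_s}$(i)$ with $y=y_k$, $x=x_k$, substitute $y_k-x_k=\alpha_k(x_k-x_{k-1})$ and $sG_s(y_k)=\alpha_k(x_k-x_{k-1})-(x_{k+1}-x_k)$, and let the cross terms $\pm\frac{\alpha_k}{s}\langle x_{k+1}-x_k,\,x_k-x_{k-1}\rangle$ cancel, which is exactly the paper's computation (there written by expanding $\langle G_s(y_k),y_k-x_k\rangle$ and $\|G_s(y_k)\|^2$ separately). Your closing observation that nonpositivity of the right-hand side, hence the nonincreasing property of $(W_k)$, requires $|\alpha_k|\leq 1$, i.e.\ $k\geq \alpha/2$, is in fact a precision the paper leaves implicit.
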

\begin{proof}
By applying formula (\ref{eq.property_Theta}) with $y=y_k$ and $x=x_k$, we obtain
\begin{equation}\label{eq.majo_theta(x_k+1)}
\Theta(x_{k+1})=\Theta(y_k-sG_s(y_k))\leq \Theta(x_k)+\langle G_s(y_k),y_k-x_k\rangle -\frac{s}{2}\|G_s(y_k)\|^2.
\end{equation}
Reformulate the second member of \eqref{eq.majo_theta(x_k+1)} using the sequence $ (x_k) $. By  definition of $G_s$ and  $\mbox{\rm(IFB)}_{\alpha}$ we have
\begin{eqnarray*}
\langle G_s(y_k),y_k-x_k\rangle&=& -\frac{1}{s}\langle x_{k+1}-y_k,y_k-x_k\rangle\\
&=&-\frac{1}{s}\langle x_{k+1}-x_k-\alpha_k(x_k-x_{k-1}),\alpha_k(x_k-x_{k-1})\rangle\\
&=&-\frac{\alpha_k}{s}\langle x_{k+1}-x_k,x_k-x_{k-1}\rangle+\frac{\alpha_k^2}{s}\|x_k-x_{k-1}\|^2
\end{eqnarray*}
and
\begin{eqnarray*}
\|G_s(y_k)\|^2&=&\frac{1}{s^2}\|x_{k+1}-y_k\|^2=\frac{1}{s^2}\|x_{k+1}-x_k-\alpha_k(x_k-x_{k-1})\|^2\\
&=&\frac{1}{s^2}\|x_{k+1}-x_k\|^2+\frac{\alpha_k^2}{s^2}\|x_k-x_{k-1}\|^2-\frac{2\alpha_k}{s^2}\langle x_{k+1}-x_k,x_k-x_{k-1}\rangle.
\end{eqnarray*}
In view of (\ref{eq.majo_theta(x_k+1)}), we obtain
$$\Theta(x_{k+1})\leq \Theta(x_k)+\frac{\alpha_k^2}{2s}\|x_k-x_{k-1}\|^2-\frac{1}{2s}\|x_{k+1}-x_{k}\|^2,$$
which can be equivalently rewritten as \eqref{eq.energy_decay}.
\end{proof}

\subsection{Anchor}
As a fundamental tool, we will use the distance to equilibria  to anchor the trajectory to the
solution set $S=\argmin \Theta$. 
To this end, given  $z\in \argmin \Theta$, recall that   $h_k=\demi \|x_k-z\|^2$. The next result can be found for example in \cite{AC2}. It plays a central role in our Lyapunov analysis of  $\mbox{\rm(IFB)}_{\alpha}$.
\begin{proposition}
\label{pr.h_k+1-h_k-alpha_k(h_k-h_k-1)}
Under $(H)$, we have 
\begin{equation}\label{eq.h_k+1-h_k-alpha_k(h_k-h_k-1)_egalite}
h_{k+1}-h_k-\alpha_k(h_k-h_{k-1})\leq \demi (\alpha_k^2+\alpha_k)\|x_k-x_{k-1}\|^2-s(\Theta(x_{k+1})-\min \Theta).
\end{equation}
\end{proposition}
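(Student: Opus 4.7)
The plan is to apply the descent inequality of Lemma \ref{lm.properties_G_s}(i) with the anchor point $z\in\argmin\Theta$, and then convert the resulting inner product into the telescoping quantity $h_{k+1}-h_k-\alpha_k(h_k-h_{k-1})$ by two applications of the polarization identity (first around $y_k$, then around $x_k$). All the hard work is already contained in Lemma \ref{lm.properties_G_s}(i); what remains is purely an algebraic rearrangement.

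First, since $x_{k+1}=y_k-sG_s(y_k)$, applying \eqref{eq.property_Theta} with $y=y_k$ and $x=z$ gives
\[
\Theta(x_{k+1})-\min\Theta\leq \langle G_s(y_k),y_k-z\rangle -\frac{s}{2}\|G_s(y_k)\|^2.
\]
Multiplying by $s$ and using $sG_s(y_k)=y_k-x_{k+1}$, this reads
\[
s(\Theta(x_{k+1})-\min\Theta)\leq \langle y_k-x_{k+1},y_k-z\rangle -\tfrac{1}{2}\|y_k-x_{k+1}\|^2.
\]
The polarization identity $\|x_{k+1}-z\|^2=\|y_k-z\|^2-2\langle y_k-x_{k+1},y_k-z\rangle+\|y_k-x_{k+1}\|^2$ collapses the right-hand side to $\tfrac{1}{2}\|y_k-z\|^2-h_{k+1}$, yielding
\[
h_{k+1}+s(\Theta(x_{k+1})-\min\Theta)\leq \tfrac{1}{2}\|y_k-z\|^2.
\]

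Second, I would expand $\tfrac{1}{2}\|y_k-z\|^2$ using the definition $y_k=x_k+\alpha_k(x_k-x_{k-1})$:
\[
\tfrac{1}{2}\|y_k-z\|^2=h_k+\alpha_k\langle x_k-z,x_k-x_{k-1}\rangle+\tfrac{\alpha_k^2}{2}\|x_k-x_{k-1}\|^2.
\]
A second use of the polarization identity, this time for $\|x_k-x_{k-1}\|^2$, gives $\langle x_k-z,x_k-x_{k-1}\rangle=h_k-h_{k-1}+\tfrac{1}{2}\|x_k-x_{k-1}\|^2$. Substituting and combining the $\|x_k-x_{k-1}\|^2$ terms, we get
\[
\tfrac{1}{2}\|y_k-z\|^2=h_k+\alpha_k(h_k-h_{k-1})+\tfrac{1}{2}(\alpha_k^2+\alpha_k)\|x_k-x_{k-1}\|^2.
\]
Plugging this into the previous bound and moving $h_k+\alpha_k(h_k-h_{k-1})$ to the left-hand side gives exactly \eqref{eq.h_k+1-h_k-alpha_k(h_k-h_k-1)_egalite}.

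The only subtlety is bookkeeping: one must be careful to apply the polarization identity in the right direction and to keep track of the sign of the term $\tfrac{1}{2}\|y_k-x_{k+1}\|^2$, which cancels exactly after the first expansion (this cancellation is the reason the factor $\tfrac{s}{2}\|G_s(y_k)\|^2$ does not appear in the final inequality). No Lipschitz or stepsize hypothesis on $\Phi$ is needed at this stage beyond what is required for Lemma \ref{lm.properties_G_s}.
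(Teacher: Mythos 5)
Your proof is correct and follows essentially the same route as the paper's: both apply Lemma \ref{lm.properties_G_s}$(i)$ with $y=y_k$, $x=z$, expand $\|y_k-z\|^2$ into $h_k+\alpha_k(h_k-h_{k-1})+\frac{1}{2}(\alpha_k^2+\alpha_k)\|x_k-x_{k-1}\|^2$, and use $x_{k+1}-y_k=-sG_s(y_k)$ so that the $\frac{1}{2}\|x_{k+1}-y_k\|^2$ terms cancel, merely assembling these identities in the opposite order (the paper computes $H_k=h_{k+1}-h_k-\alpha_k(h_k-h_{k-1})$ first and invokes the descent inequality last). Your remark on the exact cancellation of $\frac{s^2}{2}\|G_s(y_k)\|^2$ matches what happens in the paper's final step.
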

\begin{proof}
Observe that
\begin{eqnarray*}
\|y_k-z\|^2&=&\|x_k+\alpha_k(x_k-x_{k-1})-z\|^2\\
&=&\|x_k-z\|^2+\alpha_k^2\|x_k-x_{k-1}\|^2+2\alpha_k\langle x_k-z,x_k-x_{k-1}\rangle\\
&=&\|x_k-z\|^2+\alpha_k^2\|x_k-x_{k-1}\|^2 \\
&+&\alpha_k \|x_k-z\|^2+ \alpha_k\|x_k-x_{k-1}\|^2-\alpha_k\|x_{k-1}-z\|^2\\
&=&\|x_k-z\|^2+\alpha_k(\|x_k-z\|^2-\|x_{k-1}-z\|^2)+(\alpha_k^2+\alpha_k)\|x_k-x_{k-1}\|^2\\
&=&2[h_k+\alpha_k(h_k-h_{k-1})]+(\alpha_k^2+\alpha_k)\|x_k-x_{k-1}\|^2.
\end{eqnarray*}
Setting briefly $H_k=h_{k+1}-h_k-\alpha_k(h_k-h_{k-1}) $, we deduce that
\begin{eqnarray*}
H_k&=&\demi\|x_{k+1}-z\|^2-\demi \|y_k-z\|^2+\demi (\alpha_k^2+\alpha_k)\|x_k-x_{k-1}\|^2\\
&=&\left\langle x_{k+1}-y_k,\demi(x_{k+1}+y_k)-z\right\rangle+\demi(\alpha_k^2+\alpha_k)\|x_k-x_{k-1}\|^2\\
&=&\langle x_{k+1}-y_k,y_k-z\rangle+\demi \|x_{k+1}-y_k\|^2+\demi(\alpha_k^2+\alpha_k)\|x_k-x_{k-1}\|^2.
\end{eqnarray*}
Using the equality $x_{k+1}=y_k-sG_s(y_k)$, we obtain (\ref{eq.h_k+1-h_k-alpha_k(h_k-h_k-1)_egalite}).
Inequality (\ref{eq.property_Theta}) applied with $y=y_k$ and $x=z$ yields
$$\Theta(x_{k+1})=\Theta(y_k-sG_s(y_k))\leq \Theta(z)+\langle G_s(y_k),y_k-z\rangle -\frac{s}{2}\|G_s(y_k)\|^2.$$
Since $\Theta(z)=\min \Theta$, we infer that
$$-s\langle G_s(y_k),y_k-z\rangle +\frac{s^2}{2}\|G_s(y_k)\|^2\leq -s(\Theta(x_{k+1})-\min \Theta),$$
which completes the proof of Proposition \ref{pr.h_k+1-h_k-alpha_k(h_k-h_k-1)}.
\end{proof}

\subsection{Proof of Theorem \ref{fast-algo}}
\begin{proof}
Let us evaluate from above $\mathcal{E}_{k+1} -\mathcal{E}_k$,
where $\mathcal{E}_k$ is given by \eqref{E:basic-Lyap-f}. We have 
\begin{equation} \label{E:Lyap-10}
\mathcal{E}_{k+1} -\mathcal{E}_k= s^p A_k + s^{\frac{p-1}{2}}B_k + C_k
\end{equation}
where
\begin{align*} 
A_k &= (k+1)^{2p} W_{k+1} -k^{2p} W_k \\
 B_k &= \lambda_{k+1} (k+1)^p(h_{k+1} - h_{k}) -  \lambda_k k^p(h_k - h_{k-1}) \\ 
 C_k &= (\lambda_{k+1}^2 + \xi_{k+1})h_{k} -  (\lambda_k^2 + \xi_k)h_{k-1} .     
\end{align*}
Let us examine successively each of the distinctive terms  $A_k, B_k, C_k$. We have
\begin{equation} \label{E:Lyap-11}
A_k = ((k+1)^{2p}-k^{2p}) W_{k+1} +
k^{2p} (W_{k+1}-W_k ).
\end{equation}
When $p \geq \frac{1}{2}$ we have
$(k+1)^{2p}-k^{2p} \leq 2p (k+1)^{2p-1}  $, and
when $p \leq \frac{1}{2}$ 
we have
$(k+1)^{2p}-k^{2p} \leq 2p k^{2p-1}  $. These are equivalent quantities, which lead to similar computations. Thus, in the following we suppose that $p \geq \frac{1}{2}$. The analysis in the case $p \leq \frac{1}{2}$  is quite similar. Hence we have  $(k+1)^{2p}-k^{2p} \leq 2p (k+1)^{2p-1}  $.
From Proposition \ref{pr.energy_decay} we have 
$W_{k+1}-W_k\leq -\frac{1-\alpha_k^2}{2s}\|x_k-x_{k-1}\|^2.$
Putting these two results together, and by definition of $W_{k+1}$,  we infer
\begin{equation} \label{E:Lyap-12}
A_k \leq  2p (k+1)^{2p-1}  (   \Theta(x_{k+1})-\min \Theta + \frac{1}{2s}\|x_{k+1}-x_{k}\|^2) -\frac{1-\alpha_k^2}{2s}k^{2p} \|x_k-x_{k-1}\|^2 .
\end{equation} 
Let us now consider $B_k$.
\begin{equation*} 
B_k = \lambda_{k+1} (k+1)^p(h_{k+1} - h_{k}) -  
\alpha_k ( h_{k} - h_{k-1} ))+
( \alpha_k   \lambda_{k+1} (k+1)^p             -\lambda_k k^p )(h_k - h_{k-1}).
\end{equation*}
From Proposition 
\ref{pr.h_k+1-h_k-alpha_k(h_k-h_k-1)}, and $0 \leq \alpha_k \leq 1$,
we infer
\begin{equation} \label{E:Lyap-13}
B_k \leq  \lambda_{k+1} (k+1)^p \left( \|x_k-x_{k-1}\|^2-s(\Theta(x_{k+1})-\min \Theta)    \right)+
( \alpha_k   \lambda_{k+1} (k+1)^p             -\lambda_k k^p )(h_k - h_{k-1}).
\end{equation}
Let us finally consider $C_k$.
\begin{equation} \label{E:Lyap-14}
C_k = (\lambda_{k+1}^2 + \xi_{k+1})(h_{k} - h_{k-1}) +( (\lambda_{k+1}^2 + \xi_{k+1})- (\lambda_k^2 + \xi_k))h_{k-1}.
\end{equation}
Let us put together \eqref{E:Lyap-12}, \eqref{E:Lyap-13}, and  \eqref{E:Lyap-14}. We obtain
\begin{align*} 
\mathcal{E}_{k+1} -\mathcal{E}_k &\leq 2p s^p(k+1)^{2p-1}  (   \Theta(x_{k+1})-\min \Theta + \frac{1}{2s}\|x_{k+1}-x_{k}\|^2) -\frac{1-\alpha_k^2}{2s}s^p
k^{2p}\|x_k-x_{k-1}\|^2 \\
&  +\lambda_{k+1} (k+1)^p s^{\frac{p-1}{2}}\left( \|x_k-x_{k-1}\|^2-s(\Theta(x_{k+1})-\min \Theta)    \right)+
s^{\frac{p-1}{2}}( \alpha_k   \lambda_{k+1} (k+1)^p  -\lambda_k k^p )(h_k - h_{k-1})    \\
&+(\lambda_{k+1}^2 + \xi_{k+1})(h_{k} - h_{k-1}) +( (\lambda_{k+1}^2 + \xi_{k+1})- (\lambda_k^2 + \xi_k))h_{k-1} .
\end{align*}
Let us reorganize the above expression in a parallel way to the formula below obtained in the continuous case, and that we recall below 
\begin{equation*} 
\begin{array}{lll}
\frac{d}{dt}\mathcal{E}_{\lambda, \xi}^{p}(t)&\leq& t^p \left[ 2p  t^{p-1}-\lambda(t))\right] \left(  \Phi(x(t))-\min_{\mathcal H} \Phi\right)  
+ \left[ \xi (t)+t^{p}\dot{\lambda}(t)-(\alpha -p)t^{p-1}\lambda(t)+\lambda(t)^{2}\right] \dot{h}(t) 
\\
&-& t^{p}\left[ (\alpha -p)t^{p-1}-\lambda(t)\right] \Vert \dot{x}(t)\Vert^{2} + \left[ \lambda(t)\dot{\lambda}(t)+\frac{\dot{\xi}(t)}{2}\right] \Vert x(t)-z\Vert^{2}. 
\end{array}
\end{equation*}
We obtain
\begin{align} \label{E:basic-ineq-algo1}
\mathcal{E}_{k+1} -\mathcal{E}_k &\leq 
(k+1)^p \left[ 2p s^p (k+1)^{p-1}-s^{\frac{p+1}{2}}\lambda_{k+1}\right] \left(  \Theta(x_{k+1})-\min \Theta \right) \\
&  +\left[ \lambda_{k+1}^2 + \xi_{k+1} +s^{\frac{p-1}{2}} (\alpha_k   \lambda_{k+1} (k+1)^p  -\lambda_k k^p) \right] (h_{k} - h_{k-1})\nonumber \\
&+\left[\lambda_{k+1} (k+1)^p s^{\frac{p-1}{2}} -\frac{1-\alpha_k^2}{2s} s^p k^{2p} \right] \|x_k-x_{k-1}\|^2 + \frac{p}{s^{1-p}}(k+1)^{2p-1} \|x_{k+1}-x_{k}\|^2 \nonumber\\
&+\left[ (\lambda_{k+1}^2 + \xi_{k+1})- (\lambda_k^2 + \xi_k)\right]h_{k-1}.\nonumber
\end{align}
Let us  analyze successively  the four terms which enter the second member of \eqref{E:basic-ineq-algo1}:

\smallskip

1) Let us make the two first terms of the second member
of \eqref{E:basic-ineq-algo1} equal to zero by taking
respectively
\begin{equation}\label{E:basic-ineq-algo2}
\lambda_{k}= \frac{2p}{s^{\frac{1-p}{2}}}k^{p-1}
\end{equation}
and 
$$
\xi_{k+1}= - \lambda_{k+1}^2  - s^{\frac{p-1}{2}} (\alpha_k   \lambda_{k+1} (k+1)^p  -\lambda_k k^p).
$$
This last equality gives equivalently
$$
\xi_{k+1}= -\lambda_{k+1}^2 - s^{\frac{p-1}{2}} \alpha_k   \lambda_{k+1} (k+1)^p  + s^{\frac{p-1}{2}}\lambda_k k^p.
$$
By \eqref{E:basic-ineq-algo2} we deduce that
\begin{align*}
\xi_{k+1}&= -\frac{4p^2}{s^{1-p}}(k+1)^{2p-2} - \frac{2p}{s^{1-p}} (1-\frac{\alpha}{k}) (k+1)^{2p -1} + \frac{2p}{s^{1-p}} k^{2p-1}\\
&= \frac{2p}{s^{1-p}} \left[-2p (k+1)^{2p-2} -(1-\frac{\alpha}{k}) (k+1)^{2p -1} + k^{2p-1}  \right]:= \frac{2p}{s^{1-p}}  D_k .
\end{align*}
Let us analyze the sign of $D_k$ by making an asymptotic development.
\begin{align}\label{E: dk}
D_k&= -2p (k+1)^{2p-2} - \left[ (k+1)^{2p -1} - k^{2p-1} \right]  + \frac{\alpha}{k} (k+1)^{2p -1} \\
&= -2p k^{2p-2}(1+\frac{1}{k} )^{2p-2} - k^{2p-1} \left[ (1+ \frac{1}{k})^{2p -1} - 1 \right]  + \alpha k^{2p -2} (1+\frac{1}{k})^{2p-1}\nonumber \\
&= -2p k^{2p-2}\left[ 1 + \frac{2p-2}{k} +\mbox{{\tiny O}}(\frac{1}{k})\right] - k^{2p-1} \left[ \frac{2p-1}{k} + \mbox{{\tiny O}}(\frac{1}{k})\right]  + \alpha k^{2p -2} \left[ 1 + \frac{2p-1}{k} + \mbox{{\tiny O}}(\frac{1}{k})\right] \nonumber \\
&  \sim (\alpha -4p +1  ) k^{2p-2}. \nonumber
\end{align}
Hence, the condition $\xi_k \geq 0$ is satisfied for
$$
p < \frac{\alpha + 1}{4}.
$$
It is the same as in the continuous case, except that now the inequality is  assumed to be strict.

\smallskip

2) Let us now consider
\begin{align*}
F_k:=& \left[\lambda_{k+1} (k+1)^p s^{\frac{p-1}{2}} -\frac{1-\alpha_k^2}{2s} s^p k^{2p} \right] \|x_k-x_{k-1}\|^2 + \frac{p}{s^{1-p}}(k+1)^{2p-1} \|x_{k+1}-x_{k}\|^2 \\
=& \left[\lambda_{k+1} (k+1)^p s^{\frac{p-1}{2}} -\frac{1-\alpha_k^2}{2s} s^p k^{2p} + \frac{p}{s^{1-p}}(k+1)^{2p-1}\right] \|x_k-x_{k-1}\|^2 \\
&+ \frac{p}{s^{1-p}}(k+1)^{2p-1}\left[ \|x_{k+1}-x_{k}\|^2 - \|x_k-x_{k-1}\|^2 \right] \\
\leq & \left[\lambda_{k+1} (k+1)^p s^{\frac{p-1}{2}} -\frac{1-\alpha_k^2}{2s} s^p k^{2p} + \frac{p}{s^{1-p}}(k+1)^{2p-1}\right] \|x_k-x_{k-1}\|^2 \\
&+ \frac{p}{s^{1-p}}\left[ (k+1)^{2p-1}\|x_{k+1}-x_{k}\|^2 - k^{2p-1}\|x_k-x_{k-1}\|^2 \right],
\end{align*}
where in the last inequality we have used $k^{2p-1} \leq (k+1)^{2p-1}  $ for $p \geq \frac{1}{2}$.
By \eqref{E:basic-ineq-algo2}, and the definition of $\alpha_k$ we have
\begin{align*}
\lambda_{k+1} (k+1)^p s^{\frac{p-1}{2}} -\frac{1-\alpha_k^2}{2s} s^p k^{2p} + \frac{p}{s^{1-p}}(k+1)^{2p-1}&\sim  \frac{2p}{s^{\frac{1-p}{2}}}(k+1)^{p-1} (k+1)^p s^{\frac{p-1}{2}} -\frac{1}{2s} \frac{2\alpha}{k}s^p k^{2p} + \frac{p}{s^{1-p}}(k+1)^{2p-1}\\
 & \sim \frac{1}{s^{1-p}}\left[3p - \alpha        \right] k^{2p-1}.
\end{align*}
As a consequence, for $p < \frac{\alpha}{3}$ we have 
$$F_k \leq \frac{p}{s^{1-p}}\left[ (k+1)^{2p-1}\|x_{k+1}-x_{k}\|^2 - k^{2p-1}\|x_k-x_{k-1}\|^2 \right].$$

\smallskip

3) Let us finally consider
$$
G_k:= (\lambda_{k+1}^2 + \xi_{k+1})- (\lambda_k^2 + \xi_k)= (\lambda_{k+1}^2 - \lambda_k^2 )+ (\xi_{k+1}-  \xi_k).
$$
Let us compute an equivalent of the discrete derivative $\lambda_{k+1} - \lambda_k $.
We have
\begin{align*}
\lambda_{k+1} - \lambda_k&= \frac{2p}{s^{\frac{1-p}{2}}} \left( (k+1)^{p-1} -k^{p-1} \right)\\
&= \frac{2p}{s^{\frac{1-p}{2}}} k^{p-1} \left( (1+\frac{1}{k})^{p-1} -1 \right)\\
&  \sim \frac{2p(p-1)}{s^{\frac{1-p}{2}}} k^{p-2}.
\end{align*}
Hence
\begin{align*}
\lambda_{k+1}^2 - \lambda_k^2&= (\lambda_{k+1} + \lambda_k )(\lambda_{k+1} - \lambda_k )\\
& \sim \frac{2p(p-1)}{s^{\frac{1-p}{2}}} k^{p-2} \times 2\frac{2p}{s^{\frac{1-p}{2}}}k^{p-1} \\
& \sim \frac{8p^2 (p-1)}{s^{1-p}}k^{2p-3} 
\end{align*}
On the other hand, from $\xi_{k+1} = \frac{2p}{s^{1-p}}  D_k $ and $D_k  \sim (\alpha -4p +1  ) k^{2p-2}$
(see \eqref{E: dk}) we immediately infer

$$\xi_{k+1}-  \xi_k  
  \sim \frac{2p(2p-2)}{s^{1-p}}(\alpha -4p +1  ) k^{2p-3}.$$
Hence (when the equivalent is not zero!)
\begin{align*}
G_k & \sim \frac{8p^2 (p-1)}{s^{1-p}}k^{2p-3}
+ \frac{2p(2p-2)}{s^{1-p}}(\alpha -4p +1  ) k^{2p-3}\\
&= 4p(p-1)(\alpha - 2p +1 )k^{2p-3}. 
\end{align*}
The same argument as in the continuous case leads us to assume $0 < p<1$. The condition $\alpha - 2p +1 >0$ is already implied by the previous assumption
$p \leq \frac{\alpha + 1}{4}.$\\
We have now all the ingredients to conclude. 
By taking $p < \min\{1, \frac{\alpha}{3}\}$, we have obtained that for $k$ sufficiently large
$$
\mathcal{E}_{k+1}-\mathcal{E}_k \leq \frac{p}{s^{1-p}} (k+1)^{2p-1}\|x_{k+1}-x_{k}\|^2 - \frac{p}{s^{1-p}}k^{2p-1}\|x_k-x_{k-1}\|^2 .
$$
Equivalently $\mathcal{E}_k - \frac{p}{s^{1-p}}k^{2p-1}\|x_k-x_{k-1}\|^2 $ is non-increasing, and hence is bounded from above. Returning to the definition of $\mathcal{E}_k$, this gives
\begin{align*} 
s^pk^{2p} (\Theta(x_k)-\min \Theta) &+    \demi s^{\frac{p-1}{2}}\lambda_k k^p( \|x_k-z\|^2 -  \|x_{k-1}-z\|^2)+ \frac{\lambda_k^2}{2} \|x_{k-1}-z\|^2 +  \frac{\xi_k}{2} \|x_{k-1}-z\|^2 \\
&+  s^{p-1}\frac{k^{2p}}{2} \|x_k-x_{k-1}\|^2 \left[ 1- \frac{2p}{k} \right]\leq C
\end{align*}
for some positive constant $C$.
From the definition of  $\lambda_k = \frac{2p}{s^{\frac{1-p}{2}}}k^{p-1} $ and $\xi_k \sim \frac{2p}{s^{1-p}}(\alpha -4p +1  ) k^{2p-2} $ (recall that $\alpha -4p +1 >0$), we deduce that, for some $\epsilon >0$, and $k$ large enough
$$\xi_k \geq \epsilon \lambda_k^2 . $$ 
On the other hand, for $k$ large enough
$$
s^{p-1}\frac{k^{2p}}{2} \|x_k-x_{k-1}\|^2 \left[ 1- \frac{2p}{k} \right] \geq \frac{1}{2(1+ \epsilon)} s^{p-1}k^{2p} \|x_k-x_{k-1}\|^2  .
$$
Combining the above results, and using
$$
\|x_k-z\|^2 -  \|x_{k-1}-z\|^2 \geq 
2\langle x_{k-1}-z, x_k-x_{k-1}\rangle ,
$$
 we obtain
\begin{align*} 
C &\geq  s^pk^{2p} (\Theta(x_k)-\min \Theta)+  \frac{1}{2} \left[\frac{s^{p-1}k^{2p}}{1+ \epsilon}  \|x_k-x_{k-1}\|^2  +  2 s^{\frac{p-1}{2}} \lambda_k k^p\langle  x_k-x_{k-1}, x_{k-1}-z\rangle + (1+ \epsilon)\lambda_k^2 \|x_{k-1}-z\|^2 \right]\\
& = s^p k^{2p} (\Theta(x_k)-\min \Theta)+
\frac{1}{2} \| \frac{1}{\sqrt{1 + \epsilon}} s^{\frac{p-1}{2}}k^p (x_k-x_{k-1}) +\sqrt{1 + \epsilon}\lambda_k (x_{k-1} -z) \|^2 .
\end{align*}
This implies
$$
\Theta(x_k)-\min \Theta = \mathcal O \left(   \frac{1}{k^{2p}}\right).
$$
Hence, for $\alpha \leq 3$, for all $p< \frac{2\alpha}{3} $ we have obtained
$$
\Theta(x_k)-\min \Theta = \mathcal O \left(   \frac{1}{k^{p}}\right).
$$
Since $p$ has been taken greater or equal than $\frac{1}{2}$, this corresponds to assume $\alpha \geq \frac{3}{4}$.
When $p \leq \frac{1}{2}$, on the basis of the inequality $(k+1)^{2p}-k^{2p} \leq 2p k^{2p-1}  $, we are led to take 
\begin{equation}\label{E:basic-ineq-algo2-b}
\lambda_{k}= \frac{2p}{s^{\frac{1-p}{2}}}\frac{(k-1)^{2p-1}}{k^p}\sim \frac{2p}{s^{\frac{1-p}{2}}} k^{p-1},
\end{equation}
which is an equivalent expression as in the case $p \leq \frac{1}{2}$. Since the proof is based on asymptotic equivalences, it works in a similar way, which completes the proof in the case  $\alpha \leq \frac{3}{4}$.
\end{proof}

 \begin{remark}
Our  proof uses computation based on asymptotic equivalences. This makes the proof simpler, but on the  other hand we lose some information with respect to the continuous case. We have obtained  the order of convergence $\mathcal O \left(   \frac{1}{k^{p}}\right)$ for all 
$p< \frac{2\alpha}{3} $ instead of $\frac{2\alpha}{3} $. 
It might be tempting to pass to the limit as $p \to  \frac{2\alpha}{3} $ on the inequality
$$
\mathcal{\tilde{E}}_{k,p} \leq \mathcal{\tilde{E}}_{k_0,p} \quad \mbox{ for} \quad k \geq k_0
$$
where
$$
\mathcal{\tilde{E}}_{k,p}= \ s^pk^{2p} W_k+ s^{\frac{p-1}{2}}\lambda_{k,p} k^p(h_k - h_{k-1})+ (\lambda_{k,p}^2 + \xi_{k,p})h_{k-1}  - \frac{p}{s^{1-p}}(k)^{2p-1}\|x_k-x_{k-1}\|^2
$$
and $ \lambda_{k,p}= \frac{2p}{s^{\frac{1-p}{2}}}k^{p-1}
$, 
 $\xi_{k+1,p}= - \lambda_{k+1,p}^2  - s^{\frac{p-1}{2}} (\alpha_k   \lambda_{k+1,p} (k+1)^p  -\lambda_{k,p} k^p)$.
But this would not be a correct argument: while $\mathcal{\tilde{E}}_{k,p}$ is actually continuous with respect to $p$, it is possible that at the same time, in the above inequality,  the index $k_0$ tends to $+\infty$!
However, on the basis of the continuous results, it is probable that the same order of convergence as in the continuous case is valid, that is to say $\frac{2\alpha}{3} $. 
 \end{remark} 
 
 \subsection{Convergence of iterates. The critical case $\alpha =3$}
 In the case $\alpha >3 $, as a major result, the weak convergence of the sequences generated by $\mbox{\rm(IFB)}_{\alpha}$ was obtained by
 Chambolle and Dossal \cite{CD}, and completed by Attouch-Chbani-Peypouquet-Redont  \cite{ACPR}.
 The case $\alpha =3$ corresponds  to the accelerated method of Nesterov, and   Beck-Teboulle FISTA method. In this case, the convergence of iterations is still an open problem. In line with the results obtained in the continuous case, we give  partial answers to this question. As a main result, we prove the convergence in the one-dimensional case. We therefore consider 
 $\mbox{\rm(IFB)}_{\alpha}$ with $\alpha =3$
$$\mbox{\rm(IFB)}_{3} \quad\left\{ 
\begin{array}{l}
y_k=x_k+(1 - \frac{3}{k})(x_k-x_{k-1})\\
\rule{0pt}{10pt}
x_{k+1}=\prox_{s\Psi}(y_k-s\nabla \Phi(y_k)).
\end{array}
\right.$$

\smallskip

\noindent In the case $\alpha =3$, the Lyapunov function takes the simpler form
\begin{equation}\label{algo9b}
\mathcal E (k):= s \left( k + 1\right)^2    (\Theta (x_k) - \min \Theta) + 2\| x_{k} -x^{*} + \frac{k -1}{2} ( x_{k}  - x_{k-1})  \|^2,
\end{equation}
where $x^{*} \in S$. In \cite{ACPR}, \cite{AP}, \cite{CD}, \cite{SBC} it is proved that  the sequence $(\mathcal E(k))$ is nonincreasing, and $\Theta (x_k) - \min \Theta = \mathcal O \left( \frac{1}{k^2}   \right) $.
From this we can deduce the following estimates of the sequence $(x_k)$, which are valid in a general Hilbert space.

 \begin{proposition}\label{critical-boundedness}
Let $\cH$  a  Hilbert space. Let  $(x_k)$ be a sequence  generated by algorithm  $\mbox{\rm(IFB)}_{\alpha}$  with $\alpha = 3$. Then 
$$
\sup_k \|x_k\| < + \infty  \mbox{ and }\  \sup_k  k \|x_k-x_{k-1}\| < + \infty .
$$
\end{proposition}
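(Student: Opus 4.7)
The starting point is the Lyapunov function $\mathcal E(k)$ defined in \eqref{algo9b}. As recalled just above the statement, $(\mathcal E(k))$ is nonincreasing, so for all $k$ large enough there is a constant $M>0$ (depending on $\mathcal E(k_0)$ at some initial index $k_0$) with
$$\Bigl\|\,x_k-x^*+\frac{k-1}{2}(x_k-x_{k-1})\Bigr\| \leq M.$$
Setting $z_k := x_k-x^*$ and noting that $x_k-x_{k-1}=z_k-z_{k-1}$, the above inequality rewrites equivalently as
$$\bigl\|(k+1)z_k-(k-1)z_{k-1}\bigr\|\leq 2M. \qquad(\star)$$
The whole proposition will follow from $(\star)$ by elementary manipulations.

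To obtain boundedness of $(x_k)$, I would introduce the auxiliary sequence $w_k:=k(k+1)z_k$, chosen so that telescoping produces the left-hand side of $(\star)$. Indeed, a direct computation gives $w_k-w_{k-1}=k\bigl[(k+1)z_k-(k-1)z_{k-1}\bigr]$, so by $(\star)$, $\|w_k-w_{k-1}\|\leq 2Mk$. Summing from some $k_0$ to $k$ yields
$$\|w_k\|\leq \|w_{k_0}\|+2M\sum_{j=k_0+1}^k j = \mathcal O(k^2),$$
and dividing by $k(k+1)$ shows that $\|z_k\|=\|x_k-x^*\|$ is bounded, which gives $\sup_k \|x_k\|<+\infty$.

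The second estimate is then immediate. Writing $(k+1)z_k-(k-1)z_{k-1}=(k-1)(z_k-z_{k-1})+2z_k=(k-1)(x_k-x_{k-1})+2(x_k-x^*)$, $(\star)$ combined with the triangle inequality gives
$$(k-1)\|x_k-x_{k-1}\|\leq 2M+2\|x_k-x^*\|,$$
and the right-hand side is bounded by the first part of the proof, yielding $\sup_k k\|x_k-x_{k-1}\|<+\infty$.

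There is no serious obstacle here: the only point requiring a bit of care is the choice of the telescoping weight $k(k+1)$ in the definition of $w_k$, which is precisely tailored so that $w_k-w_{k-1}$ produces $k$ times the bounded quantity of $(\star)$; the rest is a comparison of arithmetic sums.
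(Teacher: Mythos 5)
Your proof is correct, and it takes a genuinely different route through the middle step. Both arguments start identically, from the monotonicity of $\mathcal E(k)$ defined in \eqref{algo9b}, which yields the uniform bound $\bigl\|x_k-x^*+\frac{k-1}{2}(x_k-x_{k-1})\bigr\|\leq M$, and both finish with the same triangle-inequality step to get $\sup_k k\|x_k-x_{k-1}\|<+\infty$ once $(x_k)$ is known to be bounded. Where you diverge is in extracting that boundedness: the paper expands the square $\bigl\|(x_k-x^*)+\frac{k-1}{2}(x_k-x_{k-1})\bigr\|^2\leq C$, discards the nonnegative term $\bigl\|\frac{k-1}{2}(x_k-x_{k-1})\bigr\|^2$, converts the remaining inner product into a difference of squared norms via the convexity inequality $\|x_{k-1}-x^*\|^2\geq \|x_k-x^*\|^2+2\langle (x_{k-1}-x^*)-(x_k-x^*),x_k-x^*\rangle$, and then telescopes the \emph{scalar} sequence $\frac{k-1}{2}\|x_k-x^*\|^2+\|x_k-x^*\|^2$ to obtain $\frac{k-1}{2}\|x_k-x^*\|^2+\|x_k-x^*\|^2\leq C+Ck$. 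You instead read the bound as a first-order \emph{vector-valued} recursion $\|(k+1)z_k-(k-1)z_{k-1}\|\leq 2M$ and telescope it directly with the weight $k(k+1)$ --- in effect a discrete integrating factor chosen so that $w_k-w_{k-1}=k\bigl[(k+1)z_k-(k-1)z_{k-1}\bigr]$ --- and then sum an arithmetic series. Your route is slightly cleaner: it works at the level of norms, needs no squaring and no convexity inequality, and makes the $\mathcal O(k^2)/k(k+1)$ cancellation transparent; the paper's route stays with scalar energy-type quantities, consistent with the Lyapunov style of the rest of the paper, and yields as a by-product the explicit quantitative estimate $\frac{k+1}{2}\|x_k-x^*\|^2\leq C(1+k)$. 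Both deliver the same conclusion with comparable effort, and I see no gap in your argument.
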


\begin{proof}
Since $(\mathcal E(k))$ is nonincreasing, it is bounded from above, which implies that, for some positive constant $C$
\begin{equation}\label{algo-dim1-c}
 \|  (x_{k} -x^{*})  + \frac{k -1}{2}( x_{k}  - x_{k-1}) \|^2 \leq C.
\end{equation}
 After developping the above quadratic expression , and neglecting the nonnegative term
$\|  \frac{k -1}{2}( x_{k}  - x_{k-1}) \|^2$, we obtain
$$
  \|  x_{k} -x^{*}\|^2   + (k -1)
  \left\langle  x_{k}  - x_{k-1} , x_{k} -x^{*} \right\rangle \leq C .
$$
Let us write this expression in a recursive form. Equivalently,
$$
  \|  x_{k} -x^{*}\|^2   + (k -1)
  \left\langle  (x_{k} -x^{*}) - (x_{k-1}-x^{*}) , x_{k}-x^{*} \right\rangle \leq C .
$$
Using the elementary convex inequality
$$
\|  x_{k-1} -x^{*}\|^2     \geq \|  x_{k} -x^{*}\|^2  + 2\left\langle  (x_{k-1}-x^{*}) - (x_{k} -x^{*})  , x_{k}-x^{*} \right\rangle
$$
we infer
$$
  \|  x_{k} -x^{*}\|^2   + \frac{k -1}{2}
 \|  x_{k}  - x^{*}\|^2 - \frac{k -1}{2} \|  x_{k-1} -x^{*}\|^2 \leq C .
$$
Equivalently
$$
\frac{k -1}{2}
 \|  x_{k}  - x^{*}\|^2 - \frac{k -2}{2}
 \|  x_{k-1}  - x^{*}\|^2 + \|  x_{k} -x^{*}\|^2 -   \frac{1}{2} \|  x_{k-1}  - x^{*}\|^2\leq C.
$$
Hence
$$
\left(\frac{k -1}{2}
 \|  x_{k}  - x^{*}\|^2  +  \|  x_{k} -x^{*}\|^2 \right) - \left(\frac{k -2}{2}
 \|  x_{k-1}  - x^{*}\|^2 + \|  x_{k-1}  - x^{*}\|^2 \right)\leq C.
$$
Summing these inequalities, we infer
$$
\frac{k -1}{2}
 \|  x_{k}  - x^{*}\|^2  +  \|  x_{k} -x^{*}\|^2 \leq C + Ck,
$$
which gives the announced result.
Returning to (\ref{algo-dim1-c}),  by using the triangle inequality we immediately deduce that $\sup_k  k \|x_k-x_{k-1}\| < + \infty $.
\end{proof}

 \begin{Theorem}
Take $\cH= \mathbb R$. Let us make assumptions $(H)$.
Let  $(x_k)$ be a sequence of iterates generated by algorithm $\mbox{\rm(IFB)}_{3}$ with $\alpha = 3$. Then $(x_k)$ converges, as $t\to +\infty$, to a point in $S= \argmin \Theta $.
\end{Theorem}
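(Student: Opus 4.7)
The plan is to transpose the one-dimensional continuous argument of Section \ref{crit} to the iterative setting, splitting on the asymptotic location of $(x_k)$, with the oscillation scenario requiring most of the work.

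\medskip

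\textbf{Preparation.} The rate $\Theta(x_k)-\min \Theta = \mathcal O(1/k^2)$ valid in the critical case forces every cluster point of $(x_k)$ to lie in $S=\argmin \Theta$, while Proposition \ref{critical-boundedness} provides $\sup_k\|x_k\|<\infty$ and $\sup_k k\|x_k-x_{k-1}\|<\infty$. Since $\cH=\mathbb R$, the set $S$ is a closed interval $[a,b]$; the case $a=b$ is immediate, and if $a$ or $b$ is infinite the corresponding cases below are vacuous.

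\medskip

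\textbf{Non-oscillating cases.} If $x_k\ge b$ eventually, then the only cluster point of $(x_k)$ lying in $S\cap[b,+\infty)$ is $b$, so boundedness forces $x_k\to b$; the case $x_k\le a$ eventually is symmetric. If $x_k\in[a,b]$ eventually, we exploit that $\Theta$ is constant on $S$, which forces $\partial\Psi=\{-\nabla\Phi\}$ throughout the interior of $S$. Combined with $|x_k-x_{k-1}|=\mathcal O(1/k)$, the intermediate point $y_k$ also lies eventually in the interior of $S$, and hence the prox-gradient step collapses to $x_{k+1}=y_k$. The recursion reduces to
\begin{equation*}
x_{k+1}-x_k=\Bigl(1-\frac{3}{k}\Bigr)(x_k-x_{k-1}),
\end{equation*}
from which $|x_{k+1}-x_k|=\mathcal O(1/k^3)$ by a direct product formula, so $\sum|x_{k+1}-x_k|<\infty$ and $(x_k)$ converges.

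\medskip

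\textbf{The oscillation case.} Suppose $(x_k)$ takes values both $>b$ and $<a$ infinitely often. Mimicking the continuous loop argument, we define consecutive indices $K_n<L_n<M_n<N_n<K_{n+1}$ marking successive crossings of the levels $a$ and $b$, and track the quantity $E_k:=(k-1)|x_k-x_{k-1}|$. During the passage through the interior of $[a,b]$, the frozen recursion above yields the exact telescoping identity
\begin{equation*}
k(x_{k+1}-x_k)-(k-1)(x_k-x_{k-1})=-2(x_k-x_{k-1}),
\end{equation*}
and summing across the passage produces a strict decrement of order $2(b-a)$ for $E_k$, up to an $\mathcal O(1/k)$ overshoot error at the crossing indices. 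For an excursion outside $[a,b]$ bracketed by two near-returns to a common level of $S$, we derive a discrete counterpart of Proposition \ref{basic.dim1} by anchoring the Lyapunov function \eqref{algo9b} at that common boundary point: its nonincreasing character, together with the vanishing of the $\Theta$-term at each return, delivers $(N_n-1)|x_{N_n}-x_{N_n-1}|\le (M_n-1)|x_{M_n}-x_{M_n-1}|+o(1)$.

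\medskip

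Combining the two estimates around a complete loop gives $E_{K_{n+1}}\le E_{K_n}-2(b-a)+o(1)$, which contradicts the boundedness of $E_k$ guaranteed by Proposition \ref{critical-boundedness}. Hence the oscillation scenario cannot occur and $(x_k)$ converges to a point of $S$. The main obstacle I expect is the careful bookkeeping at the discrete crossings: iterates do not land exactly on $a$ or $b$, so one must control overshoots of size $\mathcal O(1/k)$ and verify that the Lyapunov function \eqref{algo9b}, which was designed for a fixed anchor $x^*$, can still be exploited with anchors that differ from one excursion to the next in the general proximal setting.
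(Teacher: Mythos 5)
Your proposal reproduces the paper's proof almost step for step: the same preparation via Proposition \ref{critical-boundedness}, the same trichotomy on the asymptotic location of the iterates, the same frozen linear recursion $x_{k+1}-x_k=\bigl(1-\frac{3}{k}\bigr)(x_k-x_{k-1})$ with its telescoping identity yielding a decrement of fixed size of order $2(b-a)$ per passage across $[a,b]$, and the same discrete counterpart of Proposition \ref{basic.dim1} obtained by anchoring the Lyapunov function \eqref{algo9b} at the near-return point in $S$ and dropping the nonnegative $\Theta$-term. (Two cosmetic slips: the final contradiction is with the \emph{nonnegativity} of $k|x_k-x_{k-1}|$, which a fixed decrement per loop would eventually violate, not with its boundedness from above; and a full loop decrements by about $4(b-a)$, not $2(b-a)$ — immaterial either way.)

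There is, however, one genuine gap, located exactly where your version departs from the paper's. In the third non-oscillating case you condition on $x_k\in[a,b]$ eventually and claim that $|x_k-x_{k-1}|=\mathcal O(1/k)$ forces the extrapolated point $y_k$ to lie eventually in the \emph{interior} of $S$. This inference is unjustified: $y_k=x_k+\bigl(1-\frac{3}{k}\bigr)(x_k-x_{k-1})$ overshoots, and the facts you invoke are consistent with $y_k\notin S$ infinitely often — for instance a sequence of the form $x_k=b-3^{-k}$ lies in $(a,b)$ with increments $o(1/k)$, yet gives $y_k=b+3^{-k}\bigl(1-\frac{6}{k}\bigr)>b$ for $k>6$, so at such indices the prox-gradient step does not collapse to the identity and the frozen recursion fails. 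Moreover, your route through $\partial\Psi=\{-\nabla\Phi\}$ on the interior of $S$ is both unnecessary and insufficient at the endpoints: Lemma \ref{lm.properties_G_s} $(ii)$ already gives $G_s(z)=0$ for \emph{every} $z\in S$, boundary included. The paper sidesteps your difficulty by partitioning according to the position of $y_k$ rather than $x_k$ (so that in the relevant case $G_s(y_k)=0$ holds by the lemma, with no interiority needed); note that in the oscillation case the paper itself is informal on the same point, asserting the frozen recursion from $x_k\in[a,b]$ alone. With that repair — case on $y_k$, and invoke the equivalence $z\in\argmin\Theta\Leftrightarrow G_s(z)=0$ on all of $S$ — your argument matches the paper's, including the crossing bookkeeping you correctly identify as the delicate part.
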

\begin{proof}
By Proposition  \ref{critical-boundedness},  for $\alpha =3$, the sequence $(x_k)$ is bounded, and minimizing (indeed, $\Theta (x_k) - \min \Theta = \mathcal O \left( \frac{1}{k^2}   \right) $).  As a consequence,  when $ \argmin \Theta  $ is reduced to a singleton $x^*$, it is the unique cluster point of the the sequence $(x_k)$, which implies the convergence of $(x_k)$ to $x^*$.
Thus, we just need to consider the case where 
$S=\left[a, b \right]$ is an interval with positive length.
Recall that the algorithm can be written in an equivalent way as
$$\mbox{\rm(IFB)}_{\alpha} \quad\left\{ 
\begin{array}{l}
y_k=x_k+ (1 - \frac{3}{k})(x_k-x_{k-1})\\
\rule{0pt}{10pt}
x_{k+1}=y_k-sG_s(y_k),
\end{array}
\right.$$
where the following equivalence holds true
$$
z\in \argmin \Theta\,\Longleftrightarrow\,  G_s(z)=0.
$$
Note that $y_k -x_k \to 0$, which implies that the sequences $(x_k)$  and $(y_k)$  have the same cluster points.
As for  the continuous dynamic we have to consider the following cases:

\smallskip

$\bullet$   There exists   $K \in \mathbb N$ such that
$y_k \in  \left[a, b \right]$ for all $k \geq K$. Then $G_s(y_k)=0$, which gives
$$x_{k+1}- x_k= (1 - \frac{3}{k})(x_k-x_{k-1}).$$ Solving this induction, we obtain $ 0 \leq x_{k+1}- x_k  \leq \frac {C}{k^3}$, which  implies that the sequence $(x_k )$ converges.

\smallskip

$\bullet$   There exists   $K \in \mathbb N$ such that $y_k \leq a$ for all $k \geq K$. As a consequence, $a$ is the unique cluster point of the sequence $(x_k)$  that converges to this point. Symetrically, if  $y_k \geq b$ for all $k \geq K$, then $b$ is the unique cluster point of the sequence $(x_k)$  that converges to this point.

\smallskip

$\bullet$  It remains to study the case where $a$ and $b$ are cluster points of the sequence $(y_k)$, and consequently of the sequence 
$(x_k)$. Note that $x_k-x_{k-1} \to 0$. Therefore, in parallel to the continuous case, we can find four sequences $x_{k_{n_1}}$, $x_{k_{n_2}}$, 
$x_{k_{n_3}}$, $x_{k_{n_4}}$, with $k_{n_1} < k_{n_2}< k_{n_3}< k_{n_4}$ such that, when "going from $a$ to $b$" we have
\begin{enumerate}
\item $x_{k_{n_1}} \to a$, $x_{k_{n_1}} > a$, $x_{k_{n_1} +1} - x_{k_{n_1}} > 0$;

\item $x_{k_{n_2}} \to b$, $x_{k_{n_2}} < b$, $x_{k_{n_2} +1} - x_{k_{n_2}} > 0$;

\item $x_k \in   \left[a, b \right]$ \ for all indices $k$ such that
$ k_{n_1} \leq  k \leq k_{n_2}$.
\end{enumerate}
Symetrically when "going from $b$ to $a$"
\begin{enumerate}
\item $x_{k_{n_3}} \to b$, $x_{k_{n_3}}< b$, $x_{k_{n_3} +1} - x_{k_{n_3}} < 0$;

\item $x_{k_{n_4}} \to a$, $x_{k_{n_4}} >a$, $x_{k_{n_4} +1} - x_{k_{n_4}} < 0$;

\item $x_k \in   \left[a, b \right]$ \ for all indices $k$ such that
$ k_{n_3} \leq  k \leq k_{n_4}$.
\end{enumerate}
As in the continuous  case, let us 	analyze the decay of the  quantity $|k (x_{k+1}-x_k  )|$ during a loop.
When $ k_{n_1} \leq  k \leq k_{n_2}$, we have $x_{k+1}=x_k+ (1 - \frac{3}{k})(x_k-x_{k-1})$. Equivalently
$$
(k+1) (x_{k+1}-x_k  ) - k(x_{k}-x_{k-1}  ) +3  (x_{k}-x_{k-1}  ) - (x_{k+1}-x_k  )=0.
$$
Let us sum  these equalities with $k$ varying from $ k_{n_1} $ to $ k_{n_2} -1$. We obtain
$$
k_{n_2} (x_{k_{n_2}}-x_{k_{n_2}-1}  ) -  k_{n_1} (x_{k_{n_1}}-x_{k_{n_1}-1}  ) + 3 (x_{k_{n_2}-1} - x_{k_{n_1}-1} ) -(x_{k_{n_2}} -x_{k_{n_1}})=0.
$$
 Taking account of $x_{k_{n_1}} \to a$, $x_{k_{n_2}} \to b$, $x_k-x_{k-1} \to 0$, and    of the sign of the above quantities, we deduce that,  when "going from $a$ to $b$",
\begin{equation}\label{dim1-1}
|k_{n_2} (x_{k_{n_2}}-x_{k_{n_2}-1}  )| - |k_{n_1} (x_{k_{n_1}}-x_{k_{n_1}-1}  )| \sim - 2  (b-a) .
\end{equation}
Symetrically when "going from $b$ to $a$"
\begin{equation}\label{dim1-2}
|k_{n_4} (x_{k_{n_4}}-x_{k_{n_4}-1}  )| - |k_{n_3} (x_{k_{n_3}}-x_{k_{n_3}-1}  )| \sim - 2  (b-a) .
\end{equation}
Let us now exploit the
 decreasing property of 
\begin{equation*}
\mathcal E (k):= s \left( k + 1\right)^2    (\Theta (x_k) - \min \Theta) + 2\|  x_{k} -x^{*} + \frac{k -1}{2} ( x_{k}  - x_{k-1}) \|^2.
\end{equation*}
Take  $x^{*} = x_{k_{n_2}} \in S=\left[a, b \right]$. Since $   k_{n_3} \geq k_{n_2} $, we have $\mathcal E (k_{n_3}) \leq \mathcal E (k_{n_2})  $. Equivalently,
\begin{equation*}
|  x_{k_{n_3}} -x_{k_{n_2}} + \frac{k_{n_3} -1}{2} (x_{k_{n_3}}-x_{k_{n_3} -1}) |  \leq |  \frac{k_{n_2} -1}{2} (x_{k_{n_2}}-x_{k_{n_2} -1}  ) |.
\end{equation*}
Hence
\begin{equation*}
|  (k_{n_3} -1) (x_{k_{n_3}}-x_{k_{n_3} -1}) |  \leq |  (k_{n_2} -1) (x_{k_{n_2}}-x_{k_{n_2}-1}  ) | + 2 |   x_{k_{n_3}} -x_{k_{n_2}} | . 
\end{equation*}
As a consequence
\begin{equation}\label{dim1-3}
|  k_{n_3} (x_{k_{n_3}}-x_{k_{n_3} -1}) |  \leq |  k_{n_2}  (x_{k_{n_2}}-x_{k_{n_2}-1}  ) | +  |   x_{k_{n_3}}-x_{k_{n_3} -1} | + 2 |   x_{k_{n_3}} -x_{k_{n_2}} | . 
\end{equation}
Let us now combine  (\ref{dim1-1}), (\ref{dim1-2}), (\ref{dim1-3}).
Since $x_{k_{n_2}} \to b$, $x_{k_{n_3}} \to b$, and $x_k-x_{k-1} \to 0$ , we obtain that during a loop,  when passing from $x_{k_{n_1}} \sim a$ to $x_{k_{n_4}} \sim a$, the expression $|k (x_{k}-x_{k -1} )|$
decreases by a fixed positive quantity. Hence, after a finite number of steps this expression would become negative, a clear contradiction. 
\end{proof}

\section{The perturbed case}\label{S:pert}

\subsection{Continuous dynamics}
We consider  the   equation
\begin{equation}\label{pert}
\ddot{x}(t) + \frac{\alpha}{t} \dot{x}(t) + \nabla \Phi (x(t))  =g(t)
\end{equation}
where, depending on the situation, the second member $g:[t_0 , +\infty [\rightarrow \mathcal H $ is a forcing term, or comes from the  approximation or computational errors in (\ref{edo01}). We suppose that $g$ is locally integrable to ensure existence and uniqueness for the corresponding Cauchy problem. We will show that the results of the previous sections are robust, i.e., there remain satisfied if the perturbation $g$ is not too large asymptotically.

\begin{Theorem}
Let $\mbox{\rm argmin}\Phi\neq\emptyset$, and let $x : [t_0 , +\infty [\rightarrow \mathcal H$ be a solution of (\ref{pert}) with $\alpha > 0 $. Let $p=\min(1,\frac{\alpha}{3})$ and suppose that $\int_{t_0 }^{+\infty}t^{p}g(t)dt<+\infty.$ Then
$$\Phi (x(t))-\min \Phi =   \mathcal O (\frac{1}{t^{2p}}).  $$
\end{Theorem}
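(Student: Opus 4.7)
The strategy is to reuse the Lyapunov function of Theorem \ref{fast} with exactly the same parameter choices, track the single extra term produced by the perturbation $g$, and then close the argument with a Gronwall-type trick.

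First I would fix $z \in \argmin\Phi$, take $p=\min(1,\alpha/3)$, $\lambda(t)=2pt^{p-1}$, $\xi(t)=2(\alpha-4p+1)pt^{2(p-1)}$, and consider the same
\begin{equation*}
\mathcal{E}_{\lambda,\xi}^{p}(t)=t^{2p}\bigl[\Phi(x(t))-\min_{\mathcal H}\Phi\bigr] + \tfrac{1}{2}\Vert \lambda(t)(x(t)-z)+t^{p}\dot x(t)\Vert^{2}+\tfrac{\xi(t)}{2}\Vert x(t)-z\Vert^{2}.
\end{equation*}
When I differentiate along a trajectory of \eqref{pert} instead of \eqref{edo01}, the only change is that $\ddot x(t)$ now carries an extra summand $+g(t)$. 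Tracing this through the chain rule produces one single additional contribution $\langle \lambda(t)(x(t)-z)+t^{p}\dot x(t),\,t^{p}g(t)\rangle$. Since with the above parameter choices the remaining terms are exactly the ones shown to be nonpositive in the proof of Theorem \ref{fast}, I obtain
\begin{equation*}
\tfrac{d}{dt}\mathcal{E}_{\lambda,\xi}^{p}(t) \leq t^{p}\Vert g(t)\Vert \cdot \Vert \lambda(t)(x(t)-z)+t^{p}\dot x(t)\Vert.
\end{equation*}

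Next, since $\xi(\cdot)\geq 0$ and $\Phi(x(t))\geq \min\Phi$, the definition of $\mathcal{E}_{\lambda,\xi}^{p}$ gives the elementary majorization $\Vert \lambda(t)(x(t)-z)+t^{p}\dot x(t)\Vert \leq \sqrt{2\mathcal{E}_{\lambda,\xi}^{p}(t)}$. Substituting and setting $u(t):=\sqrt{\mathcal{E}_{\lambda,\xi}^{p}(t)}$ linearizes the inequality into
\begin{equation*}
\dot u(t) \leq \tfrac{1}{\sqrt 2}\,t^{p}\Vert g(t)\Vert,
\end{equation*}
which, integrated between $t_0$ and $t$, yields $u(t)\leq u(t_0)+\frac{1}{\sqrt 2}\int_{t_0}^{+\infty}s^{p}\Vert g(s)\Vert\,ds$. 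Under the hypothesis $\int_{t_0}^{+\infty}t^{p}\Vert g(t)\Vert\,dt<+\infty$, this proves that $\mathcal{E}_{\lambda,\xi}^{p}$ is bounded uniformly on $[t_0,+\infty[$.

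Finally, the conclusion is immediate: from $\mathcal{E}_{\lambda,\xi}^{p}(t)\geq t^{2p}\bigl[\Phi(x(t))-\min\Phi\bigr]$ we get $\Phi(x(t))-\min\Phi=\mathcal O(t^{-2p})$. The only delicate step is the passage from the differential inequality involving $\sqrt{\mathcal E}$ to an $L^\infty$ bound on $\mathcal E$; everything else is a verbatim copy of the proof of Theorem \ref{fast}. I would also clarify at the outset that the integrability assumption is meant in the norm sense $\int_{t_0}^{+\infty}t^{p}\Vert g(t)\Vert\,dt<+\infty$, which is what the inner product estimate naturally produces.
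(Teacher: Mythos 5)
Your proposal is correct and follows essentially the same route as the paper: the same Lyapunov function $\mathcal{E}_{\lambda,\xi}^{p}$ with $p=\min(1,\frac{\alpha}{3})$, the same observation that the perturbation contributes the single extra term $\langle\lambda(t)(x(t)-z)+t^{p}\dot x(t),\,t^{p}g(t)\rangle$, and the same Gronwall-type absorption of that term under $\int_{t_0}^{+\infty}t^{p}\Vert g(t)\Vert\,dt<+\infty$. The paper runs the Gronwall step in integral rather than differential form --- it appends $\int_{t}^{T}\langle\lambda(s)(x(s)-z)+s^{p}\dot{x}(s),s^{p}g(s)\rangle\,ds$ to the energy so that the modified function is nonincreasing, then applies Cauchy--Schwarz and the quadratic Gronwall Lemma \ref{lem1} of the appendix --- which quietly sidesteps the one technical point your version leaves implicit, namely the differentiability of $u(t)=\sqrt{\mathcal{E}_{\lambda,\xi}^{p}(t)}$ at zeros of $\mathcal{E}_{\lambda,\xi}^{p}$ (fixable by working with $\sqrt{\mathcal{E}_{\lambda,\xi}^{p}(t)+\epsilon}$ or by invoking that lemma directly).
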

\begin{proof}
Take $z\in$ argmin $\Phi$. Let us modify the energy function considering in \eqref{E:basic-Lyap} by introducing an additional term taking account of the perturbation $g$.  Let us first fix  some $T>t_0 $, and define for $t\in [t_0 ,T]$ (without ambiguity we keep the same notations as in the previous sections)
$$\mathcal{E}_{\lambda, \xi}^{p}(t)=t^{2p}\left[ \Phi(x(t))-\min \Phi\right] +\Vert \lambda(t)(x(t)-z)+t^p \dot{x}(t)\Vert^{2}+\frac{\xi(t)}{2}\Vert x(t)-z\Vert^2 +\int_{t}^{T}\langle\lambda(s)(x(s)-z)+s^{p}\dot{x}(s),s^{p}g(s) \rangle ds.$$
By derivation of $\mathcal{E}_{\lambda, \xi}^{p}(\cdot)$,  one obtains
\[
\begin{array}{lll}
\frac{d}{dt}\mathcal{E}_{\lambda, \xi}^{p}(t) &=& 2pt^{2p-1}\left[ \Phi(x(t))-\min \Phi\right] +t^{2p}\langle\nabla\Phi(x(t),\dot{x}(t)\rangle +\langle\lambda(t)(x(t)-z)+t^{p}\dot{x}(t),t^{p}(\ddot{x}(t)-g(t))\rangle\\
&+&\left[ \lambda(t)^2 +pt^{p-1}\lambda(t)+\xi(t)+t^{p}\dot{\lambda}(t)\right] \langle x(t)-z,\dot{x}(t)\rangle
+ \left( \lambda(t)\dot{\lambda}(t)+\frac{\dot{\xi}(t)}{2}\right) \Vert x(t)-z\Vert^2\\
& +& \left( t^p \lambda(t)+pt^{2p-1}\right) \Vert\dot{x}(t)\Vert^2 .
\end{array}
\]
Then note that the term $\ddot{x}(t)-g(t)$ is exactly the same as in the unperturbed case. By a similar argument,  by taking $p=\min(1,\frac{\alpha}{3})$, we obtain  $\frac{d}{dt}\mathcal{E}_{\lambda, \xi}^{p}(t)\leq 0$.

As a consequence, the energy function $\mathcal{E}_{\lambda, \xi}^{p}(\cdot)$ is nonincreasing, which implies, by definition  of $\frac{d}{dt}\mathcal{E}_{\lambda}^{p}(\cdot)$
\begin{equation}\label{pert1}
t^{2p}\left[ \Phi(x(t))-\min \Phi\right] +\Vert \lambda(t)(x(t)-z)+t^p \dot{x}(t)\Vert^{2}\leq C +\int_{t_0 }^{t}\langle\lambda(s)(x(s)-z)+s^{p}\dot{x}(s),s^{p}g(s) \rangle ds.
\end{equation}
Applying Cauchy-Schwarz inequality we deduce that
$$\Vert \lambda(t)(x(t)-z)+t^p \dot{x}(t)\Vert^{2}\leq C +\int_{t_0 }^{t}\Vert\lambda(s)(x(s)-z)+s^{p}\dot{x}(s)\Vert\Vert s^{p}g(s)\Vert ds.$$
Applying Lemma \ref{lem1}, we obtain
$$\Vert \lambda(t)(x(t)-z)+t^p \dot{x}(t)\Vert \leq\sqrt{C}+2\int_{t_0 }^{t}\Vert s^{p}g(s)\Vert ds.$$
By assumption $\int_{t_0 }^{+\infty}t^{p}g(t)dt<+\infty$. Hence 
$$\sup_{t\geq t_0 }\Vert \lambda(t)(x(t)-z)+t^p \dot{x}(t)\Vert \leq +\infty .$$
Injecting this estimate into the equation (\ref{pert1}) leads to
$$t^{2p}\left[ \Phi(x(t))-\min \Phi\right] \leq C +\left[\sqrt{C}+2\int_{t_0 }^{\infty}\Vert s^{p}g(s)\Vert ds\right]\int_{t_0 }^{\infty}\Vert s^{p}g(s)\Vert ds$$
 which immediately gives the claim.
 \end{proof}
 
 \begin{remark}
Because of the numerical importance of the subject, several recent articles have been devoted the study of perturbed versions of the  FISTA algorithm, with various formulation for the perturbations or errors: to cite a few of them Aujol-Dossal \cite{AD}, Schmidt-Le Roux-Bach \cite{SLB}, Solodov-Svaiter \cite{SoSv}, Villa-Salzo-Baldassarres-Verri \cite{VSBV}. 
As far as we know, our results are the first to address this question in  the sub-critical cas $\alpha \leq 3$.
 \end{remark}
\subsection{Algorithms}
Let us consider the perturbed algorithm
$$\mbox{\rm(IFB)}_{\alpha-pert} \quad\left\{ 
\begin{array}{l}
y_k=x_k+(1 - \frac{\alpha}{k})(x_k-x_{k-1})\\
\rule{0pt}{10pt}
x_{k+1}=\prox_{s\Psi}(y_k-s\nabla \Phi(y_k) -sg_k).
\end{array}
\right.$$
In view of the above results, and by adapting the proof of the unperturbed case, one can prove the following result, whose  detailed demonstration is left to the reader.
\begin{Theorem}\label{fast-algo-pert} 
Let us make assumptions $(H)$.
Let $(x_k)$ be a sequence of iterates generated by perturbed algorithm $\mbox{\rm(IFB)}_{\alpha, pert}$.
Suppose that $0 <\alpha <3$. Then, for any $p <\frac{2\alpha}{3}$, and $(g_k)$ such that
 $  \sum  k^p \| g_k \| < + \infty$, we have
$$
 (\Phi + \Psi) (x_k)-\min_{\mathcal H} (\Phi + \Psi) =   \mathcal O (\frac{1}{k^{p}}).
$$
\end{Theorem}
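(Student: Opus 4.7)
The plan is to adapt, step by step, the proof of Theorem \ref{fast-algo} while mirroring the trick used in the continuous perturbed case. First I would introduce the perturbed operator $\tilde G_s(y_k)=\frac{1}{s}(y_k-\prox_{s\Psi}(y_k-s\nabla\Phi(y_k)-sg_k))$, so that the scheme reads $x_{k+1}=y_k-s\tilde G_s(y_k)$. The key preliminary step is to rewrite Lemma \ref{lm.properties_G_s}(i) in perturbed form: using the subdifferential inequality for $\Psi$ at $\prox_{s\Psi}(y_k-s\nabla\Phi(y_k)-sg_k)$ together with the descent lemma for $\Phi$ (valid since $s\leq 1/L$), one obtains, for every $x\in\cH$,
$$\Theta(x_{k+1})\le \Theta(x)+\langle \tilde G_s(y_k),y_k-x\rangle-\frac{s}{2}\|\tilde G_s(y_k)\|^2+\langle g_k,x-x_{k+1}\rangle.$$
This is the only structural modification; the extra inner product $\langle g_k,x-x_{k+1}\rangle$ will propagate as perturbation terms through the subsequent computations.

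Next, I would re-run the proofs of Proposition \ref{pr.energy_decay} (with $x=x_k$) and Proposition \ref{pr.h_k+1-h_k-alpha_k(h_k-h_k-1)} (with $x=z\in S$) using the new descent inequality. This produces inequalities identical to the unperturbed ones, augmented respectively by error terms of the form $\langle g_k,x_k-x_{k+1}\rangle$ and $\langle g_k,z-x_{k+1}\rangle$. Reassembling everything into the Lyapunov function $\mathcal{E}_k$ defined by \eqref{E:basic-Lyap-f} with the same parameters $\lambda_k$, $\xi_k$, and same exponent $p<\min(1,\tfrac{\alpha}{3})$, the telescoping computation of Theorem \ref{fast-algo} now yields, for $k$ large,
$$\mathcal{E}_{k+1}-\mathcal{E}_k \le \frac{p}{s^{1-p}}\bigl[(k+1)^{2p-1}\|x_{k+1}-x_k\|^2-k^{2p-1}\|x_k-x_{k-1}\|^2\bigr]+ R_k,$$
where $R_k$ collects all inner products with $g_k$ and is bounded, via Cauchy--Schwarz, by a constant times $k^p\|g_k\|\cdot\bigl(\|\lambda_k(x_{k-1}-z)+s^{(p-1)/2}k^p(x_k-x_{k-1})\|+\text{lower-order}\bigr)$.

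Summing from $k_0$ to $k$, using the lower bound $\mathcal{E}_k\ge \tfrac12\|\lambda_k(x_{k-1}-z)+s^{(p-1)/2}k^p(x_k-x_{k-1})\|^2$ established in the design of $\mathcal E_k$, setting $u_k:=\sqrt{\mathcal{E}_k-\tfrac{p}{s^{1-p}}k^{2p-1}\|x_k-x_{k-1}\|^2}$, and invoking the discrete Gronwall lemma (the analog of Lemma \ref{lem1} used in the continuous perturbed argument), one obtains $\sup_k u_k<+\infty$ exactly under the assumption $\sum_k k^p\|g_k\|<+\infty$. Reinjecting this bound into the energy inequality yields $\mathcal{E}_k=\mathcal{O}(1)$, which by the lower bound $\mathcal{E}_k\ge s^pk^{2p}(\Theta(x_k)-\min\Theta)$ gives the claimed rate $(\Phi+\Psi)(x_k)-\min(\Phi+\Psi)=\mathcal{O}(1/k^{2p})$, hence $\mathcal{O}(1/k^p)$ for any $p<\tfrac{2\alpha}{3}$ after a rescaling of the exponent as in the unperturbed proof.

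The main obstacle I expect is the bookkeeping in the second step: the inner products involving $g_k$ mix the anchor $x_{k-1}-z$, the discrete velocity $x_k-x_{k-1}$, and the new iterate $x_{k+1}$, and one must factor them carefully as a multiple of the square root of the Lyapunov function in order to apply the discrete Gronwall lemma. The case $p\le\tfrac12$ will require the same two-sided estimate $(k+1)^{2p}-k^{2p}\le 2pk^{2p-1}$ trick as in Theorem \ref{fast-algo}, with the modified choice \eqref{E:basic-ineq-algo2-b} of $\lambda_k$; since the argument is based on asymptotic equivalences, the perturbation terms behave in the same way and the same threshold $p<\tfrac{2\alpha}{3}$ is recovered.
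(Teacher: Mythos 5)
Your proposal is correct and follows essentially the paper's own route: the paper in fact leaves the detailed demonstration of Theorem \ref{fast-algo-pert} to the reader, saying only that it is obtained ``by adapting the proof of the unperturbed case'' in view of the continuous perturbed result, and your plan realizes exactly that adaptation --- a perturbed descent inequality replacing Lemma \ref{lm.properties_G_s}(i), perturbed versions of Propositions \ref{pr.energy_decay} and \ref{pr.h_k+1-h_k-alpha_k(h_k-h_k-1)}, the same Lyapunov sequence \eqref{E:basic-Lyap-f} with the correction term, and the discrete analog of Lemma \ref{lem1}. One minor observation: since the Lyapunov exponent is half the rate exponent, your Gronwall step only needs $\sum_k k^{p/2}\|g_k\|<+\infty$, so the stated hypothesis $\sum_k k^{p}\|g_k\|<+\infty$ closes the argument with room to spare.
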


\section{Appendix}

\begin{lemma}\label{Opial} \mbox{\rm (}\cite{Op} \mbox{\rm )} Let $S$ be a nonempty subset of $\mathcal H$ and let $x:[t_0,+\infty[\to \mathcal H$. Assume that 
\begin{itemize}
\item [(i)] for every $z\in S$, $\lim_{t\to\infty}\|x(t)-z\|$ exists;
\item [(ii)] every weak sequential cluster point of $x(t)$, as $t\to\infty$, belongs to $S$.
\end{itemize}
Then $x(t)$ converges weakly as $t\to\infty$ to a point in $S$.
\end{lemma}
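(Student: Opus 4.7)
The plan is to prove Opial's lemma by the standard two-step argument: first establish that $x(t)$ admits at least one weak sequential cluster point, then show that two such cluster points must coincide. Weak convergence to a single point in $S$ follows.

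First I would observe that hypothesis (i) applied at any fixed $z\in S$ gives $\sup_{t\geq t_0}\|x(t)-z\|<+\infty$, so the trajectory $x(\cdot)$ is bounded in $\mathcal H$. Since $\mathcal H$ is a Hilbert space, closed balls are weakly sequentially compact, so for every sequence $t_n\to+\infty$ there is a subsequence along which $x(t_{n_k})$ converges weakly. In particular the set of weak sequential cluster points of $x(t)$ as $t\to+\infty$ is nonempty, and by (ii) it is contained in $S$.

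Next I would show uniqueness of the cluster point. Let $\bar x_1,\bar x_2$ be two weak sequential cluster points, both belonging to $S$ by (ii). For any $z\in S$, hypothesis (i) says that $t\mapsto \|x(t)-z\|^2$ has a limit as $t\to+\infty$. Applying this with $z=\bar x_1$ and $z=\bar x_2$ and expanding
\[
\|x(t)-\bar x_1\|^2-\|x(t)-\bar x_2\|^2 \;=\; 2\langle x(t),\,\bar x_2-\bar x_1\rangle + \|\bar x_1\|^2-\|\bar x_2\|^2,
\]
I deduce that the real-valued function $t\mapsto \langle x(t),\bar x_2-\bar x_1\rangle$ admits a limit $\ell\in\mathbb R$ as $t\to+\infty$. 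Now evaluate this limit along two sequences: one along which $x(t_{n_k})\rightharpoonup \bar x_1$, which gives $\ell=\langle \bar x_1,\bar x_2-\bar x_1\rangle$; and one along which $x(t'_{m_k})\rightharpoonup \bar x_2$, which gives $\ell=\langle \bar x_2,\bar x_2-\bar x_1\rangle$. Subtracting yields $\|\bar x_2-\bar x_1\|^2=0$, hence $\bar x_1=\bar x_2$.

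Combining the two steps, the bounded trajectory $x(t)$ has exactly one weak sequential cluster point $\bar x\in S$, and a standard argument (if $x(t)\not\rightharpoonup \bar x$ then some weakly continuous linear functional $\varphi$ would satisfy $\varphi(x(t_n))\not\to\varphi(\bar x)$ along some $t_n\to+\infty$, and a further weakly convergent subsequence would produce a second cluster point) concludes that $x(t)\rightharpoonup \bar x$ as $t\to+\infty$. There is no real obstacle here; the only subtle point is the passage from uniqueness of weak sequential cluster points plus boundedness to genuine weak convergence of the whole net $x(t)$, which is the classical argument by contradiction just described.
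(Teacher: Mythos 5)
Your proof is correct and complete; note that the paper itself offers no proof of this lemma, stating it in the appendix with only the citation to Opial's original article, and your argument (boundedness from (i), weak sequential compactness of bounded sets in $\mathcal H$, uniqueness of the weak cluster point via the expansion $\|x(t)-\bar x_1\|^2-\|x(t)-\bar x_2\|^2 = 2\langle x(t),\bar x_2-\bar x_1\rangle + \|\bar x_1\|^2-\|\bar x_2\|^2$, and the final contradiction argument upgrading a unique cluster point to weak convergence of the whole trajectory) is exactly the classical proof that the citation points to. The only microscopic caveat is that (i) by itself gives boundedness of $\|x(t)-z\|$ only for $t$ large rather than $\sup_{t\ge t_0}$ (no continuity of $x$ is assumed in the statement), but since weak sequential cluster points as $t\to\infty$ only involve sequences $t_n\to+\infty$, eventual boundedness suffices and nothing in your argument breaks.
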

\begin{lemma}\label{lem1}
Let $m : [ t_0 ; T] \rightarrow [0;+\infty[ $ be integrable, and let $c > 0$. Suppose $w : [t_0 ; T] \rightarrow \mathbb R $ is continuous and
$$\frac{1}{2}w(t)^2 \leq \frac{1}{2}c^2 +\int_{t_0 }^{t}m(s)w(s)ds $$
for all $t \in [t_0 ; T]$. Then, $ \vert w(t)\vert\leq  c +\int_{t_0 }^{t}m(s)ds$ for all $t \in [t_0 ; T].$
\end{lemma}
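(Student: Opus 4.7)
The plan is to follow the classical Gronwall-Bellman trick: introduce the running majorant
\[
\varphi(t) \;=\; \tfrac{1}{2}c^2 + \int_{t_0}^{t} m(s)\,w(s)\,ds,
\]
so that the hypothesis reads $\tfrac{1}{2}w(t)^2 \leq \varphi(t)$, and hence $|w(t)| \leq \sqrt{2\varphi(t)}$. Since $m$ is integrable and $w$ is continuous, $\varphi$ is absolutely continuous with $\varphi'(t) = m(t)w(t)$ almost everywhere. The heuristic is then to write $\frac{d}{dt}\sqrt{2\varphi(t)} = \varphi'(t)/\sqrt{2\varphi(t)} \leq m(t)|w(t)|/\sqrt{2\varphi(t)} \leq m(t)$, and to integrate this from $t_0$ to $t$ to obtain $\sqrt{2\varphi(t)} \leq c + \int_{t_0}^{t} m(s)\,ds$, which combined with $|w(t)| \leq \sqrt{2\varphi(t)}$ yields the conclusion.

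To make this rigorous, the main obstacle is that $\varphi$ could vanish (for instance if $c$ were zero, or if $w$ changes sign in such a way that the integral cancels $\tfrac{1}{2}c^2$), in which case dividing by $\sqrt{2\varphi(t)}$ is not legitimate. The standard remedy is an $\varepsilon$-regularization: for each $\varepsilon>0$, set $\varphi_{\varepsilon}(t) := \varphi(t) + \varepsilon$, which is strictly positive and still absolutely continuous with $\varphi_{\varepsilon}'(t) = m(t)w(t)$ a.e. Then $\sqrt{2\varphi_{\varepsilon}(\cdot)}$ is absolutely continuous (composition of the smooth function $u \mapsto \sqrt{2u}$, Lipschitz on $[\varepsilon,\infty)$, with an absolutely continuous function), and one has a.e.
\[
\frac{d}{dt}\sqrt{2\varphi_{\varepsilon}(t)} \;=\; \frac{m(t)w(t)}{\sqrt{2\varphi_{\varepsilon}(t)}} \;\leq\; \frac{m(t)\sqrt{2\varphi(t)}}{\sqrt{2\varphi_{\varepsilon}(t)}} \;\leq\; m(t),
\]
using $|w(t)| \leq \sqrt{2\varphi(t)} \leq \sqrt{2\varphi_{\varepsilon}(t)}$ and the nonnegativity of $m$.

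Integrating this differential inequality from $t_0$ to $t$ (which is valid because both sides belong to $L^1$ and the left side is the derivative of an absolutely continuous function) gives
\[
\sqrt{2\varphi_{\varepsilon}(t)} \;\leq\; \sqrt{2\varphi_{\varepsilon}(t_0)} + \int_{t_0}^{t} m(s)\,ds \;=\; \sqrt{c^2 + 2\varepsilon} + \int_{t_0}^{t} m(s)\,ds.
\]
Since $|w(t)| \leq \sqrt{2\varphi(t)} \leq \sqrt{2\varphi_{\varepsilon}(t)}$ for every $\varepsilon>0$, letting $\varepsilon \downarrow 0$ yields
\[
|w(t)| \;\leq\; c + \int_{t_0}^{t} m(s)\,ds,
\]
which is the desired inequality. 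The only nontrivial analytic point is the chain rule for the absolutely continuous composition, which is standard once $\varphi_{\varepsilon}$ is bounded away from zero; the rest is bookkeeping.
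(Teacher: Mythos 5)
Your proof is correct, and there is in fact nothing in the paper to compare it against: Lemma \ref{lem1} appears in the appendix \emph{without proof}, being the classical quadratic Gronwall lemma (essentially Lemma A.5 of Br\'ezis \cite{Bre1}, which is in the paper's bibliography). Your argument is the standard one and is complete: the hypothesis itself gives $\varphi(t)\geq \frac{1}{2}w(t)^2\geq 0$, so the bound $|w(t)|\leq\sqrt{2\varphi(t)}$ is legitimate; the absolute continuity of $\varphi$ follows since $w$ is bounded on the compact interval and $m\in L^1$, so $mw\in L^1$; and you correctly identify the one delicate point, namely that $\varphi$ may vanish (the integral term can be negative where $w<0$, and a comparison with $\dot y=-m\sqrt{2y}$ shows $\varphi$ can genuinely reach zero), which your $\varepsilon$-regularization handles rigorously, since $u\mapsto\sqrt{2u}$ is Lipschitz on $[\varepsilon,\infty)$ and the fundamental theorem of calculus applies to the absolutely continuous composition. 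The limit $\varepsilon\downarrow 0$ then gives exactly the stated bound. An equivalent classical variant, for comparison, avoids regularization by working on the open set $\{t:\varphi(t)>0\}$ and arguing on its connected components, but your route is cleaner and is the one found in Br\'ezis.
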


\end{document}